\newtheorem{theo}{Theorem}
\newtheorem{coro}{Corollary}[section]
\newtheorem{lemm}[coro]{Lemma}
\newtheorem{prop}[coro]{Proposition}
\newtheorem{rema}[coro]{Remark}
\newtheorem{defi}[coro]{Definition}
\newtheorem{ques}{Question}
\newtheorem{clai}{Claim}
   \def\DD{{\mathbb D}}
 \def\RR{{\mathbb R}} \def\SS{{\mathbb S}} \def\TT{{\mathbb T}}
 \def\ZZ{{\mathbb Z}}
\def\La{\Lambda}
\def\De{\Delta}
\def\Ga{\Gamma}
\def\cA{{\cal A}}    
  \def\cH{{\cal H}}  
\def\cC{{\cal C}}   \def\cO{{\cal O}} \def\cU{{\cal U}}
\def\cD{{\cal D}}   \def\cP{{\cal P}} 
\def\cF{{\cal F}}   \def\cR{{\cal R}}
\title{Volume hyperbolicity and wildness}
\author{Christian Bonatti and Katsutoshi Shinohara}
\begin{document}

\maketitle

\begin{abstract}
It is known that \emph{volume hyperbolicity} (partial hyperbolicity and
 uniform expansion or contraction of the volume in the extremal bundles) is a necessary condition for 
 robust transitivity or robust chain recurrence hence for tameness. 
In this paper,  on any $3$-manifold we build examples of quasi-attractors
which are volume hyperbolic and wild at the same time.
As a main corollary, we see that, for any closed $3$-manifold $M$, 
the space  $\mathrm{Diff}^1(M)$ admits a non-empty
open set where every $C^1$-generic diffeomorphism 
has no attractors or repellers.
 
The main tool of our construction is the notion of
flexible periodic points introduced in \cite{BS}. 
For ejecting the flexible points from the quasi-attractor, we control
the topology of the quasi-attractor using the notion of  
filtrating Markov partition, which we introduce in this paper\footnote{This paper has been supported by the ANR project DynNonHyp, 
the project Ci\^encia Sem Fronteiras CAPES (Brazil) and
the JSPS KAKENHI Grant-in-Aid for JSPS Fellows (26$\cdot$1121).
We thank the warm hospitality of IMB (Institut de Math\'ematiques de Bourgogne), Dijon, France 
and Departamento de Matem\'{a}tica of PUC-Rio, 
Rio de Janeiro, Brazil.}.
\end{abstract}

{\footnotesize{2010 \emph{Mathematics Subject Classification}:  37C05; 37C15, 37C70, 37D45.}

\emph{Key words and phrases}: Wild dynamics, partial hyperbolicity,
volume hyperbolicity, quasi-attractor.}


\section{Introduction}
\subsection{Backgrounds}

Hyperbolic dynamical systems are those which are nowadays
considered as well understood  (see \cite{BDV} for further information).  
However, they are far to represent 
the majority of dynamical systems: 
it is known from the late sixties that there are open sets in $\mathrm{Diff}^r(M)$ 
of non-uniformly hyperbolic systems. 
These systems are each unstable. Hence
it seems improbable that we obtain 
a complete topological classification of them. 

Nevertheless, some recent works propose a panorama of 
non-hyperbolic dynamical systems (see \cite{B,CP}). 
They attempt to characterize the robust non-hyperbolicity 
by local phenomena 
(for instance robust heterodimensional cycle or/and 
robust homoclinic tangencies) and 
weak hyperbolic structures (such as partial 
hyperbolicity or dominated splittings). 

As a first step of this global program,
there is an important division of the space of dynamical systems
into two classes of completely different nature:

\begin{itemize}
\item We say that a diffeomorphism is \emph{tame} 
if each chain recurrence class is $C^1$-robustly isolated.  
This implies that there are finitely many chain recurrence classes and that the number of classes 
does not vary under perturbations of the system.
 \item We say that a diffeomorphism is \emph{wild} if it admits a neighborhood in 
 $\mathrm{Diff}^1(M)$ 
in which $C^1$-generic systems have infinitely many chain recurrence classes.
\end{itemize}
The result in \cite{BC} shows that the set of tame diffeomorphisms and the set of wild diffeomorphisms are 
disjoint $C^1$-open sets whose union is dense in $\mathrm{Diff}^1(M)$ (see \cite{A} for a preliminary version, 
which only holds for $C^1$-generic diffeomorphism).  
 
While the uniform hyperbolicity is a sufficient condition for tameness,
there are examples of non-hyperbolic tame diffeomorphisms \cite{S,M,BD1,BV}. 
We also have examples of wild systems \cite{BD2,BD3,BLY}, which are necessarily non-hyperbolic. 
However, this division is far to be understood:
\begin{itemize}
 \item Every chain recurrence class (which is neither a sink nor a source) 
 of tame systems has a structure called {\it volume hyperbolicity}: 
 it consists of a finest dominated splitting $E_1\oplus_< E_2\oplus_<\cdots \oplus_<E_k$ whose extremal bundles $E_1$ and $E_k$ are volume contracting and expanding, respectively. 
See \cite{BDP, B}.
 \item All the known examples of wild diffeomorphisms use lack of dominated splittings which allow us to produce sinks or sources 
 (possibly after restriction of the dynamics 
 on a normally hyperbolic invariant submanifold). 
\end{itemize}

The aim of this paper is to approach the frontier between 
tame and wild dynamics,
by producing examples of wild diffeomorphisms 
under a setting of volume hyperbolicity, 
using a completely different method from the existing ones.
  
  \smallskip
  
We consider diffeomorphisms on $3$-manifolds which have a  
chain recurrence class $C(p)$ 
(where $p$ is a hyperbolic saddle of $s$-index $2$, i.e., $\dim(W^s(p))=2$)
admitting a partially hyperbolic splitting $E^{cs}\oplus_< E^u$, 
where $E^u$ is uniformly expanding and $E^{cs}$ is uniformly area contracting
(for the precise definitions of chain recurrence classes and partially hyperbolic 
splittings, see Section~\ref{ss.partialhyp}).  
We assume that $C(p)$ is not contained in a 
normally hyperbolic invariant surface 
(otherwise $p$ is an attracting 
point in restriction to this surface, and 
$C(p)$ is trivial). 
Note that, according to \cite{BC2}, this is 
equivalent to that there is
a periodic point $q$ in $C(p)$ for which the strong unstable manifold of 
$q$ meets $C(p)$ at a point $q'\neq q$.
This is the setting for the examples of robustly transitive diffeomorphisms in \cite{BV}
and of the robustly transitive attractor of \cite{Ca}. 
In this setting, we will nevertheless build examples of wild diffeomorphisms.

Following the definition of wild homoclinic classes in $\cite{B}$, we say that 
a chain recurrence class $C(p)$ of a diffeomorphism $f$ is ($C^1$-){\it wild} 
if it is locally generically non-isolated, 
that is, there exists a neighborhood $\mathcal{U} \subset \mathrm{Diff}^1(M)$ of $f$ 
and a residual set $\mathcal{R} \subset \mathcal{U}$
such that for every $g \in \mathcal{R}$ the continuation $C(p_g)$ is not isolated 
(every neighborhood of $C(p_g)$ has non-empty intersection with 
a chain recurrence class which is not equal to $C(p_g)$).
Note that having a wild chain recurrence class is  a sufficient condition for the wildness of a diffeomorphism. 

We will construct a diffeomorphism having a wild chain recurrence class under the above setting
with additional conditions.
Note that the volume hyperbolicity implies that there are 
neither sinks nor sources in 
a sufficiently small neighborhood of $C(p)$. 
For some of our examples, the class $C(p)$ will be the unique 
quasi-attractor in an attracting neighborhood, 
and its basin will cover a residual subset of this attracting neighborhood.

The main technique of our construction is the 
notion of {\it $\varepsilon$-flexible points} defined in \cite{BS}.
It is an $s$-index $2$ 
periodic point $x$ admitting an $\varepsilon$-perturbation 
(with respect to a fixed $C^1$-distance)
which changes $x$ into an $s$-index $1$ periodic orbit whose stable manifold 
being an arbitrarily chosen curve in the center-stable plane.  
In \cite{BS}, we also showed that the existence of flexible 
points with arbitrarily small $\varepsilon$
is a prevalent phenomenon under the presence of a robustly 
non-hyperbolic $2$-dimensional center-stable bundle 
with no dominated splitting. 
Therefore, these flexible points seem to be the good candidate 
for being ejected from the original chain recurrence class $C(p)$.
We only need to choose a curve out of the class $C(p)$ and give a
perturbation which turns the stable manifold of the periodic point
into the chosen one. 
Once we have the disjointness of 
a fundamental domain of the stable manifold 
from $C(p)$, then we can see that the orbit of 
$x$ is indeed out of $C(p)$. 

This simple argument transforms the question of knowing 
if $C(p)$ is tame or wild into an almost purely topological problem.
The question is: \emph{
in the local center-stable manifold passing through $x$,
does there exist a path
starting from $x$ which is disjoint from $C(p)$?} 
In the examples of tame dynamics in \cite{BV, Ca}, the class is either the whole manifold
or a Sierpinski carpet in the center-stable direction. These examples have flexible points. 
Meanwhile, 
the flexibility is not enough to ejecting the point, 
because there is no path going out of the class from the point. 

Here we give a setting in which the intersection of the 
chain recurrence class with the center stable 
manifold passing through the flexible point will be contained in a finite 
union of disjoint discs. This condition allows us to choose the 
stable manifold of this flexible point so that it avoids these discs. 
In such a situation, the flexibility of
the periodic point enables us to eject it
from the original class. 

Now, we state our result in more formal way. 

\subsection{Statement of the result}

We consider a diffeomorphism $f$ of a $3$-manifold.
We will define precisely in the next section the notions of 
\emph{partially hyperbolic filtrating Markov partition} 
of \emph{saddle type} (Definition~\ref{d.saddle}) or of \emph{attracting type} 
(Definition~\ref{d.attracting}).  Let us give a rough description of it. 

It consists of the following:
\begin{itemize}
 \item 
a Markov partition $\{R_i\}_{i=1,\dots,k}$ whose \emph{rectangles} 
$R_i$ are cylinders (we say that a subset of a 3-manifold is a \emph{cylinder}
if it is $C^1$-diffeomorphic to $\DD^2\times [0,1]$,
where $\DD^2$ is the unit disc in $\mathbb{R}^2$);
\item the union of the rectangles 
${\bf R}=\bigcup_{ i=1}^{k} R_i$ is a \emph{filtrating set}, that is, 
an intersection of an attracting region $A$  
and a repelling region $R$ 
(in the attracting type, the repelling region $R$ is the whole manifold, 
in other words, the union of the 
rectangles is an attracting region); 
\item the \emph{vertical boundary} of each rectangle 
(i.e.,  the side of the cylinder $(\partial \DD^2)\times [0,1]=\SS^1\times [0,1]$) 
is contained in the boundary of the attracting region $\partial A$;
\item for the saddle type filtrating Markov partition 
we require that the \emph{horizontal boundaries} 
$\DD^2\times \partial [0,1]$
are contained in the boundary of the repelling region $\partial R$;
\item there is a partially hyperbolic structure $E^{cs}\oplus_< E^u$ defined
on the union of the rectangles: 
$Df$ (the differential map of $f$) leaves invariant a continuous \emph{unstable cone field} $\cC^u$
 defined on the union of the rectangles,
 transverse to the horizontal discs $\DD^2\times \{t\}$ and containing the
 vertical lines $\{p\}\times [0,1]$. 
\end{itemize}

We say that a hyperbolic periodic 
point $x\in  R_i$  with  \emph{$s$-index} $2$ 
has \emph{large
stable manifold} if there is a local stable manifold of $x$ which is a disc contained in $R_i$ 
(necessarily transverse 
to the unstable cone field) whose boundary is contained in the vertical 
boundary of $R_i$.  
Finally, recall that two hyperbolic periodic points 
$p$ and $q$ are said to be
{\it homoclinically related} if $W^u(p)$ and $W^s(q)$ have 
non-empty transverse intersection and the same holds for
$W^u(q)$ and $W^s(p)$.

We are now ready to state our main result.
\begin{theo}\label{t.wild} 
Let $f$ be a diffeomorphism of a $3$-manifold, 
having a partially hyperbolic filtrating 
Markov partition ${\bf R}=\bigcup_{ i=1}^{k} R_i$ of either saddle type or attracting type.  
Assume that there is a periodic point $p\in{\bf R}$ 
and the following holds:
\begin{itemize}
 \item $p$ is an $s$-index $2$ hyperbolic periodic point with 
large stable manifold;
\item there is an $s$-index $2$ hyperbolic periodic point $p_1$ homoclinically related with $p$
having a complex (non-real) stable eigenvalue;
\item there is a periodic point $q$ of $s$-index $1$ which 
is $C^1$-robustly in the chain recurrence class $C(p,f)$ of $p$: 
for every $g$ which is sufficiently $C^1$-close to $f$ one has 
$$C(p_g,g)=C(q_g,g),$$ 
where $p_g$ and $q_g$ are the continuation of $p$ and $q$ for $g$.  
\end{itemize}
Then, there are a $C^1$-diffeomorphism $g$ arbitrarily 
$C^1$-close to $f$, 
hyperbolic periodic points $x_g$ and $y_g$ (of $g$)
of $s$-index $1$ and $2$ respectively,
such that the chain recurrence classes $C(x_g,g)$ and $C(y_g,g)$ are trivial, that is, 
they are equal to the orbits of $x_g$ and $y_g$ respectively. 
Furthermore, we can require that the Hausdorff distance between 
the orbit of $x_g$ (resp. $y_g$) and the chain recurrence class of $C(p, f)$ is arbitrarily small. 
\end{theo}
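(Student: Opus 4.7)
The plan is to convert the problem into a topological one by combining the flexibility technique of \cite{BS} with the geometric control provided by the filtrating Markov partition. In broad strokes, one (i) produces a hyperbolic $s$-index $2$ point $x\in C(p,f)$ that is $\varepsilon$-flexible for arbitrarily small $\varepsilon$, (ii) uses the Markov-partition structure to exhibit a simple arc in the local centre-stable disc through $x$ that leaves the slice $C(p,f)\cap W^{cs}_{loc}(x)$ immediately, and (iii) performs the flexibility perturbation along this arc so that the resulting periodic orbit is ejected from the chain recurrence class of $p_g$. For (i), the homoclinic relation between $p_1$ and $p$, together with the non-real stable eigenvalue at $p_1$, yields after arbitrarily small $C^1$-perturbations periodic points inside $C(p,f)$ whose derivative along $E^{cs}$ is a rotation-like matrix admitting no dominated splitting. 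The prevalence theorem of \cite{BS} then furnishes a periodic saddle $x$ homoclinically related to $p$ which is $\varepsilon$-flexible for arbitrarily small $\varepsilon>0$; the $\lambda$-lemma lets one further arrange that $x$ has a large local stable disc contained in some rectangle $R_i$ of the Markov partition.

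The main topological input is to analyse the trace $C(p,f)\cap D$ inside the two-dimensional disc $D:=W^{cs}_{loc}(x)\cap R_i$. Since each rectangle is a cylinder $\DD^2\times[0,1]$ with vertical boundary contained in $\partial A$ and, in the saddle case, horizontal boundary in $\partial R$, the unstable returns to $R_i$ are controlled enough to show that the centre-stable slice of the class is contained in a finite disjoint union of topological sub-discs, rather than in a Sierpinski-type carpet as in the examples of \cite{BV,Ca}. Consequently the complement of these sub-discs in $D$ is path-connected and reaches the vertical boundary; one may therefore draw a simple arc $\gamma\subset D$ from $x$ to $\partial D\cap\partial A$ meeting $C(p,f)$ only at $x$.

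Feeding $\gamma$ into the flexibility lemma of \cite{BS} produces a diffeomorphism $g$ arbitrarily $C^1$-close to $f$, differing from $f$ only on a small neighbourhood of the orbit of $x$, and turning $x$ into an $s$-index $1$ hyperbolic periodic point $x_g$ whose one-dimensional local stable manifold is $C^1$-close to $\gamma$. Because $\gamma\setminus\{x\}$ is disjoint from $C(p,f)$ and exits $R_i$ through $\partial A$, a fundamental domain of $W^s(x_g)$ is disjoint from $C(p_g,g)=C(q_g,g)$; by the standard chain-recurrence argument this forces $C(x_g,g)$ to be disjoint from $C(p_g,g)$, and in fact reduces it to the orbit of $x_g$ itself, since every chain-loop based at $x_g$ must traverse $W^s(x_g)$. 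To simultaneously obtain $y_g$ of $s$-index $2$ with trivial class, one applies the same scheme to a second flexible saddle $x''$ sitting in $C(p_g,g)$ and disjoint from the support of the first perturbation; this time the flexibility parameter is chosen on the $s$-index $2$ side of the local spectral bifurcation, and the emerging dominant stable direction is oriented so that the $1$-dimensional unstable curve $W^u(y_g)$, tangent to $E^u$, exits the attracting region along $\partial A$, thereby trivialising $C(y_g,g)$. The Hausdorff-distance statement is then automatic because both perturbations are supported in arbitrarily small neighbourhoods of orbits already contained in $C(p,f)$.

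The most delicate step is the topological analysis of the preceding paragraph: extracting from the filtrating Markov data that $C(p,f)\cap D$ has at most finitely many connected components, each compactly embedded in the interior of $D$. This requires a combinatorial description of the first unstable returns to $R_i$ in which the filtrating property (horizontal boundaries in $\partial R$, vertical boundaries in $\partial A$) plays a crucial role to show that the slice of the class does not fill $D$ densely. A second, related technical point is to verify that the flexibility perturbation can be arranged so that the full unstable manifold of the ejected orbit also fails to re-enter $C(p_g,g)$; this relies on the $\lambda$-lemma, the invariance of the unstable cone field, and the filtrating nature of the Markov region.
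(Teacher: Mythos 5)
Your overall plan matches the paper's: produce an $\varepsilon$-flexible $s$-index $2$ saddle with large stable manifold inside $C(p)$, use the Markov structure to find a target curve that misses the ``bad'' part of the centre-stable disc, and apply the flexibility perturbation of \cite{BS} to eject the orbit. Steps (i) and (iii) are essentially the paper's Lemma on existence of flexible points with large stable manifolds and the kicking Proposition. However, there are two genuine gaps.

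\textbf{The disjointness condition is too weak.} You draw an arc $\gamma$ in the centre-stable disc $D$ that meets $C(p,f)$ only at $x$, and then argue that a fundamental domain of $W^s(x_g)$ being disjoint from $C(p_g,g)$ forces $C(x_g,g)$ to be trivial. This inference does not hold: mere disjointness of a fundamental domain from a given chain class does not exclude nontrivial pseudo-orbit loops through $x_g$. The paper's argument (Proposition~\ref{p.obstacle}) instead requires the stable separatrices of the ejected point to avoid the projection $\Delta_D$ of $\bigl(D\cap f^{\pi(D)}({\bf R})\bigr)\setminus f^{\pi(D)}(D)$ to the orbit torus $\TT_D$, which is the finite union of discs furnished by Lemma~\ref{l.obstacle}. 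This stronger disjointness from the full Markov return (not just from $C(p,f)\cap D$, which is a proper, potentially much smaller subset of $D\cap f^{\pi(D)}({\bf R})$) is precisely what lets one prove that a fundamental domain of $W^s(x_g)$ eventually escapes the attracting region $A$, and only from there does triviality of $C(x_g,g)$ follow. Your formulation of the topological input as finiteness of the components of $C(p,f)\cap D$ is therefore the wrong target set; you need the Markov rectangles $f^{\pi(D)}({\bf R})\cap D$.

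\textbf{The construction of $y_g$ is flawed.} You propose a second flexible saddle with the perturbation chosen ``on the $s$-index $2$ side'' so that $W^u(y_g)$ exits the attracting region along $\partial A$. But in a partially hyperbolic filtrating Markov partition the unstable manifold is tangent to the vertical cone field, hence a leaf of the forward-invariant unstable lamination $\Lambda_+$; it cannot cross the side boundary $\partial_s R_i\subset\partial A$ (it can only terminate on the lid boundary $\partial_l R_i$, in the saddle case). Consequently this escape argument does not work. The paper avoids it entirely: after the first flexibility perturbation, $x$ becomes an $s$-index $1$ saddle with trivial class \emph{and unstable eigenvalue arbitrarily close to $1$}, which admits a filtrating neighbourhood on which a subsequent $C^1$-small perturbation unfolds a saddle-node, producing in the same neighbourhood an $s$-index $2$ saddle $y_g$ with trivial class (Corollary~\ref{c.kicking}). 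This is both simpler and correct.

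Two smaller remarks. First, the flexibility statement of \cite{BS} is phrased as prescribing a pair of simple closed curves in $\TT_D$ isotopic to the meridian, not an arc from $x$ to $\partial D$ in $D$; your arc should be replaced by such a pair of curves disjoint from $\Delta_D$. Second, your $\lambda$-lemma argument for large stable manifolds of the flexible points is in the right spirit, but the paper's Lemma~\ref{l.large2} (continuity of local stable manifolds over a hyperbolic basic set plus the invariance of largeness along orbits) provides a cleaner justification.
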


In all the known examples of wild systems, as in \cite{BD1}, the process of creating 
a new class out of a given class consists of producing an attracting or a 
repelling region, for instance by changing the derivative of a periodic orbit, using Franks' Lemma.
In our context, the partially hyperbolic structure prevents 
the existence of attracting regions in ${\bf R}$: 
The unstable manifold of 
every periodic point in ${\bf R}=\bigcup R_i$ 
cuts transversely the stable manifold of our 
starting point $p$ (which has large stable manifold). 
If furthermore the  diffeomorphism is volume hyperbolic over $\bf R$ 
(i.e., contracts uniformly the area in $E^{cs}$), 
it prohibits the existence of repelling periodic orbits. 

Indeed, such examples do exist:
we also show that  there are examples of chain recurrence classes 
which are volume hyperbolic and simultaneously satisfy the hypothesis of the Theorem~\ref{t.wild}:
\begin{prop}\label{p.exa}
Given a compact 3-manifold $M$, there are non-empty $C^1$-open sets 
$\cU_{_{\mbox{\rm \tiny sdl}}}$, $\cU_{_{\mbox{\rm \tiny att}}}\subset\mathrm{Diff}^1(M)$
such that every $f \in\cU_{_{\mbox{\rm \tiny sdl}}}$  
(resp. $f \in\cU_{_{ \mbox{\rm \tiny att}}}$) admits 
a partially hyperbolic filtrating 
Markov partition ${\bf R}=\bigcup_{ i=1}^{k} R_i$ of saddle type  
(resp. of attracting type), periodic points 
$p, p_1$ and $q$ which satisfy the hypothesis 
of Theorem~\ref{t.wild} such that the partially hyperbolic splitting 
$E^{cs}\oplus E^u$ over the maximal invariant set in $\bf R$ 
(i.e., $\cap_{n\in \mathbb{Z}} f^n({\bf R}) $) is volume hyperbolic.
\end{prop}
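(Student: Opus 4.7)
The plan is to build both examples inside small embedded solid bodies in $M$ and extend the diffeomorphism arbitrarily outside. Since any closed $3$-manifold contains both an embedded cylinder $\DD^2\times[0,1]$ and an embedded solid torus $\DD^2\times\SS^1$, the proposition reduces to producing the two local models; the $C^1$-openness of $\cU_{_{\mbox{\rm\tiny sdl}}}$ and $\cU_{_{\mbox{\rm\tiny att}}}$ will follow from the $C^1$-robustness of every ingredient: the cylindrical Markov structure with boundaries in $\partial A$ (and $\partial R$), the partial and volume hyperbolicity, the large-stable-manifold property of $p$, the homoclinic relation between $p$ and $p_1$, the complex stable eigenvalue of $p_1$, and the robust chain-equivalence of $p$ and $q$.

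For the saddle type, I would start from an affine horseshoe-like model on $\DD^2\times[0,1]$ with two cylindrical rectangles $R_1=\DD^2\times[0,1/3]$ and $R_2=\DD^2\times[2/3,1]$: define $f$ so that each restriction $f|R_i$ is an affine skew-product, expanding in the $[0,1]$ direction and contracting areas on $\DD^2$, with $f(R_1)\cup f(R_2)$ crossing $R_1\cup R_2$ vertically. Then $\partial\DD^2\times[0,1]$ sits in an attracting region, $\DD^2\times\{0,1\}$ sits in a repelling region, and the maximal invariant set is a uniformly hyperbolic horseshoe, automatically volume hyperbolic. In the interior of $R_1$ perform a local Bonatti--Viana modification, as in \cite{BV}, near a periodic orbit to destroy the domination inside $E^{cs}$ and produce an $s$-index $2$ saddle $p_1$ with complex non-real stable eigenvalues. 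In the interior of $R_2$ insert a blender-horseshoe, as in \cite{BD1}, to create a robust heterodimensional cycle and an $s$-index $1$ saddle $q$ that sits $C^1$-robustly in the chain recurrence class of $p_1$. Finally choose $p$ to be any $s$-index $2$ periodic point homoclinically related to $p_1$ whose local stable disc crosses a rectangle from one component of its vertical boundary to the other; such points are abundant in the horseshoe and persist under the modification.

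For the attracting type, I would replace $\DD^2\times[0,1]$ by $\DD^2\times\SS^1$ and take the base model to be a Carvalho-type robustly transitive attractor \cite{Ca}. Choose a Markov partition of $\SS^1$ into arcs $[a_i,b_i]$ so that the rectangles $R_i=\DD^2\times[a_i,b_i]$ form a filtrating Markov partition of attracting type, with $\partial A=\partial\DD^2\times\SS^1$ and the repelling region equal to the whole manifold. Then perform the Bonatti--Viana modification and insert the blender-horseshoe in the interior of distinct rectangles as in the saddle case. In both settings, keeping every perturbation $C^1$-small preserves the uniform area-contraction in $E^{cs}$ and the uniform expansion in $E^u$, so the partially hyperbolic splitting on the maximal invariant set remains volume hyperbolic. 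The open sets $\cU_{_{\mbox{\rm\tiny sdl}}}$ and $\cU_{_{\mbox{\rm\tiny att}}}$ are then taken to be $C^1$-neighborhoods of these model diffeomorphisms.

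The main obstacle is the simultaneous compatibility of three local ingredients inside the Markov partition: the cylindrical geometry with the prescribed attracting and repelling boundaries, the Bonatti--Viana modification producing $p_1$ with complex stable eigenvalue while destroying the domination in $E^{cs}$, and the blender ensuring an $s$-index $1$ point robustly in $C(p)$. Each of these is classical in isolation; the delicate point is to localize every perturbation in disjoint small boxes strictly inside the interior of the rectangles so that the Markovian cylindrical structure, the boundary conditions, and the global volume-hyperbolic splitting all survive unchanged.
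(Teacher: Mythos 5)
Your overall strategy (start from a hyperbolic model with cylindrical Markov structure, then perturb near periodic orbits to produce the complex stable eigenvalue and the heterodimensional cycle, and extend to any $3$-manifold) matches the paper's skeleton: the paper first builds a mixing hyperbolic filtrating Markov partition from a horseshoe on $S^2\times[-1,1]$ (saddle type) or from a Smale solenoid (attracting type), then modifies it to create $p_1$ with complex eigenvalues and $q$ with an index-one heterodimensional cycle, and finally invokes the stabilization theorem of \cite{BDK} to get $q$ robustly in $C(p)$.

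However, the proposal has a genuine gap in the single sentence doing the real work: ``keeping every perturbation $C^1$-small preserves the uniform area-contraction in $E^{cs}$ and the uniform expansion in $E^u$, so the partially hyperbolic splitting $\ldots$ remains volume hyperbolic.'' The perturbations you need are \emph{not} $C^1$-small. Turning two real stable eigenvalues of $p_1$ into a complex pair, or changing the $s$-index of $q$ from $2$ to $1$, necessarily requires a bounded-away-from-zero (indeed $C^1$-large) change in the derivative along the orbit; this is precisely Remark~\ref{r.c-zero} in the paper, which says the modification is $C^1$-large in general but can be made $C^0$-small. Because the perturbation is $C^1$-large, the preservation of the invariant unstable cone field and of the area contraction in $E^{cs}$ does not follow from any soft robustness statement, and ``localizing the perturbation in small disjoint boxes'' does not help since the obstruction is the size of the derivative, not its support. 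This is exactly what the paper's Lemmas~\ref{l.chi}, \ref{l.modif}, \ref{l.family} and the normal-bundle argument in Claims~\ref{p.volume} and \ref{c.proje} are for: they construct the large perturbation as a composition of a parameter-dependent planar modification (spread out along the $z$-coordinate by a bump function), an extra vertical stretch $\chi_{K,\delta,\alpha}$, and a rescaling, so that the resulting diffeomorphism still maps a fixed vertical cone strictly inside a wider one, expands vectors in it by a factor close to $1$, and keeps $|J_{xy}\hat\Gamma - 1|$ small; volume hyperbolicity then survives because the determinant of $Df$ on $E^{cs}$ is tracked via a fixed transverse distribution $\cP$ and projections along the (almost unchanged) unstable direction. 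Without this quantitative cone and Jacobian control your proposal does not establish that the new system is volume hyperbolic, which is the content of the proposition.

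Two smaller points. First, for the attracting case the paper starts from the \emph{hyperbolic} solenoid and then perturbs; starting from a Carvalho attractor as you suggest is not cleaner, since it is already non-hyperbolic and you would have to verify the cylindrical Markov structure and the large stable manifold for it separately. Second, for the attracting type the naive choice of rectangles $\DD^2\times[a_i,b_i]$ does not satisfy the adjacency condition in Definition~\ref{d.attracting} (each non-trivial overlap must have one lid contained in the interior of the other); the paper handles this by shrinking alternating pieces slightly in the $\DD^2$-direction, a detail your proposal omits.
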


Theorem~\ref{t.wild} announces the existence of 
an arbitrarily small perturbation of 
the initial diffeomorphism producing 
a saddle point with trivial chain recurrence 
class near the class of $p$. 
As we will see later, the hypothesis of Theorem~\ref{t.wild}
is $C^1$-open condition. The openness is clear except the 
persistence of partially hyperbolic filtrating Markov partitions,
which will be discussed in Section~\ref{s.statement}.
Thus, the production of new class can be done 
keeping the hypothesis.
Then, by repeating it infinitely many times, we have the following:

\begin{coro}\label{c.isolated} Under the hypothesis of Theorem~\ref{t.wild},  
there are a $C^1$-neighborhood $\cU$ of $f$ and a residual subset $\cR$ of $\cU$ such that, 
for every $g\in \cR$ the class $C(p_g,g)$ is the Hausdorff limits of two sequences of periodic orbits 
$\cO(x_i)$ and $\cO(y_i)$ of $s$-index $2$ and $1$ respectively and having trivial classes, that is,
$$C(x_i,g)=\cO(x_i) \mbox{ and } C(y_i,g)=\cO(y_i).$$ 

In particular, the chain recurrence class $C(p)$ is wild.
\end{coro}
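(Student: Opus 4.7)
The plan is to iterate Theorem~\ref{t.wild} and invoke the Baire category theorem. First, since the hypothesis of Theorem~\ref{t.wild} is $C^1$-open (established in Section~\ref{s.statement}), there is a $C^1$-neighborhood $\cU$ of $f$ on which every $g$ satisfies the hypothesis, with continuations $p_g$, $p_{1,g}$, $q_g$. Within $\cU$ I would restrict attention to the standard residual subset $\cR_0$ on which $g \mapsto C(p_g,g)$ is Hausdorff-continuous and periodic orbits are dense in $C(p_g,g)$; these properties follow from general Baire-type results for upper semicontinuous compact-valued maps together with the Bonatti--Crovisier connecting lemma.

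For each $n \ge 1$ and each $s \in \{1,2\}$, I would let $\cE_n^s \subset \cU$ be the set of $g$ for which there exist a hyperbolic periodic orbit $\cO$ of $s$-index $s$ and an open neighborhood $U \supset \cO$ satisfying
\[
\mathrm{diam}(U) < 1/n, \quad \bigcap_{k \in \ZZ} g^k(\bar U) = \cO, \quad d_H(\cO, C(p_g,g)) < 1/n.
\]
Each $\cE_n^s$ contains an open dense subset of $\cU \cap \cR_0$: openness follows from the persistence of hyperbolic periodic orbits and of isolating blocks under $C^1$-perturbations, together with the Hausdorff-continuity of $g \mapsto C(p_g,g)$ on $\cR_0$, while density is exactly Theorem~\ref{t.wild}---for any $g_0 \in \cU$ the theorem produces $g$ arbitrarily $C^1$-close to $g_0$ carrying a hyperbolic saddle orbit of $s$-index $s$ whose chain recurrence class is trivial and whose Hausdorff distance to $C(p,f)$ (hence to $C(p_g,g)$ by continuity) is arbitrarily small; the trivial class admits arbitrarily small isolating neighborhoods via Conley's isolation theorem combined with the local hyperbolic structure of the saddle orbit.

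Setting $\cR := \cR_0 \cap \bigcap_{n,s} \cE_n^s$ then gives a residual subset of $\cU$. For $g \in \cR$, extract orbits $\cO_n^s$ witnessing the membership $g \in \cE_n^s$: the maximal invariant set in $U$ equals $\cO_n^s$, so any chain recurrence class meeting $U$ is contained in $\cO_n^s$, whence $C(\cO_n^s, g) = \cO_n^s$ is trivial; the bound $d_H(\cO_n^s, C(p_g,g)) < 1/n$ provides the Hausdorff convergence $\cO_n^s \to C(p_g,g)$ as $n \to \infty$. Writing $\cO(x_i) := \cO_i^2$ and $\cO(y_i) := \cO_i^1$ produces the two required sequences, and the accumulation of infinitely many distinct chain recurrence classes on $C(p_g,g)$ for every $g \in \cR$ means that $C(p)$ is wild by definition.

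The principal technical subtlety is that ``triviality of the chain recurrence class'' is not \emph{a priori} a $C^1$-open condition; the workaround is to recast triviality as ``admitting a small isolating neighborhood,'' an open condition for hyperbolic saddle orbits thanks to the persistence of isolating blocks, and to combine it with the Hausdorff-continuity of the class $C(p_g,g)$ on the residual subset $\cR_0$.
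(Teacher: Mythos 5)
Your argument is correct and follows exactly the route the paper sketches (it omits the proof but says it is ``Theorem~\ref{t.wild} plus a standard Baire-category argument using upper semicontinuity of chain recurrence classes''): you pass to the residual set $\cR_0$ of continuity points of $g\mapsto C(p_g,g)$, which is the standard way to exploit upper semicontinuity, and use persistence of isolating blocks for hyperbolic saddle orbits to get openness. The only phrasing to tighten is in the density step, where ``for any $g_0\in\cU$'' should read ``for any $g_0\in\cU\cap\cR_0$'' (so that Hausdorff continuity of the class map at $g_0$ is actually available when converting nearness to $C(p,g_0)$ into nearness to $C(p_g,g)$), after which density in $\cU$ follows because $\cR_0$ is itself dense; the remark about density of periodic orbits in $C(p_g,g)$ is not needed for this corollary.
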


The proof of Corollary~\ref{c.isolated} is done by Theorem~\ref{t.wild} and a standard argument 
involving Baire's category theorem, together with the upper semicontinuity of chain recurrence classes. 
We omit the proof.

As a direct corollary, we have the following:

\begin{coro}
On every closed $3$-manifold $M$ there is a non-empty $C^1$-open set $\cU\subset \mathrm{Diff}^1(M)$ 
such that every $f \in \mathcal{U}$ admits a  hyperbolic periodic point of $s$-index $2$ 
satisfying the following conditions:
\begin{itemize}
 \item the  chain recurrence class  $C(p)$ is wild 
 (and is a Hausdorff limit of a sequence of periodic orbits 
 with trivial classes);
 \item the class $C(p)$ is volume hyperbolic;
 \item the class $C(p)$ is not contained in a normally hyperbolic surface. 
\end{itemize}
\end{coro}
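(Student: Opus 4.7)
The plan is to assemble the statement directly from Proposition~\ref{p.exa} and Corollary~\ref{c.isolated}, exploiting the $C^1$-openness of each of the three required properties. First I would invoke Proposition~\ref{p.exa} to obtain a non-empty $C^1$-open set $\cU_{_{\mbox{\rm \tiny sdl}}}\subset \mathrm{Diff}^1(M)$ (or, equally well, $\cU_{_{\mbox{\rm \tiny att}}}$) on which every diffeomorphism satisfies the hypotheses of Theorem~\ref{t.wild} and admits a partially hyperbolic filtrating Markov partition whose maximal invariant set carries a volume hyperbolic splitting $E^{cs}\oplus E^u$. Since $C(p,f)$ is contained in this maximal invariant set, it inherits volume hyperbolicity, and this property is preserved under $C^1$-perturbations.

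Second, for each $f\in \cU_{_{\mbox{\rm \tiny sdl}}}$, Corollary~\ref{c.isolated} provides a $C^1$-neighborhood $\cV_f$ of $f$ and a residual subset $\cR_f\subset \cV_f$ on which $C(p_g,g)$ is the Hausdorff limit of two sequences of trivial classes, and is in particular wild. The key observation here is that the wildness of the class of $p$ is itself a $C^1$-open condition on $f$: if a neighborhood $\cV$ together with a residual subset $\cR\subset\cV$ witnesses the non-isolatedness of $C(p_f,f)$, then any $f'\in\cV$ inherits the same witness data for its own class $C(p_{f'},f')$. Consequently, the set of $f\in\cU_{_{\mbox{\rm \tiny sdl}}}$ whose class $C(p,f)$ is wild is $C^1$-open, and it is non-empty by the existence of points in the residual sets $\cR_f$.

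Third, none of these classes can be contained in a normally hyperbolic invariant surface $S$. Indeed, the hypotheses of Theorem~\ref{t.wild} robustly supply the saddles $p$ and $q$ of $s$-indices $2$ and $1$ in $C(p,f)$; but a non-trivial chain-recurrence class on an invariant surface contains only hyperbolic saddles of a single index (sinks and sources being isolated in dimension two), and normal hyperbolicity then forces all periodic points of $C(p)\cap S$ to share a common $s$-index in $M$, contradicting the coexistence of $p$ and $q$. This obstruction is itself $C^1$-open, being encoded by the robust persistence of $p$ and $q$ with their respective indices inside the class. Taking the intersection of the three resulting open subsets of $\cU_{_{\mbox{\rm \tiny sdl}}}$ yields the desired open set $\cU$. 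The only delicate point is unpacking the openness of wildness from its definition; the remainder is a direct compilation of the results already established in the paper.
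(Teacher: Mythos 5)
Your proposal is correct and follows the intended route: the paper gives no proof of this corollary (it calls it ``a direct corollary''), and what you do --- combine Proposition~\ref{p.exa} with Corollary~\ref{c.isolated}, observe that wildness is by its very definition a $C^1$-open property (the witness pair $(\cV,\cR)$ works verbatim for every $f'\in\cV$), and rule out a normally hyperbolic invariant surface by exploiting the coexistence of $p$ and $q$ of $s$-indices $2$ and $1$ --- is exactly the compilation the authors have in mind. One small caveat on the third bullet: your ``single $s$-index'' argument implicitly uses (without saying so) that a sink or source of the restricted dynamics on $S$ has a trivial chain recurrence class in $M$, which requires building a filtrating neighborhood from the normal hyperbolicity of $S$; the paper's own remark in the introduction gets there more directly by noting that, in the given partially hyperbolic setting with one-dimensional $E^u$, such a surface $S$ would be tangent to $E^{cs}$, so $p$ (having $s$-index $2$) would be a sink of $f|_S$, forcing $C(p)$ to be trivial and contradicting $C(p)=C(q)$. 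Also note that the parenthetical Hausdorff-limit statement in the corollary really only holds on a residual subset of $\cU$ (triviality of a chain recurrence class is not $C^1$-open), a looseness already present in the paper's formulation; your proof inherits it, which is acceptable.
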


The argument involving the flexible periodic points in \cite{BS} 
genuinely depended on the invariance of $C^1$-distance under 
rescaling. 
Thus the direct application of techniques developed 
in this paper does not work under the $C^r$-topology for $r \geq 2$.
Consequently, we still do not have 
any answer to the following question.
\begin{ques} Given $r \in [2, +\infty]$ 
and a $3$-manifold $M$,  does there exist 
$C^r$-locally generic diffeomorphisms on $M$   with a wild and  volume hyperbolic chain recurrence class
$C(p)$ which is not contained in a normally hyperbolic surface? 
\end{ques}

\subsection{Generic diffeomorphisms without attractors or repellers}

Following \cite{BLY} (see \cite{BLY} for the bibliography of
the results stated below without citation), 
we say that a compact invariant set 
$\Lambda \subset M$ is a {\it topological attractor} if the following 
holds:
\begin{itemize}
\item it is transitive; 
\item it has a compact attracting neighborhood $U$ (i.e., 
$U$ is a compact neighborhood of $\Lambda$ satisfying 
$f(U) \subset \mathrm{int}(U)$) such that $\bigcap_{i=0}^{+\infty}f^i(U) = \Lambda$ holds.
\end{itemize} 
{\it Topological repellers} mean topological attractors for $f^{-1}$.
Recall that a \emph{quasi-attractor} is 
a chain recurrence class admitting 
a basis of attracting neighborhoods. 
By definition, an topological attractor is a quasi-attractor, 
but the converse is not true in general.

It is known that for $f \in \mathrm{Diff}^1(M)$,
quasi-attractors always exist. 
In dimension $2$, it is known that $C^1$-generic diffeomorphsism 
always have an attractor and a repeller.
In dimension larger than or equal to $4$, \cite{BLY} built 
$C^r$-generic diffeomorphisms, 
for any $r\geq 1$, without attractors or repellers. 
However, on $3$-manifolds the argument in \cite{BLY} 
was not enough to get generic diffeomorphisms 
without attractors or repellers: 
they only constructed $\cC^r$-generic diffeomorphisms without 
attractors but with (infinitely many) repellers.

Our example, together with
Theorem~\ref{t.wild}, allows to obtain 
generic existence of a quasi-attractor 
which is accumulated by saddles with trivial classes. 
As a result, we have the following:
\begin{coro}\label{c.wild1}
On every closed $3$-manifold $M$ 
there is a non-empty $C^1$-open set $\cU\subset \mathrm{Diff}^1(M)$ and a 
residual subset $\cR\subset \cU$ 
such that every $f\in \cR$ has 
no topological attractors or topological repellers. 
Moreover, such $\cU$ and $\cR$ can be chosen so that 
every $g\in \cR$ has finitely many wild 
volume hyperbolic quasi-attractors 
whose basins cover a residual subset of $M$.
\end{coro}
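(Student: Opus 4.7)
The plan is to combine Proposition~\ref{p.exa} with Corollary~\ref{c.isolated} to build a $C^1$-open set in which every quasi-attractor (and symmetrically every quasi-repeller) is wild, and then invoke the observation that a wild quasi-attractor cannot be a topological attractor.

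First I would construct a specific diffeomorphism $f_0 \in \mathrm{Diff}^1(M)$ that exhibits a gradient-like filtration $M = A_1 \supset A_2 \supset \cdots \supset A_{k+\ell}$ whose ``strata'' $A_i \setminus A_{i+1}$ are either attracting or repelling filtrating regions diffeomorphic to $\mathbb{D}^2 \times [0,1]$ (plus the trapping $A_{k+\ell}$ and the repelling $M\setminus A_1$). Inside each attracting stratum, I would embed the local attracting-type model of Proposition~\ref{p.exa}, so that it carries a partially hyperbolic filtrating Markov partition satisfying the hypotheses of Theorem~\ref{t.wild} together with volume hyperbolicity. Inside each repelling stratum I would do the symmetric surgery for $f_0^{-1}$. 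The intermediate transition dynamics can be taken Morse--Smale, so the chain recurrent set of $f_0$ is exhausted by the maximal invariant sets of the inserted Markov partitions together with a finite collection of hyperbolic saddles with dynamically trivial role. Let $\cU \subset \mathrm{Diff}^1(M)$ denote the $C^1$-open set in which the partially hyperbolic filtrating Markov partitions of $g$ and $g^{-1}$ persist with all required properties.

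Next, by Corollary~\ref{c.isolated} applied to $g$ on each attracting piece and to $g^{-1}$ on each repelling piece, there is a residual $\cR_0\subset\cU$ on which every such chain recurrence class is wild, being a Hausdorff limit of periodic orbits with trivial chain recurrence classes. Intersecting with the classical $C^1$-generic properties (all chain recurrence classes transitive and Hausdorff-continuous in $g$; the union of basins of quasi-attractors residual in $M$) yields a residual $\cR \subset \cU$. I would then argue that for $g \in \cR$ no topological attractor exists: if $\Lambda$ were one with attracting neighborhood $U$ satisfying $\bigcap_{n\ge 0} g^n(U) = \Lambda$, then $\Lambda$ is a quasi-attractor and the filtrating structure forces $\Lambda$ to coincide with the wild class $C(p_g)$ produced in some attracting piece; but wildness provides periodic orbits $\mathcal{O}(x_m)$ with $\mathcal{O}(x_m) \not\subset \Lambda$ accumulating onto $\Lambda$, and for $m$ large such an orbit is in $U$ hence in $\bigcap_n g^n(U) = \Lambda$, a contradiction. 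The same reasoning applied to $g^{-1}$ rules out topological repellers, while the generic residuality of basins and the finiteness of the attracting pieces of the filtration give the last clause of the corollary.

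The main obstacle I expect is the global assembly in the first step: one must realize the local model of Proposition~\ref{p.exa} inside a prescribed cylindrical attracting region of an arbitrary closed $3$-manifold and do this simultaneously for $f_0$ and $f_0^{-1}$, while checking that the Morse--Smale glue between strata produces no additional quasi-attractors or quasi-repellers. Once this local-to-global compatibility is established, the rest of the argument is essentially a clean application of Corollary~\ref{c.isolated} and standard $C^1$-generic facts about quasi-attractors and their basins.
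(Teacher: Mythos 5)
Your proposal follows essentially the same route as the paper: take a Morse function (gradient-like filtration) of $M$, replace each sink by the attracting-type local model $(B^3,f)$ of Proposition~\ref{p.exa} and each source by $(B^3,f^{-1})$, then invoke Corollary~\ref{c.isolated} on each inserted piece to get generic wildness, from which the non-existence of topological attractors/repellers follows because a wild quasi-attractor is accumulated by periodic orbits with trivial chain recurrence classes, contradicting $\bigcap_{n\ge 0}g^n(U)=\Lambda$. The only difference is that you spell out the final "wild implies not a topological attractor" step, which the paper leaves implicit after citing Corollary~\ref{c.isolated}, but the underlying argument is identical.
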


We give a brief account on the proof of Corollary~\ref{c.wild1} in section 4.

\medskip

Thus, for $C^1$-case, the problem of the existence of 
attractors and repellers is almost settled. 
However, again it seems that the direct application 
of this paper's  techniques do not work 
under $C^r$-topology for $r \geq 2$.
Therefore, the following is a remaining
problem in this line:
\begin{ques} Given $2\leq r\leq +\infty$ and 
a $3$-manifold $M$,  does there exist 
$C^r$-locally generic diffeomorphisms on $M$ 
without attractors or repellers? 
\end{ques}

\medskip

{\bf Organization of this paper} 
In section~\ref{s.statement}
we give the precise definition of partially hyperbolic 
filtrating Markov partitions and 
present some of the basic properties of them. In section 3, 
using these properties and 
the notion of $\varepsilon$-flexible points, we prove 
Theorem~\ref{t.wild}.
In section 4, we prove
Proposition~\ref{p.exa}: we construct examples of 
diffeomorphisms which satisfy the hypotheses of 
Theorem~\ref{t.wild} and the volume hyperbolicity.
We close Section 4 by giving a short explanation 
of Corollary~\ref{c.wild1}. 

\smallskip

{\bf Acknowledgement }We thank Dawei Yang, Ming Li, Sylvain 
Crovisier and Rafael Potrie for discussions on the notion of flexible points and their use for ejecting periodic points.


\section{Partially hyperbolic filtrating Markov partitions}\label{s.statement}

In this section we give the precise definition of 
partially hyperbolic filtrating Markov partition and discuss several 
elementary properties of it. 
Throughout this article, $M$ denotes a closed, smooth, three-dimensional 
Riemannian manifold. We denote the group of $C^1$-diffeomorphisms of $M$
by $\mathrm{Diff}^1(M)$ and furnish it with the $C^1$-topology.

\subsection{Filtrating sets and chain recurrence classes}
Let us start with recalling the notion of filtrating set.
Let $f \in \mathrm{Diff}^1(M)$.
A compact set $A \subset M$ is 
called {\it attracting region} of $f$ if 
$f(A) \subset \mathrm{int}(A)$, where $\mathrm{int}(A)$
denotes the topological interior of $A$. 
A compact set $R \subset M$
is a {\it repelling region} of $f$ if it is an attracting region of $f^{-1}$. 
Finally, a \emph{filtrating set} $U$ of $f$ is a 
compact subset of $M$ 
which is an intersection of an attracting region $A$ and 
a repelling region $R$.
The main property of a filtrating set $U$ is that any point $x\in U$ which goes out $C$
(i.e., $f(x)\notin U$) has no future return to $U$: 
for every $n>0$ one has $f^n(x)\notin U$. 

Let us fix a distance function $d$ on $M$
and let $\varepsilon >0$. 
We say that a sequence of points $\{ x_n \}_{n = 0, \ldots, n_0}$ is 
an \emph{$\varepsilon$-pseudo orbit} of $f$
if for every $k \in [0, n_0 -1]$ we have $d(f(x_k), x_{k+1}) < \varepsilon$.
The \emph{chain recurrence set} $\cR(f)$  of $f$ is the set of points $x$ such that for every $\varepsilon>0$ 
there is an $\varepsilon$-pseudo orbit $x_0=x, x_1, \dots, x_{n_0}=x$ 
for some $n_0\geq 1$.
According to Conley theory, 
the chain recurrence set is naturally divided into a disjoint union of compact subsets called 
\emph{chain recurrence classes} 
by the equivalence relation on $R(f)$
defined as follows: two points $x,y\in\cR(f)$ 
belongs to the same chain recurrence class if 
for every $\varepsilon>0$ there are two $\varepsilon$-pseudo orbits, one starting from $x$ and 
ending at $y$ and the other starting from $y$ and ending at $x$. 
One can see that the notion of chain recurrence and chain recurrence 
classes do not depend on the choice of $d$.

It is easy to check that every chain recurrence class $C$ meeting a filtrating set $U$ is contained in $U$. 
On the other hand, Conley theory implies that every chain recurrence class admits a basis of 
neighborhoods which are filtrating sets.

Finally, note that being a attracting region is a $C^0$ (and therefore $C^1$)-robust property: if $A$ is an 
attracting region for $f$, then $A$ is an attracting region for every $g$ sufficiently $C^0$-close to $f$.  
Similarly, being a repelling region is a $C^0$-robust property
and consequently the same  holds for filtrating sets. 

\subsection{Invariant cone fields, 
partial hyperbolicity and volume hyperbolicity}

In this subsection we review the notion of cone fields and 
partial hyperbolicity.

Consider the cone $C=\{(x,y,z)\in\RR^3, |z|\geq \sqrt{x^2+y^2}\}$. 
In this paper a \emph{cone $\cC_x$} at a point $x \in M$ means 
the image of $C$ by some linear isomorphisms from $\RR^3$ to $T_xM$. 
A \emph{(continuous) cone field} $\cC=\{\cC_x\}$ on a subset $U\subset M$ is a 
family of cones $\cC_x$ of $T_xM$ ($x \in U$) which are locally defined 
as the image of $C$ by a linear isomorphisms $A_x\colon\RR^3\to T_xM$ 
depending continuously on $x$. 
In this article, we only treat continuous cone fields. 
Hence we omit the word continuous for cone fields.

For two cones $C_1$, $C_2$ at same point,  
we say that $C_1$ is \emph{strictly contained} in $C_2$ if 
$$C_1\subset \mathrm{int}(C_2)\cup\{0\}.$$ 
In the same way, we say that a linear subspace $L$ is \emph{strictly contained}  in a cone $\cC$ if 
$L\subset \mathrm{int}(\cC)\cup\{0\}$. 

\begin{defi} Let $U \subset M$.  
A cone field $\cC$ on $U$  is said to be \emph{strictly invariant} under $f$   
if for every $x\in U$ with $f(x)\in U$ one has 
that $Df_x(\cC_x)$ is strictly contained in $\cC_{f(x)}$. 

A cone field $\cC$ is said to be \emph{unstable cone field} 
if it is strictly invariant and there is a Riemann metric $\|\, \cdot \, \|$ on $M$ such that 
for every $x\in U$ with $f(x)\in U$, one has $\|Df(v)\|>\|v\|$
for every vector $v\in \cC_x\setminus\{0\}$ .
\end{defi}

The notion of unstable cone field is closely related to the notion of partial hyperbolicity.
\begin{defi}
 Let $K\subset M$ be an $f$-invariant compact set.  We say that $K$ is partially hyperbolic (with $1$-dimensional 
 unstable bundle) if there is a $Df$-invariant continuous splitting $TM|_{K}= E^{cs}\oplus E^u$ such that 
 the following holds:
 \begin{itemize}
  \item The splitting is \emph{dominated}: 
 there is $n>0$ such that for every $x \in K$,
 for every unit vector$u\in E^{cs}(x)$ and 
  $v\in E^u(x)$ we have the following inequality: 
  $$\|Df^n(u)\|<\frac12 \|Df^n(v)\|.$$
 By $E^{cs}\oplus_< E^{u}$ we mean that the 
 bundle $E^{cs}$ is dominated by $E^{u}$.
  \item The bundle $E^u$ is \emph{uniformly expanding}: there is $n>0$ such that for every $x \in K$ and for every unit vector 
  $v\in E^u(x)$, one has $\|Df^n(v)\|>1$.
 \end{itemize}

\end{defi}

In \cite{Go}, Gourmelon showed that, 
in the previous definition we can always 
 choose a Riemannian metric for which $n=1$. 
As a consequence we have the following: 

\begin{lemm}\label{l.conenbd}
 Let $U\subset M$ be a compact subset and 
$K=\bigcap_{n\in\ZZ} f^n(U)$ be its maximal invariant set. 
Then,

\begin{itemize}
 \item $K$ admits a dominated splitting $E\oplus_< F$  
 if and only if there is a strictly invariant cone field $\cC$ on some 
 neighborhood of $K$.
 \item $K$ is partially hyperbolic (with a $1$-dimensional unstable bundle) if and only if there is a
 strictly invariant unstable cone field $\cC$ on some neighborhood of $K$. 
\end{itemize}
\end{lemm}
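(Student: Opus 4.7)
The plan is to establish both equivalences in parallel, using the standard dictionary between dominated splittings and strictly invariant cone families.

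For the forward directions, suppose $K$ admits a dominated splitting $E\oplus_< F$ (partially hyperbolic with $F=E^u$ in the second item). By Gourmelon's theorem, cited in the paragraph above the lemma, I may choose a Riemannian metric adapted so that the defining inequality holds with $n=1$ (and, in the partially hyperbolic case, $\|Df(v)\|>\lambda\|v\|$ for some $\lambda>1$ and every unit $v\in E^u$). For a small angular aperture $\alpha>0$, define $\cC_x$ on $K$ to be the cone of opening $\alpha$ around $F(x)$. The one-step domination forces $Df_x(\cC_x)$ to sit inside a cone around $F(f(x))$ of strictly smaller aperture, hence strictly contained in $\cC_{f(x)}$; in the unstable case, the one-step expansion bound on $F$ combined with a sufficiently small $\alpha$ yields $\|Df(v)\|>\|v\|$ for $v\in\cC_x\setminus\{0\}$. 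Finally I extend $\cC$ continuously to a neighborhood $V$ of $K$ using continuity of $F$, and shrink $V$ so that $x,f(x)\in V$ still implies strict invariance (resp. expansion); compactness of $K$ makes this uniform.

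For the backward directions, suppose $\cC$ is a strictly invariant cone field on a neighborhood $V$ of $K$. For $x\in K$, all iterates $f^n(x)$ lie in $V$, so the nested intersection
$$F(x)=\bigcap_{n\geq 0} Df^n\bigl(\cC_{f^{-n}(x)}\bigr)$$
is well-defined. Strict invariance implies that in a fixed projective chart the image $Df(\cC)$ has projective diameter bounded by a factor $\theta<1$ times that of $\cC$; iterating, the diameters shrink to $0$, so $F(x)$ is a single line, depends continuously on $x$, and is $Df$-invariant. Applying the same argument to the backward dynamics with the closed complementary cone $\cC^c$, which is strictly $Df^{-1}$-invariant as a direct consequence of $Df(\cC)\subset\mathrm{int}(\cC)\cup\{0\}$, yields a continuous two-dimensional invariant bundle $E(x)=\bigcap_{n\geq 0} Df^{-n}(\cC^c_{f^n(x)})$. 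Then $TM|_K=E\oplus F$, and domination follows from the exponential projective contraction: a unit vector in $E$ cannot be pushed into $\cC$ by $Df^n$, so the angle argument converts projective contraction into the required norm comparison. When $\cC$ is an unstable cone field, the expansion hypothesis applies directly to unit vectors of $F\subset\cC$, giving uniform expansion of $F$ after taking a suitable iterate.

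The main obstacle is the key quantitative step in the backward direction: showing that strict invariance alone forces a uniform projective contraction factor $\theta<1$. This is not automatic from the pointwise condition $Df(\cC)\subset\mathrm{int}(\cC)\cup\{0\}$, but follows from compactness of $K$ (hence of the relevant unit bundle) together with the continuity of $Df$ and $\cC$, which makes the strict inclusion uniform. Once this uniform contraction is in hand, the one-dimensionality of $F$, continuity of the splitting, and domination all drop out as consequences, and the partially hyperbolic case is obtained by adding the one-step expansion bound that comes packaged in the definition of unstable cone field.
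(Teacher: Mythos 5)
The paper does not actually prove this lemma: it states it without proof, as a folklore consequence of Gourmelon's adapted-metric theorem (\cite{Go}), which is cited in the immediately preceding paragraph. Your argument supplies the standard ``cone criterion'' proof (as in \cite{BDV} or standard lecture notes on dominated splittings), so there is no paper proof to compare against; what you wrote is exactly the argument the authors are implicitly invoking.

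Your forward direction is fine: Gourmelon gives $n=1$, a narrow cone around $F$ is strictly invariant by domination, and compactness plus continuity lets you fatten the invariance to a neighborhood. The backward direction is also correct, but two phrases are looser than they should be. First, ``$Df(\cC)$ has projective diameter bounded by a factor $\theta<1$ times that of $\cC$'' is not literally what uniform strict inclusion gives you; the precise mechanism is Birkhoff's theorem: a linear map sending a cone strictly inside another is a uniform contraction of the Hilbert (projective) metric, with contraction factor controlled by the gap, and then the nested images $Df^n(\cC_{f^{-n}(x)})$ have Hilbert diameter $\to 0$ and collapse to a line. Second, ``extend $\cC$ continuously to a neighborhood $V$ of $K$ using continuity of $F$'' glosses over the fact that $F$ is only defined on $K$; the cleaner route is to first extend $F$ (or the cone field itself) continuously to a neighborhood by a Tietze-type argument, and then observe that the open condition ``$Df_x(\cC_x)$ strictly inside $\cC_{f(x)}$'' persists on a smaller neighborhood of $K$ by compactness. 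Neither of these is a genuine gap — both are standard and easily repaired — but in a written-up version they deserve to be made explicit. The one-dimensionality of $F$ and two-dimensionality of $E$ (via the complementary cone, which is strictly $Df^{-1}$-invariant as you correctly observe), the domination by the angle-vs-norm comparison, and the unstable-case expansion from compactness of the unit sphere bundle over $\overline U$ are all as they should be.
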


Let $K$ be an invariant set with a dominated splitting $E\oplus_{<}F$
where $\dim E=2$ and $\dim F =1$.
We say that $K$ is {\it volume hyperbolic} if the determinant
 of the derivative  $Df$ restricted to $E$ is uniformly contracting and 
$Df$ restricted to $F$ is uniformly expanding.
In this paper, 
for a linear isomorphism $L\colon V_1\to V_2$  between two Euclidean 
spaces and a subspace $W_1\subset V_1$,
by \emph{determinant of the restriction $L|_{W_1}$} 
we mean
the determinant calculated with respect to the
Euclidean structure that $W_1$ and $L(W_1)$ inherit from $V_1$ and $V_2$, respectively.


The existence of a dominated, partially hyperbolic, or volume hyperbolic  splitting 
on a maximal invariant set is a $C^1$-robust property
in the following sense (see \cite{BDV} for the detail).
\begin{lemm}
Let $U$ be a compact subset of $M$, and 
$K=\bigcap_{n\in\ZZ} f^n(U)$ be its  maximal invariant set.
Suppose $K$ admits a dominated splitting $E\oplus_{<}F$. Then,
\begin{itemize}
 \item if $g$ is sufficiently $C^1$-close to $f$, then the maximal 
 invariant set $K_g :=  \bigcap_{n\in\ZZ} g^n(U)$ also 
 admits a ($g$-invariant) 
 dominated splitting $E_g \oplus_{<} F_g$, where 
 $E_g$ and $F_g$ are the continuations of $E$ and $F$, respectively.
 \item if the splitting $E\oplus_{<}F$ is volume hyperbolic and $g$ is 
 sufficiently $C^1$-close to $f$, then 
 $E_g \oplus_{<} F_g$ is also volume hyperbolic.
 \end{itemize}
 \end{lemm}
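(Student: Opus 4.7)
The plan is to reduce both assertions to the cone field characterization of Lemma \ref{l.conenbd}, exploiting the fact that strict invariance of a cone field is an open condition under $C^1$-perturbations.

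For the first assertion, since $K$ admits a dominated splitting $E\oplus_< F$, Lemma \ref{l.conenbd} supplies a strictly invariant cone field $\cC$ on some open neighborhood $V$ of $K$ (an unstable cone field if the splitting is partially hyperbolic). Because $U$ is compact and $K=\bigcap_{n\in\ZZ}f^n(U)\subset V$, a standard compactness argument yields an integer $N\geq 1$ such that $\bigcap_{|n|\leq N}f^n(U)\subset V$. By continuity of $g\mapsto g^n$ in the $C^0$-topology, for every $g$ sufficiently $C^1$-close to $f$ one still has $\bigcap_{|n|\leq N}g^n(U)\subset V$, and in particular $K_g\subset V$. Next I would verify that $\cC$ remains strictly invariant under $g$ on $V$: the conditions $Dg_x(\cC_x)\subset\mathrm{int}(\cC_{g(x)})\cup\{0\}$ and, in the partially hyperbolic case, $\|Dg(v)\|>\|v\|$ for $v\in\cC_x\setminus\{0\}$, are defined by strict inequalities that hold uniformly on the compact set $\overline{V}\cap f^{-1}(\overline{V})$ for $f$ and depend continuously on $g$ in the $C^1$-topology. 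Applying the converse direction of Lemma \ref{l.conenbd} to $g$ then produces the desired $g$-invariant dominated (or partially hyperbolic) splitting $E_g\oplus_< F_g$ on $K_g$. The continuation property is built in: the cone field construction recovers $F_g(x)=\bigcap_{n\geq 0}Dg^n(\cC_{g^{-n}(x)})$, which depends continuously on $g$, and the symmetric argument applied to $g^{-1}$ with the dual cone field produces the complement $E_g$.

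For the second assertion, by Gourmelon's result cited just before the statement, we may choose a Riemannian metric on $M$ for which the volume hyperbolicity assumption is witnessed in one iterate, i.e., $\|Df(v)\|>1$ for every unit vector $v\in F$ and $|\det(Df|_{E(x)})|<1/2$ for every $x\in K$. Both inequalities are strict and, through the continuous dependence of $E_g, F_g$ on $g$ obtained above, they depend continuously on $g$ on the compact set $V$. Hence for $g$ sufficiently $C^1$-close to $f$ they remain valid on $K_g\subset V$, giving volume hyperbolicity for $E_g\oplus_< F_g$.

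The main subtlety lies in controlling the inclusion $K_g\subset V$ uniformly in $g$: a priori $K_g$ is an infinite intersection and could in principle fail to be contained in $V$ even if each individual $g^n(U)$ is $C^0$-close to $f^n(U)$. Replacing the infinite intersection defining $K$ by a finite one of length $2N+1$ by compactness is the only step of real content; once it is in place, the whole argument reduces to the continuous dependence of $Dg$ and to the continuous extraction of invariant subbundles from a strictly invariant cone field.
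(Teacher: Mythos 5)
The paper does not prove this lemma; it only cites \cite{BDV} for the standard robustness facts, so there is no in-paper argument to compare against. Your proof is correct and follows the usual route: via Lemma~\ref{l.conenbd}, obtain a strictly invariant cone field $\cC$ on an open neighborhood $V\supset K$; truncate $K=\bigcap_{n\in\ZZ}f^n(U)$ to a finite intersection of length $2N+1$ by compactness to get $\bigcap_{|n|\leq N}f^n(U)\subset V$, which is a $C^0$-open condition, hence $K_g\subset V$ for $g$ near $f$; observe that strict invariance of $\cC$ is an open condition on the $1$-jet; and close the loop with the converse direction of Lemma~\ref{l.conenbd}. For the volume hyperbolicity item, the continuous dependence of $E_g,F_g$ on $g$ extracted from the cone field, together with the strict uniform inequality on $K$, gives persistence.

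Two points worth tightening. First, to verify strict invariance of $\cC$ under $Dg$ on a set independent of $g$, shrink to $V'$ with $\overline{V'}\subset V$ and $K\subset V'$; then for $x,g(x)\in V'$ with $g$ $C^1$-close to $f$ one also has $f(x)\in V$, so the compactness-plus-openness argument applies without worrying that $\{x\in\overline V: g(x)\in\overline V\}$ varies with $g$. Second, the inequality $|\det(Df|_{E(x)})|<1/2$ conflates the factor $\tfrac12$ in the paper's definition of domination with the volume-contraction requirement; all that is needed is a uniform bound $<1$. That such a bound can be achieved in one iterate either follows from the determinant version of Gourmelon's adapted-metric theorem, or more elementarily from the classical summation trick applied to the $1$-dimensional bundle $\det E$. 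With those cosmetic fixes your argument is complete and is essentially the proof in \cite{BDV}.
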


\subsection{Partially hyperbolic filtrating Markov partitions}\label{ss.partialhyp}
In this subsection, we give the precise definition of
partially hyperbolic filtrating Markov partitions. 

A compact subset $R$ of $M$ is said to be a \emph{rectangle} if it is 
$C^1$-diffeomorphic to 
the full compact cylinder $\DD^2\times [0,1]\subset \RR^3$, where $\DD^2$ 
is the unit disc of $\RR^2$.  
We endow $\DD^2\times [0,1]$ with the coordinates 
$(x,y,z)$ of $\RR^3$. 

We denote by $\partial_s R$ the image of 
$(\partial \DD^2)\times [0,1]$ (which is diffeomorphic to the 
annulus $S^1 \times [0,1]$) and 
call it the \emph{side boundary} of $R$. 
Also, we denote by $\partial_l R$ the image of 
$\DD^2\times  \partial [0,1] =\DD^2\times  \{0,1\}$ 
and call it the \emph{lid boundary} of $R$
(which has two connected components).   
Given a rectangle $R \subset M$, 
a \emph{vertical sub-rectangle} of $R$ is a rectangle $R_1\subset R$ such that
the following holds:
\begin{itemize}
 \item $R_1$ is disjoint from $\partial_s(R)$, and
 \item  $\partial_l (R_1)\subset \partial_l(R)$ and each connected component of $\partial_l(R)$ 
 contains exactly one connected component of $\partial_l(R_1)$.
\end{itemize}
A \emph{horizontal sub-rectangle} $R_2$ of $R$ is a rectangle $R_2\subset R$ satisfying the following:
\begin{itemize}
 \item $\partial_s(R_2)\subset \partial_s(R)$ and $\partial_s(R_2)$ is a essential sub-annulus of the annulus 
 $\partial_s(R)$, and
 \item each connected component of $\partial_l(R_2)$ is either disjoint from $\partial_l(R)$ or 
 coincides with a connected component of $\partial_l(R)$. 
\end{itemize}

A cone field $\cC$ on $R$ is called \emph{vertical} if there is a diffeomorphism 
$\varphi\colon R\to \DD^2\times [0,1]$ such that for every $p\in R$ the cone
$D\varphi(\cC(p))$ contains the vertical vector $\frac\partial{\partial z}$ and is 
transverse to the horizontal plane field spanned by 
$\frac\partial{\partial x}$ and $\frac\partial{\partial y}$.

If $\cC$ is a vertical cone field on $R$ and $R_1\subset R$ is a vertical 
sub-rectangle, we say that $R_1$ is 
\emph{$\cC$-vertical} if the restriction of $\cC$ to $R_1$ is a vertical cone field of $R_1$.
Also, if $\cC$ is a vertical cone field on $R$ and $R_1\subset R$ is a horizontal sub-rectangle, 
we say that $R_1$ is \emph{$\cC$-horizontal} if the restriction 
of $\cC$ to $R_1$ is a vertical cone field of $R_1$.

\begin{defi}\label{d.saddle}
Let $f \in \mathrm{Diff}^1(M)$. A compact subset ${\bf R}$ of  $M$ is said to be 
a \emph{partially hyperbolic filtrating Markov partition of saddle type of $f$} if:
\begin{itemize}
\item ${\bf R}$ is a filtrating set: ${\bf R} = A \cap R$ where $A$ is an attracting region and 
$R$ is a repelling region.
\item ${\bf R}$ is the union of finitely many pairwise disjoint rectangles 
${\bf R} = \bigcup_{i=1,\dots, k} R_i$.
\item The side boundary $\partial_s(R_i)$ is contained in $\partial A$ and the lid boundary 
$\partial_l(R_i)$  is contained in $\partial R$ (the boundary of repelling region).
\item For every $(i, j)$, each connected component of $f(R_j)\cap R_i$ is
 a vertical sub-rectangle of  $R_i$.
\item There is a strictly invariant unstable cone field $\cC^u$ on ${\bf R}$ which is vertical on each $R_i$. 
\end{itemize}
\end{defi}

\begin{defi}\label{d.attracting}
Let $f \in \mathrm{Diff}^1(M)$. A compact subset ${\bf R}$ of  $M$ is said to be 
a \emph{partially hyperbolic filtrating Markov partition of attracting type of $f$} if:
\begin{itemize}
\item {\bf R} is an attracting region.
\item ${\bf R}$ is the union of finitely many rectangles 
${\bf R} = \bigcup_{i=1,\dots, k} R_i$.
\item For every $i\neq j$, one of the following holds:
\begin{itemize}
 \item either $R_i$ and $R_j$ are disjoint,
 \item or, $R_i\cap R_j$ is exactly one connected component of $\partial_l(R_i)$ or of $\partial_l(R_j)$ 
 contained in the interior of a component of  $\partial_l(R_j)$ or of $\partial_l(R_i)$ respectively.
 In this case we say that $R_i$ and $R_j$ are \emph{adjacent}. 
\end{itemize}
\item The side boundary $\partial_s(R_i)$ is contained in $\partial A$. 
\item For every $(i, j)$,  
each connected component of $f(R_j)\cap R_i$ satisfies one of the following conditions:
\begin{itemize}
\item Either it is a vertical sub-rectangle of $R_i$,
\item or, it is a connected
component of $f(\partial_l (R_j))$ contained in a component of $\partial_l (R_i)$.
\end{itemize}
\item There is a strictly invariant unstable cone field $\cC^u$ on ${\bf R}$ which is vertical on each $R_i$. 
\end{itemize}
\end{defi}

\begin{rema}
In Definition~\ref{d.saddle}, the fourth condition 
(on the shape of each connected 
components) can be derived from the other conditions. Meanwhile, 
the corresponding condition 
in Definition~\ref{d.attracting} cannot. 
Hence, for the sake of the consistency, 
we included this condition into the definition.
\end{rema}

\begin{rema}
As the cone field $\cC$ is strictly invariant, one can show that, 
in Definitions~\ref{d.saddle} and \ref{d.attracting}, 
if a connected component of $f(R_j)\cap R_i$ is a 
vertical sub-rectangle of $R_i$, then it is also  $\cC$-vertical. 
\end{rema}

It would be possible that one gives a more general, conceptual definition of 
partially hyperbolic filtrating Markov partition 
including both types, but the definition and some proofs would become more technical.  
Since the main purpose of this article is to present  examples, 
we prefer to stop pursuing the generality and continue the study of them.

In the rest of this section, we will investigate the basic properties 
of partially hyperbolic filtrating Markov partitions. 
Most of the proofs are quite classical. Hence we often avoid the formal proofs and only give the 
sketch of them.
In many cases, the same proof works for the saddle type and the attracting type. 
However, the following lemma requires some modification depending on the types.

\begin{lemm}\label{l.continuation}
The property of being a partially hyperbolic filtrating Markov partition 
(of saddle or attracting type) is a $C^1$-robust property in the following sense:
\begin{enumerate}
\renewcommand{\labelenumi}{(\arabic{enumi})}
 \item Assume that ${\bf R}=\bigcup R_i$ is a partially hyperbolic filtrating Markov partition of saddle type 
 of a diffeomorphism $f$. 
 Then there is a $C^1$-neighborhood $\cU$ of $f$  such that for any $g\in\cU$, 
 $\bf R$ is a partially hyperbolic filtrating Markov partition of saddle type as well.
 \item Assume now that ${\bf R}=\bigcup R_i$ is a partially hyperbolic filtrating Markov partition of 
 attracting type 
 of a diffeomorphism $f$. Then there is a $C^1$-neighborhood $\cU$ of $f$  so that, for any $g\in\cU$, 
 there is a diffeomorphism $\psi$ which is $C^1$-close to the identity such that ${\bf R}_g = \bigcup \psi(R_i)$ 
 is a partially hyperbolic filtrating Markov partition of saddle type for $g$. 
\end{enumerate}

\end{lemm}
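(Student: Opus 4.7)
The plan is to verify, for each of parts (1) and (2), that every clause of the relevant definition persists under $C^1$-perturbation of $f$ to $g$.

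For part (1), I would check the clauses of Definition~\ref{d.saddle} for the pair $(g,{\bf R})$ in turn. The filtrating condition ${\bf R}=A\cap R$ is $C^0$-robust since attracting and repelling regions are $C^0$-robust properties; the boundary conditions $\partial_s R_i\subset\partial A$ and $\partial_l R_i\subset\partial R$ involve only the ambient sets and pass to $g$ unchanged; strict invariance of the vertical unstable cone field $\cC^u$ is $C^1$-open by Lemma~\ref{l.conenbd}. The only nontrivial clause is that each connected component of $g(R_j)\cap R_i$ is a vertical sub-rectangle of $R_i$. Since $g(R_j)\subset g(A)\subset\operatorname{int}(A)$ and $\partial_s R_i\subset\partial A$, no such component can meet the side boundary of $R_i$; and since $\partial_l R_j\subset\partial R$ with $R$ repelling for $g$, one has $g(\partial_l R_j)\cap R=\emptyset$, so the top and bottom of each component of $g(R_j)\cap R_i$ must sit in $\partial_l R_i$. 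The topological transversality already available for $f$, combined with strict $Dg$-invariance of $\cC^u$ (which keeps $g(R_j)$ foliated by curves almost tangent to $\cC^u$), then upgrades this to the vertical sub-rectangle property by a $C^0$-stability argument: each component of $g(R_j)\cap R_i$ is $C^0$-close to a vertical sub-rectangle produced by $f$ and meets the two components of $\partial_l R_i$ in the prescribed way.

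For part (2), the delicate point in Definition~\ref{d.attracting} is the rigid adjacency clause: when $R_i$ and $R_j$ meet along a common lid disc, the Markov condition forces the exact inclusion $f(\partial_l R_j)\subset\partial_l R_i$ of $2$-discs, and this is destroyed by a generic $C^1$-perturbation. The role of $\psi$ is precisely to absorb this loss of rigidity. The collection of internal lid discs (together with those components of $\partial_l R_i$ that sit in $\partial{\bf R}$) forms an $f$-invariant family of embedded $2$-discs uniformly transverse to $\cC^u$ and attracting in the center-stable direction; a graph-transform style argument, run in the direction opposite to $\cC^u$, produces a nearby $g$-invariant family of such discs. I would then take $\psi$ to be a compactly supported $C^1$-small isotopy of the identity that carries each original lid disc onto the corresponding $g$-invariant disc while fixing the side boundaries $\partial_s R_i$; after this adjustment the Markov lid-to-lid matching holds for the pair $(g,\psi({\bf R}))$, and the part (1) argument applied pointwise shows that internal crossings are vertical sub-rectangles for $g$.

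The main obstacle is to upgrade $\psi({\bf R})$ to saddle-type data, i.e.\ to exhibit a repelling region $R_g$ with $\psi({\bf R})=A_g\cap R_g$, to check pairwise disjointness of the $\psi(R_i)$, and to verify $\partial_l\psi(R_i)\subset\partial R_g$. The isotopy $\psi$ should be arranged to separate previously adjacent rectangles by a thin slab, and the natural candidate for $R_g$ is the closure of the complement of a $g$-forward-invariant tubular neighborhood of a cross-section dual to the $g$-invariant lid family; this complement is $g^{-1}$-attracting because of the uniform expansion along $\cC^u$. Building this $R_g$ cleanly—so that its boundary captures precisely the lid discs of $\psi({\bf R})$ and nothing more of $\psi({\bf R})$—is where the technical heart of the lemma lies.
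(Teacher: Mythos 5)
Your part (1) is correct and follows the paper's own route: the filtrating structure and the cone field persist for soft reasons, and the only real work is to show that the components of $g(R_j)\cap R_i$ stay vertical sub-rectangles, which you do exactly as the paper does (disjointness from $\partial_s R_i$ via $g(A)\subset\mathrm{int}(A)$, lids forced into $\partial_l R_i$ via the repelling region, verticality from the invariant cone field). The first half of your part (2) also matches the paper: the union of lid discs $S_f=\bigcup_i\partial_l(R_i)$ is a positively invariant surface, normally hyperbolic because of the vertical unstable cone field, and its persistence (the paper cites Hirsch--Pugh--Shub; your graph transform is the same mechanism) supplies the perturbed lid discs onto which $\psi$ carries the original ones.

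The last paragraph of your part (2), however, is a wrong turn. You take the phrase ``of saddle type'' in the conclusion of item (2) literally and try to manufacture a repelling region $R_g$ with $\partial_l\psi(R_i)\subset\partial R_g$. This is not what the lemma intends --- the paper's own proof ends by concluding that $\bigcup R_{i,g}$ is a partially hyperbolic filtrating Markov partition \emph{of attracting type} for $g$, and every later use of the lemma only needs the type to be preserved; the word ``saddle'' there is evidently a misprint. More importantly, the step you defer as ``the technical heart'' cannot be carried out in general: for an attracting-type partition such as the solenoid example of Section 4, the maximal invariant set $\bigcap_{n\geq 0}g^n({\bf R}_g)$ meets the lid discs (lids map into lids, and the solenoid crosses every meridian disc), while a repelling region $R_g\supset{\bf R}_g$ would force that maximal invariant set into $\mathrm{int}(R_g)$ and hence off $\partial R_g$, contradicting $\partial_l\psi(R_i)\subset\partial R_g$. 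So you should delete that paragraph and instead verify the clauses of the attracting-type definition for $(g,\bigcup\psi(R_i))$: that $\bigcup\psi(R_i)$ is an attracting region, that the adjacency pattern of lids is reproduced (this is exactly what the invariant lid family gives you), and that the crossing components are vertical sub-rectangles, for which your part (1) argument applies verbatim.
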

\begin{proof}[Sketch of the proof]
Let us start the proof for the saddle type case.
Being a filtrating  set is a $C^0$-robust property. 
Having a strictly invariant unstable cone field is a $C^1$-robust property 
and the fact that a cone field is vertical  over a cylinder is also $C^1$-robust. 
Therefore the unique difficulty is to obtain the condition on the intersection $f(R_j)\cap R_i$.

The existence of the invariant cone field implies that $f(\partial_s(R_j))$ 
cuts transversely $\partial_l(R_i)$.  
Notice that  $\partial_s(R_j)$ and $\partial_l(R_i)$ are compact surfaces with boundary 
and the fact that $f(\partial_s(R_j))\subset \partial A$ and $\partial_l(R_i)\subset \partial R$
implies that this intersection $f(R_j)\cap R_i$ is disjoint from 
$\partial_s(R_i)$. 
Now one can deduce that, for $g$ which is sufficiently $C^1$-close to $f$,
each connected component of $g(R_j)\cap R_i$ remains to be a $\cC$-vertical sub-rectangle,
since the side boundary of each connected component of $g(R_j)\cap R_i$ varies $C^1$-continuously. 

For the attracting type,  the union $S_f=\bigcup_i \partial_l(R_i)$ is a 
strictly positively $f$-invariant compact surface with boundary.  A priori, this surface is not invariant
for $g$ close to $f$. So we need to find a candidate for the lid boundaries of rectangles for $g$. 

Note that the existence of the 
vertical strictly invariant unstable cone field $\cC$ implies that $S_f$ is normally hyperbolic. 
Therefore, the persistence of normally hyperbolic invariant 
manifolds (see \cite{HPS} for example)  
implies that $\partial_l R_i$ varies continuously under $C^1$-small perturbations of $f$. 

Thus for $g$ which is $C^1$-close to $f$, we can construct cylinders $R_{i,g}$ whose boundary is 
close to the boundary of corresponding $R_i$ and so that 
$S_g=\bigcup{\partial_l(R_{i,g})}$ is a positively strictly invariant compact surface with boundary. 
Then, take the union $R_g=\bigcup R_{i,g}$. 
One can deduce that $R_g$ is an attracting region for $g$ and 
$ \bigcup R_{i,g}$ is an attracting partially hyperbolic Markov partition for $g$. 
\end{proof}

\subsection{Iteration, refinement of Markov partitions}

Let ${\bf R}=\bigcup R_i$ be a partially hyperbolic filtrating Markov partition 
(of saddle or attracting type) of $f$
endowed with a vertical unstable cone field $\cC$. 
Next proposition follows from standard arguments 
in hyperbolic dynamics, together with 
strongly invariant vertical unstable cones.

\begin{prop}\label{p.image_easy}
\begin{enumerate}
\renewcommand{\labelenumi}{(\arabic{enumi})}
 \item Suppose that the connected component $f(R_i) \cap R_j$ 
 is a vertical sub-rectangle of $R_j$. Then it is a 
 $\cC$-vertical and $Df(\cC)$-vertical sub-rectangle of $R_j$. 
\item Similarly, if the connected component of $f^{-1}(R_i) \cap R_j$
is a horizontal sub-rectangle of $R_j$, then it indeed is 
both $\cC$-horizontal and $Df^{-1}(\cC)$-horizontal sub-rectangle of $R_j$.
\end{enumerate} 
\end{prop}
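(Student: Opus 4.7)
The plan is to handle part (1) in detail; part (2) follows by the dual argument applied to $f^{-1}$ with the roles of vertical and horizontal interchanged. Fix a connected component $S$ of $f(R_i)\cap R_j$, assumed to be a vertical sub-rectangle of $R_j$. I must exhibit two (possibly distinct) parametrizations $\psi_S,\psi'_S\colon S\to\DD^2\times[0,1]$ under which $\cC|_S$ and $Df(\cC)|_S$, respectively, become vertical cone fields of $S$.

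For the $\cC$-verticality, the plan is to inherit the vertical structure directly from $R_j$. Starting from the diffeomorphism $\varphi_j\colon R_j\to\DD^2\times[0,1]$ making $\cC|_{R_j}$ vertical, the image $\varphi_j(S)$ is a cylindrical region whose lid boundary lies in $\DD^2\times\{0,1\}$ and whose side boundary is disjoint from $\partial\DD^2\times[0,1]$. A standard argument, using that $\partial/\partial z$ is contained in $D\varphi_j(\cC)$ and that this cone field is transverse to $\partial_s(\varphi_j(S))$ (so vertical lines cannot tangentially re-enter the region), shows that each vertical fiber $\{p\}\times[0,1]$ meets $\varphi_j(S)$ in at most a single arc. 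This connectedness allows the construction of a diffeomorphism $h\colon\varphi_j(S)\to\DD^2\times[0,1]$ preserving the vertical direction up to positive scalars, and the composition $\psi_S=h\circ\varphi_j|_S$ witnesses the $\cC$-verticality of $S$.

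For the $Df(\cC)$-verticality, the plan is a pullback construction via $f$ using $R_i$'s vertical structure. The preimage $T=f^{-1}(S)$ is a connected subset of $R_i$ mapped diffeomorphically onto $S$ by $f|_T$. By the Markov property combined with strict invariance of $\cC$, a standard boundary analysis---exploiting that $\partial_l(R_j)\subset\partial R$ and $\partial_s(R_j)\subset\partial A$ in the saddle type (with a similar check in the attracting type)---shows that $T$ is a horizontal sub-rectangle of $R_i$. Applying to $T$ the analogous direct argument, now for a horizontal sub-rectangle, yields a parametrization $\psi_T\colon T\to\DD^2\times[0,1]$ under which $\cC|_T$ is vertical. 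Setting $\psi'_S=\psi_T\circ (f|_T)^{-1}$, the chain rule gives
$$D\psi'_S\bigl(Df(\cC)|_S\bigr)=D\psi_T\bigl(Df^{-1}(Df(\cC)|_S)\bigr)=D\psi_T(\cC|_T),$$
which is vertical by the choice of $\psi_T$. Hence $Df(\cC)|_S$ is vertical under $\psi'_S$, completing part (1).

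Part (2) is entirely analogous: given a connected component $T$ of $f^{-1}(R_i)\cap R_j$ that is a horizontal sub-rectangle of $R_j$, the $\cC$-horizontality of $T$ follows from the direct construction applied to $\varphi_j(T)$, and the $Df^{-1}(\cC)$-horizontality is obtained by pulling back via $f|_T$ the vertical parametrization of the corresponding vertical sub-rectangle $S=f(T)$ of $R_i$. The main technical obstacle in the whole proof is the verification of the dual Markov behavior ($f^{-1}(S)$ horizontal, $f(T)$ vertical), which rests on the filtrating property of $\bf R$ and the strict invariance of $\cC$; once this is granted, the pullback arguments finish both statements cleanly.
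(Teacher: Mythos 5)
The core difficulty is your argument for $\cC$-verticality of $S$. You assert that the cone field $D\varphi_j(\cC)$ is transverse to $\partial_s(\varphi_j(S))$, and you use this to conclude that the vertical lines $\{p\}\times[0,1]$ meet $\varphi_j(S)$ in a single arc. But this transversality claim is false. Since $S$ is a connected component of $f(R_i)\cap R_j$ disjoint from $\partial_s R_j$, its side boundary satisfies $\partial_s S\subset f(\partial_s R_i)$, and $\partial_s R_i$ is everywhere tangent to $\cC$ (in the $\varphi_i$-coordinates it is $\partial\DD^2\times[0,1]$, which contains the $z$-direction). Hence $\partial_s S$ is everywhere tangent to $Df(\cC)\subset\cC$, so $\partial_s(\varphi_j(S))$ is everywhere \emph{tangent} to $D\varphi_j(\cC)$, not transverse. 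Nothing then prevents $\varphi_j(S)$ from being a ``twisted tube'' whose side boundary drifts across vertical fibers, so the single-arc claim, and with it the existence of your fiber-preserving diffeomorphism $h$, is unjustified. Your pullback construction for $Df(\cC)$-verticality does not rescue this: as your own chain-rule computation shows, $\psi'_S$ makes only $Df(\cC)|_S$ vertical, and because the $xy$-plane of $\psi'_S$ comes entirely from $R_i$ pushed forward by $f$, it is transverse only to $Df(\cC)$, not necessarily to the larger cone $\cC$.

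The paper avoids both issues by building a \emph{single} coordinate on $S$ from a mix of the two rectangles: the vertical fibers are the leaves of $f(\cF_i)$ (the image of $R_i$'s vertical foliation, which \emph{is} tangent to $\partial_s S$, so its leaves foliate $S$ cleanly), and the horizontal leaves are those of $\cH_j$ (the horizontal foliation of $R_j$, which is transverse to $\cC$). In this chart, $Df(\cC)$ contains the $z$-direction because $f(\cF_i)$ is tangent to $Df(\cC)$, and therefore so does $\cC\supset Df(\cC)$; and $\cC$ is transverse to the $xy$-plane because $\cH_j$ is transverse to $\cC$, and therefore so is $Df(\cC)\subset\cC$. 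One construction yields both verticality statements simultaneously, with no fibering claim to verify. Your two-chart plan would become correct if you replaced the ``vertical lines of $\varphi_j$'' by the foliation $f(\cF_i)$, i.e., if you took the vertical direction from $R_i$ (through $f$) but kept the horizontal direction from $R_j$ — that is exactly the paper's coordinate.
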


\begin{rema}
The case where the connected component of $f(R_i) \cap R_j$
fails to be a vertical sub-rectangle happens only for attracting type. 
In such a case, 
the connected component is a $C^1$-disc contained in the 
interior of $\partial_l R_j$.
\end{rema}

\begin{proof}[Proof of Proposition~\ref{p.image_easy}]
The proof of item (1) and (2) are similar so we only give the proof 
of the first one. Suppose that $R'$ is a connected component of 
$f(R_i) \cap R_j$ which is a vertical sub-rectangle of $R_j$.
We need to find  coordinates on $R'$ which guarantees that
$R'$ is a $\cC$-vertical sub-rectangle or $Df(\cC)$-vertical sub-rectangle.
Let us find such coordinates. 

Since $R_i$ is a $\cC$-vertical rectangle, 
we can a the one-dimensional foliation $\cF_i$ tangent to $\cC$
on $R_i$ by taking the image of the $z$-direction.
Also, since $R_j$ is a $\cC$-vertical rectangle, 
we can take the two-dimensional foliation $\cH_j$ which is transverse to $\cC$
on $R_j$ by taking the image of the $xy$-plane.

Then, we consider the foliation $f(\cF_i)$ and $\cH_j$ on $R'$.
Since $\cC$ is $f$-invariant, we can see that 
$f(\cF_i)$ and $\cH_j$ are transverse at each point. 
Now we take a coordinate which sends $f(\cF_i)$ parallel to the $z$-direction,
$\cH_j$ parallel to the $xy$-direction and $R'$ to 
$\mathbb{D}^2 \times I$. In this coordinates, one can check that 
$Df(\cC)$ and (in particular) $\cC$ contain the $z$-direction and $\cC$ 
(and therefore $Df(\cC)$) does not 
contain any vector in the $xy$-direction.
This concludes that the sub-rectangle $R'$ is both $\mathcal{C}$- and $Df(\cC)$-vertical. 
\end{proof}

By a similar argument and taking the attracting case into consideration, 
we can prove the following:
\begin{prop}\label{p.image}
\begin{enumerate}
\renewcommand{\labelenumi}{(\arabic{enumi})}
 \item Let $\tilde R_i\subset R_i$ be a $\cC$-vertical sub-rectangle.  
 Then for every $j$, each 
 connected component $\Ga$ of 
 $f(R_i)\cap R_j$ contains exactly one connected 
 component  $\tilde \Ga$ of $f(\tilde R_i)\cap R_j$. 
 
 Notice that $\Ga$ is either a $\cC$-vertical sub-rectangle 
of $R_j$ or is contained in the interior of the 
 lid boundary $ \partial_l(R_i)$. 
In the first case,  $\tilde \Ga$ is $\cC$- and $Df(\cC)$-vertical sub-rectangle of $R_j$.  
\item Let $\hat R_j\subset R_j$ be a $\cC$-horizontal sub-rectangle.  Then for every $i$, each 
connected component 
$\De$ of $f^{-1}(R_j)\cap R_i$ contains exactly one connected component  $\hat \De$ of 
$f^{-1}(\tilde R_j)\cap R_i$.  Furthermore, if $\De$ is a horizontal sub-rectangle of 
$R_j$ then $\hat \De$ is a $\cC$- and $Df^{-1}(\cC)$-horizontal sub-rectangle of $R_j$. 
\end{enumerate} 
\end{prop}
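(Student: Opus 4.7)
I would prove item~(1); item~(2) follows by the symmetric argument applied to $f^{-1}$. The strategy is to reduce to Proposition~\ref{p.image_easy} by exploiting the transverse foliated structure of each $R_i$ together with the invariance of the cone field $\cC$.

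Fix $i,j$ and a connected component $\Ga$ of $f(R_i)\cap R_j$. By the Markov property, $\sigma:=f^{-1}(\Ga)\cap R_i$ is a single connected component of $f^{-1}(R_j)\cap R_i$, and by Proposition~\ref{p.image_easy}(2) either $\sigma$ is a $\cC$-horizontal sub-rectangle of $R_i$ (the non-degenerate case, corresponding to $\Ga$ being $\cC$-vertical in $R_j$) or, in the attracting type, $\sigma$ is a single component of $\partial_l(R_i)$. On $R_i$ I would work with two transverse foliations: the one-dimensional foliation $\cF^u$ whose leaves are the vertical fibers $\{p\}\times[0,1]$ of the cylinder $R_i\simeq\DD^2\times[0,1]$, and the two-dimensional horizontal foliation $\cF^{cs}$ whose leaves are the discs $\DD^2\times\{t\}$. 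Since $\tilde R_i$ is $\cC$-vertical and $\cC$ contains the direction of $\cF^u$, every $\cF^u$-leaf meeting $\tilde R_i$ enters one component of $\partial_l(R_i)$ and exits through the other, so $\tilde R_i$ is saturated by $\cF^u$-fibers over a connected sub-disc of the horizontal base. Dually, $\sigma$ is saturated by $\cF^{cs}$-plaques over a sub-arc of $[0,1]$. The intersection $\kappa:=\tilde R_i\cap\sigma$ is thus a connected rectangle which is simultaneously a $\cC$-horizontal sub-rectangle of $\tilde R_i$ and a $\cC$-vertical sub-rectangle of $\sigma$.

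Taking images, $f(\tilde R_i)\cap\Ga=f(\kappa)$ is connected; since distinct components of $f(R_i)\cap R_j$ are pairwise disjoint and contain all of $f(\tilde R_i)\cap R_j\subset f(R_i)\cap R_j$, this is exactly one connected component $\tilde\Ga$ of $f(\tilde R_i)\cap R_j$ inside $\Ga$. The degenerate attracting case is handled identically: $\kappa$ is then the single disc of $\partial_l(\tilde R_i)$ lying in $\sigma$, and $f(\kappa)$ is again the unique $\tilde\Ga$.

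For the verticality claim in the non-degenerate case I would reproduce, on the smaller scale, the foliation argument of the proof of Proposition~\ref{p.image_easy}. The image foliation $f(\cF^u|_\kappa)$ is a sub-foliation of $f(\cF^u)|_\Ga$; it is tangent at every point to $Df(\cC)$ and, by strict invariance of $\cC$, its leaves are also tangent to $\cC$. The horizontal foliation of $R_j$, restricted to $\tilde\Ga$, gives a two-dimensional foliation transverse to both $\cC$ and $Df(\cC)$. Straightening these two foliations provides coordinates on $\tilde\Ga$ in which the vertical direction lies in $\cC\cap Df(\cC)$ and the horizontal plane field avoids both cones, exhibiting $\tilde\Ga$ as a $\cC$-vertical and $Df(\cC)$-vertical sub-rectangle of $R_j$.

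The main obstacle I expect is the first, topological step: verifying that $\kappa=\tilde R_i\cap\sigma$ really is a connected product rectangle with the correct two-sided sub-rectangle structure. This relies on the compatibility of the transverse pair $(\cF^u,\cF^{cs})$ with both the vertical structure of $\tilde R_i$ and the horizontal structure of $\sigma$, which in turn comes from the strict invariance of $\cC$ and the fact (used in Proposition~\ref{p.image_easy}) that the $\cF^u$-direction is preserved up to $Df(\cC)\subset\cC$. Once the product structure of $\kappa$ is in hand, the remainder of the proof is an almost verbatim transcription of the argument used for Proposition~\ref{p.image_easy}.
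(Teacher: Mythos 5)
Your overall strategy is sound and does parallel the approach the paper uses for Proposition~\ref{p.image_easy} (build transverse foliations, show the intersection is a product, then straighten to get the verticality of $\tilde\Ga$). The reduction $\sigma=f^{-1}(\Ga)\cap R_i$, the treatment of the degenerate attracting case, and the final straightening argument for $\cC$- and $Df(\cC)$-verticality are all in order, and handling item~(2) by applying the same argument to $f^{-1}$ is correct.

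However, the central topological step has a gap that you yourself flag but do not actually close. You claim that, because $\tilde R_i$ is $\cC$-vertical and the $\cF^u$-fibers $\{p\}\times[0,1]$ of the \emph{fixed} parametrization of $R_i$ are tangent to $\cC$, the set $\tilde R_i$ is saturated by $\cF^u$-fibers. This inference is false. A $\cC$-vertical sub-rectangle need not be a union of the vertical fibers of the ambient parametrization: the only requirement is that $\cC|_{\tilde R_i}$ be vertical in \emph{some} parametrization of $\tilde R_i$. For instance, with $\cC=\{|v_3|\ge 2\sqrt{v_1^2+v_2^2}\}$ on $R_i=\DD^2\times[0,1]$, the ``tilted'' cylinder $\tilde R_i=\{(x-0.2z)^2+y^2\le 0.25\}$ is a $\cC$-vertical sub-rectangle, yet the fiber $\{(0.6,0)\}\times[0,1]$ meets $\tilde R_i$ in a proper sub-interval. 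The same objection applies to the dual claim that $\sigma$ is saturated by the horizontal plaques $\DD^2\times\{t\}$. Consequently the premise underlying ``$\kappa=\tilde R_i\cap\sigma$ is a product rectangle'' is unproved, and your proposed justification via ``strict invariance of $\cC$'' does not address it (strict invariance concerns dynamics of $\cC$, not compatibility of a given sub-rectangle with a fixed product structure on $R_i$).

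The correct fix is to \emph{replace} the given parametrization of $R_i$ by an adapted one: extend the one-dimensional foliation of $\tilde R_i$ tangent to $\cC$ (coming from the parametrization that realizes $\tilde R_i$ as $\cC$-vertical) to a $\cC$-tangent line field on all of $R_i$, and likewise extend the two-dimensional $\cC$-transverse foliation of $\sigma$; since a $\cC$-tangent line and a $\cC$-transverse plane are automatically transverse, these produce a new parametrization $R_i\simeq\DD^2\times[0,1]$ in which, \emph{by construction}, $\tilde R_i$ is vertically saturated and $\sigma$ is horizontally saturated. Then $\kappa$ is a genuine product $D'\times[a,b]$ and your argument goes through verbatim. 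With that replacement the proposal is correct.

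Minor remarks: one must first observe (as the paper's Definition~\ref{d.saddle} and the repelling boundary condition $\partial_l R_i\subset\partial R$ guarantee) that in the non-degenerate case $\sigma=f^{-1}(\Ga)$ is indeed a horizontal sub-rectangle of $R_i$ before invoking Proposition~\ref{p.image_easy}(2); you use this but do not justify it. Also note the small typo in the statement you are proving: ``$f^{-1}(\tilde R_j)$'' in item~(2) should read ``$f^{-1}(\hat R_j)$'', and ``$\partial_l(R_i)$'' in item~(1) should be ``$\partial_l(R_j)$''.
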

We omit the proof of Proposition~\ref{p.image}.

\medskip

If ${\bf R} = \bigcup R_i$  is a partially hyperbolic filtrating Markov partition of $f$, we denote by 
$\{ {\bf R}\cap f({\bf R}) \}$ the family of connected components of $f(R_i)\cap R_j$
which are vertical sub-rectangles (i.e., those which have non-empty interior). 

As a direct corollary of Proposition~\ref{p.image}, we have the following:
\begin{coro}
Let ${\bf R} = \bigcup R_i$ be a partially hyperbolic filtrating Markov partition  endowed with the 
vertical unstable cone field $\cC$. 
Then ${\bf R}\cap f({\bf R})$ is a partially hyperbolic filtrating Markov partition (of the same type) for the family 
of rectangles $\{ {\bf R}\cap f({\bf R})\}$.
The cone fields $\cC$ and $Df(\cC)$ are strictly invariant
vertical unstable cone field on ${\bf R}\cap f({\bf R})$.

Similarly, ${\bf R}\cap f^{-1} ({\bf R})$ is a partially hyperbolic filtrating Markov partition (of the same type) 
with the family of rectangles $\{ {\bf {\bf R}}\cap f^{-1}({\bf {\bf R}}) \}$.
The cone fields $\cC$ and $Df^{-1}(\cC)$ are strictly invariant, vertical, unstable cone field on it.
\end{coro}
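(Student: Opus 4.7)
The plan is to verify each defining condition of a partially hyperbolic filtrating Markov partition for ${\bf R}\cap f({\bf R})$ endowed with the family of rectangles $\{{\bf R}\cap f({\bf R})\}$, and then to invoke the analogous argument for $f^{-1}$. First, I would check the filtrating character of the new set. Write ${\bf R}=A\cap R$ with $A$ attracting and $R$ repelling; the inclusion $f(A)\subset\mathrm{int}(A)$ gives $A\cap f(A)=f(A)$, which is again attracting because $f(f(A))\subset f(\mathrm{int}(A))\subset\mathrm{int}(f(A))$, and the dual inclusion $f^{-1}(R)\subset\mathrm{int}(R)$ yields $R\cap f(R)=R$. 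Hence ${\bf R}\cap f({\bf R})=f(A)\cap R$, which is filtrating (an attracting region in the attracting type, where $R=M$).

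Next I would identify the rectangles and their boundaries. By the Markov hypothesis, every connected component of $f(R_i)\cap R_j$ with non-empty interior is a vertical sub-rectangle of $R_j$; these are precisely the elements of $\{{\bf R}\cap f({\bf R})\}$, while in the attracting type the remaining components (contained in lid boundaries) carry no interior and are discarded. Pairwise disjointness of the new rectangles follows from that of the $R_j$, and the attracting-type lid-adjacency structure is inherited in the obvious way. For a new rectangle $R'\subset f(R_i)\cap R_j$, its side boundary equals $f(\partial_s R_i)\cap R_j\subset \partial f(A)$ and, in the saddle case, its lid boundary lies in $\partial_l R_j\subset \partial R$, giving the required boundary placements relative to $f(A)$ and $R$.

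The Markov condition for the refined partition is then the content of Proposition~\ref{p.image}. Let $R'\subset f(R_i)\cap R_j$ and $R''\subset f(R_k)\cap R_l$ be two new rectangles. Since $f(R')\subset f(R_j)$ and distinct original rectangles have disjoint interiors, $f(R')\cap R''$ is empty unless $j=k$. Treating $R'$ as a $\cC$-vertical sub-rectangle of $R_j$, Proposition~\ref{p.image}(1) yields that each connected component of $f(R_j)\cap R_l$ contains exactly one connected component of $f(R')\cap R_l$, and the component lying inside $R''$ is a $\cC$- and $Df(\cC)$-vertical sub-rectangle of $R''$. This is precisely the Markov condition for the new partition.

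Finally, for the cone fields: the restriction of $\cC$ to ${\bf R}\cap f({\bf R})$ remains strictly invariant and satisfies the unstable expansion condition, since these are inherited by restriction. Strict invariance of $Df(\cC)$ follows by applying $Df$ to the inclusion $Df(\cC_x)\subset\mathrm{int}(\cC_{f(x)})\cup\{0\}$ and using that $Df_x$ is a linear isomorphism, giving $Df^2(\cC_x)\subset\mathrm{int}(Df(\cC_{f(x)}))\cup\{0\}$; the uniform expansion passes to $Df(\cC)$ because, in the metric provided by Gourmelon's theorem, $\|Df(Df(w))\|>\|Df(w)\|$ for any non-zero $w\in\cC$. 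Verticality of both $\cC$ and $Df(\cC)$ on each new rectangle is exactly the content of Proposition~\ref{p.image_easy}. The dual statement for ${\bf R}\cap f^{-1}({\bf R})$ follows by the identical scheme using the second items of Propositions~\ref{p.image_easy} and~\ref{p.image}. The main bookkeeping obstacle lies in the attracting-type case, where one must consistently separate the interior-bearing components of $f(R_i)\cap R_j$ from those contained in lid boundaries, and check that the lid-adjacency between new rectangles is genuinely induced by that of the $R_j$ rather than created by the refinement.
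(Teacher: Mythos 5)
Your proof is correct and follows the same route the paper takes: the paper derives this corollary directly from Proposition~\ref{p.image} (and implicitly Proposition~\ref{p.image_easy}), which is exactly what you do, merely supplying the straightforward bookkeeping --- the identity ${\bf R}\cap f({\bf R})=f(A)\cap R$, placement of the side and lid boundaries, and the cone-field propagation --- that the paper leaves implicit.
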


Iterating this process, we also have the following.

\begin{coro} For any integers $m\leq n$, the intersection $\bigcap_{i=m}^n f^i({\bf R})$ 
is a partially filtrating Markov partition with the family of 
rectangles $\{\bigcap_{i=m}^n f^i({\bf R})\}$.
The cone field $Df^i(\cC)\,$ is strictly invariant, vertical, unstable cone field on it for every $i$ satisfying $m\leq i\leq n$. 
\end{coro}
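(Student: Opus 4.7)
The plan is to prove the corollary by induction, using the preceding corollary (about ${\bf R}\cap f({\bf R})$ and ${\bf R}\cap f^{-1}({\bf R})$) as the one-step building block. The key observation is that the previous corollary can be applied not only to the original Markov partition ${\bf R}$ with its single cone field $\cC$, but also iteratively to refined partitions that already carry several strictly invariant vertical unstable cone fields $Df^j(\cC)$ at once. Each application of the corollary adds precisely one new cone field (pushed forward or pulled back by $f$) to the list, and simultaneously refines the partition by one more iterate.

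\medskip
First, I would establish a unilateral statement: for every $n \geq 0$, the intersection $\bigcap_{i=0}^{n} f^i({\bf R})$ is a partially hyperbolic filtrating Markov partition (of the same type) for the family $\{\bigcap_{i=0}^n f^i({\bf R})\}$, and each $Df^j(\cC)$ for $0 \leq j \leq n$ is a strictly invariant vertical unstable cone field on it. The base case $n=0$ is the hypothesis. For the inductive step, assume the statement holds for some $n$. Applying the preceding corollary to the partially hyperbolic filtrating Markov partition $\bigcap_{i=0}^n f^i({\bf R})$ (equipped, say, with the cone field $Df^n(\cC)$) yields that
\[
\Bigl(\bigcap_{i=0}^n f^i({\bf R})\Bigr) \cap f\Bigl(\bigcap_{i=0}^n f^i({\bf R})\Bigr) \;=\; \bigcap_{i=0}^{n+1} f^i({\bf R})
\]
is again a partially hyperbolic filtrating Markov partition of the same type, on which both $Df^n(\cC)$ and $Df^{n+1}(\cC)$ are strictly invariant vertical unstable cone fields. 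The remaining $Df^j(\cC)$ for $0 \leq j \leq n-1$ restrict from the larger set $\bigcap_{i=0}^n f^i({\bf R})$ and remain strictly invariant vertical unstable, since the defining inequalities and the verticality of cones over cylinders are inherited by restriction to sub-rectangles (and each connected component of $\bigcap_{i=0}^{n+1}f^i({\bf R})$ is, by Proposition~\ref{p.image}, a $Df^j(\cC)$-vertical sub-rectangle of a rectangle of $\bigcap_{i=0}^n f^i({\bf R})$).

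\medskip
The symmetric argument, with $f$ replaced by $f^{-1}$ and using the second half of the preceding corollary, gives the analogous statement for negative iterates: $\bigcap_{i=-n}^{0} f^i({\bf R})$ is a partially hyperbolic filtrating Markov partition carrying $Df^j(\cC)$ for $-n \leq j \leq 0$. To assemble the two-sided statement, I would observe that for any $m \leq n$,
\[
\bigcap_{i=m}^{n} f^i({\bf R}) \;=\; \Bigl(\bigcap_{i=0}^{n} f^i({\bf R})\Bigr) \cap \Bigl(\bigcap_{i=m}^{0} f^i({\bf R})\Bigr),
\]
and perform a second induction, say on $-m$, starting from $\bigcap_{i=0}^n f^i({\bf R})$ and intersecting with $f^{-1}$ of the current set at each step; since $f^{-1}(\bigcap_{i=k}^n f^i({\bf R})) = \bigcap_{i=k-1}^{n-1} f^i({\bf R})$, the one-step intersection indeed extends the range by one negative index, and the preceding corollary (applied in its $f^{-1}$ form to the current filtrating Markov partition) produces the new cone field $Df^{k-1}(\cC)$ while preserving all previous ones.

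\medskip
The one genuinely non-routine step is checking that, when we apply the preceding corollary to a refined Markov partition that already carries several cone fields, the hypotheses (strict invariance, verticality on each cylinder, and the correct shape of each connected component of the images) all survive the refinement. This is exactly what Proposition~\ref{p.image} and the remark following Definition~\ref{d.attracting} are designed to give: they ensure both the sub-rectangle structure and the simultaneous verticality of all cone fields $Df^j(\cC)$ on each new rectangle. The bookkeeping is slightly more delicate in the attracting case, where some connected components of $f(R_j)\cap R_i$ sit in lid boundaries rather than interiors, but these components do not contribute rectangles to the refined partition and therefore do not spoil the induction.
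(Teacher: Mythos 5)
Your proposal is correct and is essentially the argument the paper intends: the paper simply writes ``Iterating this process, we also have the following'' after the one-step corollary on ${\bf R}\cap f({\bf R})$ and ${\bf R}\cap f^{-1}({\bf R})$, and your double induction (first extending the upper index via the $f$-version, then the lower index via the $f^{-1}$-version) is just the careful unpacking of that iteration, with the right bookkeeping for carrying along all the cone fields $Df^j(\cC)$ across each refinement step.
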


For any integers $m\leq n$, 
the partially hyperbolic filtrating Markov partition 
$\bigcap_{i=m}^n f^i({\bf R})$ is called a \emph{refinement} of ${\bf R}$.

\subsection{Center-stable discs and unstable lamination}

For a partially hyperbolic filtrating Markov partition 
${\bf R} =\bigcup R_i$ of $f$, we can associate two important 
dynamically defined sets, namely, the {\it unstable lamination} and
the {\it center stable discs}.
These sets have certain invariance under the 
iteration of $f$.

\begin{lemm}
\begin{itemize}
 \item The positive maximal invariant set $\La_+=\bigcap_{n\geq 0} f^n({\bf R})$ is 
a $1$-dimensional lamination whose leaves intersect each 
rectangle $R_i$ as a continuous family of $C^1$-segments 
whose tangent space is contained in the cone field $Df^n(\cC^u)$ at 
every point and for every $n\geq 0$. 

\item For the saddle type:  the negative maximal invariant set $\La_-=\bigcap_{n\leq 0} f^n({\bf R})$
is a $2$-dimensional lamination 
whose leaves are a continuous family of $C^1$-discs contained in the rectangles $R_i$, with boundary 
contained in
$\partial_s(R_i)$ and transverse to each cone field $Df^n(\cC^u)$. 
These discs are called \emph{the center-stable discs}. 

\item For the attracting  type: the partially hyperbolic filtrating 
Markov partition ${\bf R}=\bigcup R_i$ admits a 
unique positively invariant two-dimensional foliation satisfying the following properties:
\begin{itemize}
 \item each leaf is a $2$-disc transverse to each cone field $Df^n(\cC^u)$,
 \item the connected components of $\bigcup_k\partial_l(R_k)$ are leaves of this foliation,
 \item the family of discs induces on each $R_i$ a continuous family of $C^1$-discs,
 \item the family of discs is upper semi continuous for the Hausdorff distance, the discontinuity being 
 the $\partial_l R_k$. 
\end{itemize}
These discs are called \emph{the center-stable discs}.

\end{itemize}
\end{lemm}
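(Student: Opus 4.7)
The plan is to apply the refinement machinery of the preceding corollaries and then exploit the strict invariance of $\mathcal{C}^u$ (using Gourmelon's adapted metric) to extract uniform contraction transverse to the cone.

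For $\Lambda_+$, set $\mathbf{R}_n := \bigcap_{i=0}^{n} f^i(\mathbf{R})$.  By iterated application of the refinement corollary, $\mathbf{R}_n$ is again a partially hyperbolic filtrating Markov partition, whose connected components are $\mathcal{C}^u$- and $Df^i(\mathcal{C}^u)$-vertical sub-rectangles of the original rectangles $R_j$ for every $0\leq i\leq n$.  Strict invariance of $\mathcal{C}^u$ on the compact invariant set $\Lambda_+$ forces the cones $Df^n(\mathcal{C}^u)$ to shrink uniformly onto $E^u$ as $n\to\infty$, hence the horizontal cross-section (in the cylinder coordinates of the ambient $R_j$) of each connected component of $\mathbf{R}_n$ tends uniformly to zero.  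The intersection $\Lambda_+=\bigcap_n \mathbf{R}_n$ is therefore a decreasing intersection of unions of ever-thinner vertical sub-rectangles that collapses onto a $1$-dimensional subset.  Each connected component of $\Lambda_+\cap R_j$ is a $C^1$ arc running from one component of $\partial_l R_j$ to the other: it is $C^1$ as a Hausdorff limit of $C^1$ vertical curves (central sections of the sub-rectangles), and its tangent at every point lies in every $Df^n(\mathcal{C}^u)$ because at scale $n$ it sits inside a $Df^n(\mathcal{C})$-vertical sub-rectangle.  Continuity of the family follows from the $C^1$-continuous dependence of the boundary of each sub-rectangle of $\mathbf{R}_n$ on its position.

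For $\Lambda_-$ in the saddle case, apply the mirror construction to $\mathbf{R}_n^- := \bigcap_{i=-n}^{0} f^i(\mathbf{R})$.  The saddle-type hypothesis $\partial_l R_j \subset \partial R$ is precisely what guarantees that every non-empty connected component of $f^{-1}(R_j)\cap R_i$ is a genuine $\mathcal{C}^u$-horizontal sub-rectangle of $R_i$ (degenerate components inside $\partial_l R_i$, as can occur in the attracting case, are excluded).  The refinement corollaries therefore apply verbatim on the negative side, and strict invariance of the \emph{stable} cone family $Df^{-n}(\mathcal{C}^u)^c$ makes the vertical height of each component of $\mathbf{R}_n^-$ shrink uniformly, yielding in the limit a continuous family of $C^1$ $2$-discs with boundaries in $\partial_s R_i$ and transverse to every $Df^n(\mathcal{C}^u)$.

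For the attracting case, $\mathbf{R}\subset f^{-1}(\mathbf{R})$, so backward refinement gives nothing new and a forward-saturation construction is required.  The idea is to start from any initial $C^1$ foliation $\mathcal{H}_0$ of $\mathbf{R}$ by discs transverse to $\mathcal{C}^u$, with boundaries in $\partial_s R_i$ and containing the components of $\bigcup_k\partial_l R_k$ as leaves, then define $\mathcal{H}_n$ as the foliation whose leaves are obtained by pushing leaves of $\mathcal{H}_0$ forward by $f^n$ and subdividing whenever leaves hit lid boundaries (this subdivision is allowed by the adjacency condition).  Strict invariance of $\mathcal{C}^u$ makes two leaves of $\mathcal{H}_n$ sharing a common initial base exponentially close in Hausdorff distance on compact subsets of $\mathbf{R}\setminus\bigcup_k\partial_l R_k$, so $(\mathcal{H}_n)$ is Cauchy and converges to a unique limit $\mathcal{H}_\infty$ independent of $\mathcal{H}_0$.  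Positive invariance and the inclusion of the $\partial_l R_k$ as leaves are preserved at each step.  Upper semicontinuity with discontinuities exactly at $\bigcup_k\partial_l R_k$ follows from the adjacency condition: leaves approaching a common lid from within two adjacent rectangles need not agree across it.  The main obstacle I expect is precisely this attracting case: proving that any choice of $\mathcal{H}_0$ yields the same limit $\mathcal{H}_\infty$, and pinning down the discontinuity set as exactly $\bigcup_k \partial_l R_k$ with upper semicontinuity, will require a careful quantitative analysis of how $\mathcal{C}^u$-transverse discs are compressed by forward iteration and of how neighbouring leaves get separated by a lid boundary.
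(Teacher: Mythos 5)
Your treatment of $\Lambda_-$ in the saddle case follows the paper's own sketch (nested horizontal sub-rectangles whose vertical thickness shrinks because $f^{-1}$ contracts $\cC^u$-vectors), and your observation that the backward-refinement strategy is vacuous in the attracting case — since $\mathbf{R}\subset f^{-1}(\mathbf{R})$ forces $\bigcap_{i=-l}^0 f^i(\mathbf{R})=\mathbf{R}$ for all $l$ — is correct and goes beyond the printed proof, which simply says ``for the sake of simplicity we only consider saddle type'' and never returns to the attracting case.

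There are, however, two gaps. First, for $\Lambda_+$ your chain ``$Df^n(\cC^u)$ shrinks onto $E^u$, hence the horizontal cross-sections of the components of $\mathbf{R}_n$ tend to zero'' is unjustified: the cone statement is about directions, not diameters, and since the hypotheses give expansion of $\cC^u$-vectors but no transverse contraction, there is no reason for the horizontal widths of the $\mathbf{R}_n$-components to collapse. The lamination structure of $\Lambda_+$ should instead be extracted from the standard local unstable-manifold theory (each point carries a local unstable curve tangent to $\bigcap_n Df^n(\cC^u)$ and these fit together into a lamination); this is precisely what the paper labels ``classical.'' Second, and more substantively, your construction for the attracting case goes in the wrong direction. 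Pushing $\mathcal{H}_0$ forward by $f^n$ yields a foliation only of $f^n(\mathbf{R})\subsetneq\mathbf{R}$, which shrinks onto the maximal invariant set and never covers $\mathbf{R}$, and in any case $f$ \emph{expands} the $\cC^u$-transverse distance between candidate leaves (since $\cC^u$ is the expanding cone), so the sequence $(\mathcal{H}_n)$ is not Cauchy and your ``exponentially close'' claim fails. The correct graph transform pulls back: set $(\mathcal{G}\mathcal{H})_x$ to be the connected component containing $x$ of $f^{-1}\bigl(\mathcal{H}_{f(x)}\bigr)\cap\mathbf{R}$. This is defined for every $x\in\mathbf{R}$ because $f(x)\in\mathbf{R}$, and since $f^{-1}$ contracts $\cC^u$-segments by a factor $1/\mu<1$, the operator $\mathcal{G}$ is a $1/\mu$-contraction for the $\cC^u$-transverse Hausdorff distance and has a unique fixed point. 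That fixed point is the required positively invariant foliation, and your intended analysis of the lid-boundary discontinuities and upper semicontinuity via the adjacency condition then applies to it as you describe.
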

\begin{proof}[Sketch of the proof]
The existence of the one-dimensional unstable lamination is classical in 
partial hyperbolic dynamic. Let us build the center-stable discs which are the leaves of the center-stable 
foliation/lamination. For the sake of simplicity we only consider saddle type.

Let us fix one connected component $\Delta$ of $\bigcap_{n\leq 0} f^n({\bf R})$.
Using Proposition~\ref{p.image} repeatedly, we see that it is given as the 
limit of nested sequence of horizontal sub-rectangles $\{\tilde{R}_l\}$
where $\tilde{R}_l$ is the (unique) rectangle of the negative 
refinements $\bigcap_{i=-l}^0 f^i({\bf R})$ which contains $\Delta$.

As $f^{-1}$ contracts the vectors in the cone $Df(\cC^u)$, 
one can check that the thickness of 
the rectangle tends to $0$ as $l \to +\infty$. 
On the other hand, (the tangent space of) their lid 
boundary is contained in the complement cone of 
$Df^{-l}(\cC^u)$, and this complement cone tends to a plane.  
Hence, we can see that such a decreasing sequence of such rectangles
tends to a $C^1$-disc. 
The continuity of the family of discs follows by construction.
\end{proof}

Using the notion of center-stable discs, we can define the \emph{largeness} of 
stable manifolds of hyperbolic periodic points (of $s$-index $2$)
in a partially hyperbolic filtrating Markov partition.

\begin{defi}
We say that a hyperbolic periodic point $x\in R_i$ of $s$-index $2$  
has {\it large stable manifold} 
if the center-stable disc through $x$ is contained in $W^s(x)$.
\end{defi}
Notice that for a hyperbolic periodic point of $s$-index $2$ in ${\bf R}$,
having large stable manifold is a $C^1$-robust property.

The next lemma shows that the property of having 
large stable manifold is a property of the orbit.

\begin{lemm}\label{l.large} If $x \in {\bf R}$ 
is an $s$-index $2$ hyperbolic 
periodic point with large stable manifold, then every point $f^i(x)$ 
in the orbit of $x$ has large stable manifold.
\end{lemm}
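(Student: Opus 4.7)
The plan is to prove the induction step $x\mapsto f(x)$: if $x\in R_i$ has large stable manifold then so does $f(x)\in R_j$. Iterating around the periodic orbit of $x$ (of period $k$) then covers every $f^i(x)$. So I focus on showing $D_{f(x)}\subset W^s(f(x))$, where $D_{f(x)}$ is the center-stable disc through $f(x)$ inside $R_j$.

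The key geometric input is a description of each leaf of $\La_-$ by its forward itinerary. Writing $f^m(f(x))\in R_{j_m}$, iterated application of Proposition~\ref{p.image}(2) shows that the connected component $H_n$ through $f(x)$ of $R_{j_0}\cap f^{-1}(R_{j_1})\cap\cdots\cap f^{-n}(R_{j_n})$ is a $\cC$-horizontal sub-rectangle of $R_{j_0}=R_j$. By strict invariance of $\cC^u$, the ``thickness'' of $H_n$ transverse to the center-stable tangent planes shrinks to zero as $n\to\infty$, so $\bigcap_n H_n$ is exactly the leaf $D_{f(x)}$. In particular every $y\in D_{f(x)}$ has the same forward itinerary as $f(x)$: $f^m(y)\in R_{j_m}$ for all $m\geq 0$. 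Since $f^{k-1}(f(x))=f^k(x)=x\in R_i$, we have $j_{k-1}=i$ and hence $f^{k-1}(D_{f(x)})\subset R_i$. Combined with the forward invariance $f(\La_-)\subset\La_-$, and with the fact that $f$ sends center-stable leaves into center-stable leaves (again by the itinerary description), $f^{k-1}(D_{f(x)})$ is a connected $C^1$-disc through $x$ contained in a single leaf of $\La_-\cap R_i$; that leaf must be $D_x$. The hypothesis $D_x\subset W^s(x)$ then gives $f^{k-1}(y)\in W^s(x)$ for every $y\in D_{f(x)}$, and using $x=f^{k-1}(f(x))$,
$$d\bigl(f^{n+k-1}(y),f^{n+k-1}(f(x))\bigr)=d\bigl(f^n(f^{k-1}(y)),f^n(x)\bigr)\to 0$$
as $n\to\infty$, so $y\in W^s(f(x))$. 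This yields $D_{f(x)}\subset W^s(f(x))$ and closes the induction.

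The main obstacle is establishing the itinerary constancy of the leaves of $\La_-$, which is clean in the saddle case but requires additional care in the attracting type: there the center-stable structure is a foliation including the components of $\bigcup_k\partial_l R_k$ as degenerate leaves, and one must verify that $D_{f(x)}$ (and its iterates) do not coincide with such a degenerate leaf. This is ensured by the hyperbolicity of $x$ together with the large stable manifold assumption, which force $D_{f(x)}$ into the generic stratum of the foliation where the nested horizontal sub-rectangle description remains valid.
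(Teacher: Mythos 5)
Your proof is correct and establishes the result, but it travels in the opposite direction from the paper's. The paper's argument (which is only sketched) works \emph{backward} and in a single step: it notes that each connected component of $f^{-1}(D)\cap{\bf R}$ is itself a center-stable disc, so the center-stable disc through $f^{-1}(x)$ sits inside $f^{-1}(D_x)\subset f^{-1}(W^s(x))=W^s(f^{-1}(x))$, and then inducts over the orbit. You go \emph{forward} around the full period: $f(\La_-)\subset\La_-$ and leaf-to-leaf invariance give $f^{k-1}(D_{f(x)})\subset D_x$, and pulling $W^s(x)$ back by $f^{-(k-1)}$ gives $D_{f(x)}\subset W^s(f(x))$. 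Both arguments rest on the same structural fact --- that the center-stable lamination/foliation is respected under iteration --- but going forward forces you to traverse the entire period $k$ before you can invoke the hypothesis, whereas going backward closes in one application of the hypothesis. Your itinerary description of $D_{f(x)}$ as $\bigcap_n H_n$ is a reasonable warm-up but is not strictly needed for the conclusion; once you know $f$ sends leaves into leaves, $f^{k-1}(D_{f(x)})\subset D_x$ already follows.

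Your closing caveat about the attracting type is misdirected in two ways. First, the hypotheses do not exclude $x$ (and hence every $f^i(x)$) from lying on a component of $\bigcup_k\partial_l R_k$: hyperbolicity and largeness of the stable manifold are compatible with $D_x$ being a lid-boundary leaf, so "forcing $D_{f(x)}$ into the generic stratum'' is not justified as stated. Second, and more to the point, you do not need to exclude that case: for the attracting type the center-stable structure is a positively invariant foliation whose leaves are exactly the discs you use, boundary components included, and $f$ sends leaves into leaves in either stratum. So $f^{k-1}(D_{f(x)})\subset D_x$ and the rest of your computation go through unchanged; only the "nested horizontal sub-rectangle'' description should be dropped for the boundary leaves, replaced by a direct appeal to the positive invariance of the foliation.
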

\begin{proof} Just notice that, for any center-stable disc $D$, each connected component of  
$f^{-1}(D)\cap {\bf R}$ is a center-stable disc. 
Then we can see that the stable disc through 
$f^{-1}(x)$ is contained in $W^s(f^{-1}(x))$. 
Now we can get the conclusion by a simple inductive argument. 
\end{proof}

The following lemma says that, in a hyperbolic basic set,  
the largeness of a periodic point is inherited to the periodic 
points passing nearby.

\begin{lemm}\label{l.large2}  Let $K\subset {\bf R}$ 
be a hyperbolic basic set of $s$-index $2$. 
Assume that $x\in K$ is a periodic point with large stable manifold.
Then, there is a neighborhood $U$ of $x$ and a $C^1$-neighborhood 
$\cU$ of $f$ such that the following holds:
for every $g\in\cU$ and every periodic 
point $y\in K_g$ whose orbit $\cO(y,g)$ meets $U$, 
the stable manifold through $y$ is large. 
\end{lemm}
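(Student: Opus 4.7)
The plan is to reduce to the case where $y$ itself lies in $U$ via Lemma~\ref{l.large}, and then to push the center-stable disc through $y$ into the basin of $y$ for $g^{\tau}$, combining the largeness hypothesis at $x$ with the continuity of the center-stable foliation in both the base point and the diffeomorphism. Write $D_{x,f}$ (resp.\ $D_{y,g}$) for the center-stable disc through $x$ for $f$ (resp.\ through $y$ for $g$).

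Let $\tau$ be the $f$-period of $x$. Since $D_{x,f}\subset W^{s}(x,f)$ and $D_{x,f}$ is compact, $f^{\tau}$ acts on $D_{x,f}$ as a contraction with $x$ as global attractor, so for any neighborhood $V$ of $x$ there exists $N_{0}$ with $f^{N_{0}\tau}(D_{x,f})\subset V$. I would then apply the classical stable manifold theorem uniformly on $K$ to fix $\rho>0$ such that, for every $g$ in a $C^{1}$-neighborhood $\cU_{0}$ of $f$ and every $y\in K_{g}$, the set $B(y,\rho)\cap D_{y,g}$ is contained in the basin of $y$ for $g^{\tau}$ restricted to $D_{y,g}$ (here I use that $D_{y,g}$ is tangent at $y$ to $E^{cs}(y,g)=E^{s}(y,g)$, a direction in which $g^{\tau}$ is uniformly contracting). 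Setting $U=B(x,\rho/2)$, for every $y\in U$ the intersection $B(x,\rho/2)\cap D_{y,g}$ then lies in this basin.

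Next I would pick $N_{0}$ so that $f^{N_{0}\tau}(D_{x,f})\subset B(x,\rho/4)$, and invoke the continuity of the center-stable foliation in $(y,g)$ together with the $C^{0}$-continuity of $g^{N_{0}\tau}$ in $g$ to conclude that $g^{N_{0}\tau}(D_{y,g})\subset B(x,\rho/2)$ for $g$ sufficiently $C^{1}$-close to $f$ and $y\in U$. Since $g^{\tau}(D_{y,g})\subset D_{y,g}$, this gives $g^{N_{0}\tau}(D_{y,g})\subset D_{y,g}\cap B(x,\rho/2)$, which by the previous step lies in the basin of $y$. Hence every point of $D_{y,g}$ converges to $y$ under iteration by $g^{\tau}$, so $D_{y,g}\subset W^{s}(y,g)$, that is, $y$ has large stable manifold. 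Lemma~\ref{l.large} then upgrades this from $y\in U$ to every periodic point whose orbit meets $U$.

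The main obstacle I foresee is the continuity of the center-stable foliation under $C^{1}$-perturbations of $f$. Continuity in $y\in R_{i}$ for fixed $f$ is already part of the existence statement for center-stable discs. Continuity in $g$ requires revisiting that construction: at each finite depth $n$ the horizontal sub-rectangles forming $\bigcap_{i=-n}^{0}g^{i}({\bf R})$ vary $C^{1}$-continuously with $g$ in the Hausdorff sense, while the strict $C^{1}$-invariance of the unstable cone field forces their thickness in the center-stable direction to contract uniformly and geometrically to zero; passing to the limit gives the required Hausdorff continuity of $D_{y,g}$ in $(y,g)$.
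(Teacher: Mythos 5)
Your approach is genuinely related to the paper's — you unpack the terse "local stable manifolds vary continuously" argument into an explicit two-step iteration — but it contains an unjustified inclusion that can fail.

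The problematic step is \emph{``Since $g^{\tau}(D_{y,g})\subset D_{y,g}$''}. In fact $g^{\tau}(D_{y,g})\subset D_{g^{\tau}(y),g}$, and one has $D_{g^{\tau}(y),g}=D_{y,g}$ only when $g^{\tau}(y)$ lies on the same center-stable disc as $y$. The lemma allows $y$ to be \emph{any} periodic point of $K_g$ whose orbit meets $U$: its $g$-period $\sigma$ need not equal (or divide) $\tau$, and the period of the disc $D_{y,g}$ need not divide $\tau$ either. For instance, if $x$ is a fixed point and $y$ is a nearby periodic point of large period whose forward itinerary in the Markov partition agrees with that of $x$ only for an initial block, then $D_{g(y),g}$ is a disc close to but distinct from $D_{y,g}$, so already $g(D_{y,g})\not\subset D_{y,g}$. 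Your phrase ``basin of $y$ for $g^{\tau}$ restricted to $D_{y,g}$'' presupposes exactly this unjustified forward-invariance of $D_{y,g}$ under $g^{\tau}$.

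The repair is small and keeps your structure intact. Since $y\in D_{y,g}$, the inclusion $g^{N_{0}\tau}(D_{y,g})\subset B(x,\rho/2)$ automatically gives $g^{N_{0}\tau}(y)\in B(x,\rho/2)$, with $g^{N_{0}\tau}(y)\in K_g$. Therefore
\[
g^{N_{0}\tau}(D_{y,g})\subset B\bigl(g^{N_{0}\tau}(y),\rho\bigr)\cap D_{g^{N_{0}\tau}(y),g},
\]
and your uniform local-stable-manifold estimate — applied at the base point $g^{N_{0}\tau}(y)$ rather than at $y$ — places the right-hand side inside $W^{s}_{\mathrm{loc}}\bigl(g^{N_{0}\tau}(y),g\bigr)$. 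Since $g^{N_{0}\tau}(y)$ and $y$ lie on the same orbit, $W^{s}\bigl(g^{N_{0}\tau}(y),g\bigr)=W^{s}(y,g)$ up to iterating by $g$, hence $D_{y,g}\subset W^{s}(y,g)$. No inclusion $g^{\tau}(D_{y,g})\subset D_{y,g}$ is needed. Once corrected, your argument is a valid and somewhat more quantitative alternative to the paper's direct appeal to the continuity of compact pieces of stable manifolds in $(y,g)$.
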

\begin{proof}
 The lemma follows from the fact that local stable manifolds of points in a hyperbolic 
 set vary continuously with the point and with the diffeomorphism.
 Therefore, for $g$ close to $f$ and $y\in K_g$ close enough to $x$, 
 the stable manifold $W^s(y,g)$ 
 contains the whole  center-stable disc which passes through $y$. 
 Since the property of having large stable manifold 
 is invariant under the iteration 
 (for periodic points), the property that the orbit 
 of $y$ passes close enough to $x$ ensures the largeness of 
 the stable manifold through $y$. 
\end{proof}

\subsection{Topology of Markov partitions in the large stable manifolds and 
expulsion of periodic points from chain recurrence classes}
Now we are ready to describe the ``topology of a chain recurrence class''
mentioned in the  introduction. 
It plays an important role to determine the 
possibility of escaping from a chain recurrence class.

In this subsection, $D$ denotes a periodic center-stable disc of \emph{period} $\pi(D)$
(that is,  $D$ satisfies $f^{\pi(D)}(D) \subset D$
and $\pi(D)$ is the smallest positive integer satisfying this condition)
in a partially hyperbolic filtrating Markov partition ${\bf R}=\bigcup R_i$
such that the first return map $f^{\pi(D)}|_{D}$ preserves the orientation. 
By $\lambda_D$  we denote the 
forward maximal invariant set in $D$, that is, we put
$\lambda_D :=\bigcap_{n\geq 0} f^{n\pi(D)}(D)$.

\begin{lemm}\label{l.TD}
Under the above setting, consider the orbit space 
of $D \setminus \lambda_D$. 
Then it is naturally diffeomorphic to the two dimensional torus. 
We denote it by $\TT_D$.
\end{lemm}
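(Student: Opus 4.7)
The plan is to identify a closed annular fundamental domain in $D$ for the first return map $g := f^{\pi(D)}|_D$, and then to recognize $\mathbb{T}_D$ as the result of gluing its two boundary circles via $g$.

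First, I would verify that $g(D) \subset \mathrm{int}(D)$, where the interior is taken in $D$ viewed as a $2$-disc (so $\mathrm{int}(D) = D \setminus \partial D$). Since $\partial D$ is contained in the side boundary $\partial_s R_i \subset \partial A$ and $f(\partial A) \subset \mathrm{int}(A)$, we have $g(\partial D) \subset \mathrm{int}(A)$; combined with $g(\partial D) \subset g(D) \subset D \subset R_i$, this forces $g(\partial D) \cap \partial D = \emptyset$. Because $g$ is an orientation-preserving diffeomorphism of $D$ into its interior, Jordan--Schoenflies implies that the smooth simple closed curve $g(\partial D)$ bounds a disc in $\mathrm{int}(D)$, which must coincide with $g(D)$. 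Hence $F := D \setminus \mathrm{int}(g(D))$ is a closed topological annulus whose two boundary circles are $\partial D$ and $g(\partial D)$.

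Next, I would check that $F$ is a fundamental domain for $g$ on $D \setminus \lambda_D$ in the precise sense that each $g$-orbit in $D \setminus \lambda_D$ meets $F$ either in exactly one interior point, or in exactly two boundary points, one on each component, identified via $g$. Indeed, for $x \in D \setminus \lambda_D$ there exists a largest $k \geq 0$ such that $g^{-k}(x) \in D$ (otherwise the full backward orbit of $x$ lies in $D$, which combined with forward invariance forces $x \in \bigcap_{n\geq 0} g^n(D) = \lambda_D$), and this $g^{-k}(x)$ lies in $D \setminus g(D) \subset F$. All further applications of $g$ push the orbit inside $g(D)$, so the only repeated visits to $F$ occur through the identification of $\partial D$ with $g(\partial D)$ under $g$.

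Consequently, $\mathbb{T}_D$ is naturally homeomorphic to the quotient $F/\!\sim$, where $\sim$ glues $\partial D$ to $g(\partial D)$ via the diffeomorphism $g|_{\partial D}$. Because $g$ preserves the orientation of $D$, the two oriented circles $\partial D = \partial(D)$ and $g(\partial D) = \partial(g(D))$ carry orientations that are exchanged by $g$ in an orientation-preserving way, hence inducing opposite orientations as the boundary of the annulus $F$. This is exactly the gluing that produces an orientable closed surface, so the quotient is a closed orientable surface of Euler characteristic $0$, i.e., a torus; the smooth structure is inherited from that of $F$ via the diffeomorphism $g|_{\partial D}$. The point requiring the most care is the orientation compatibility of the gluing, but this is guaranteed precisely by the hypothesis that $f^{\pi(D)}|_D$ is orientation-preserving.
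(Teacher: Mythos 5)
Your proof is correct and takes essentially the same approach as the paper's sketch, which also constructs $\TT_D$ as a closed annulus bounded by a curve $\gamma$ and $f^{\pi(D)}(\gamma)$ with the two boundary circles glued via $f^{\pi(D)}$. You make the natural concrete choice $\gamma=\partial D$ and fill in the details the paper leaves implicit (Jordan--Schoenflies to see $D\setminus \mathrm{int}\bigl(f^{\pi(D)}(D)\bigr)$ is an annulus, the fundamental-domain check, and the orientation/Euler-characteristic argument), all of which are sound.
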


\begin{proof}[Sketch of the proof]
One can see this orbit space as follows: consider 
a fundamental domain of $f^{\pi(D)}$ which is a 
annulus bounded by a smooth curve $\gamma$ and $f^{\pi(D)}(\gamma)$. 
Then the quotient space is obtained from this annulus
by gluing $\gamma$ to $f^{\pi(D)}(\gamma)$ along $f^{\pi(D)}$.  
\end{proof} 

In this orbit space $\TT_D$, there is a special (homotopy class of a) curve.

\begin{defi}\label{d.para}
The closed curve $\partial D$ induces a well-defined
homotopy class in $\pi_1(\TT_D)$.
We call it the \emph{parallel} of $\TT_D$. 
\end{defi}
Note that any curve in $D\setminus \lambda_D$ joining a point
$y \in (D\setminus \lambda_D) $and $f^{\pi(D)}(y)$  projects to a 
closed curve in $\TT_D$  whose (topological) intersection number with 
the parallel is equal to $1$.

\begin{lemm}\label{l.obstacle}
Under the above setting,
$D\cap f^{\pi(D)}({\bf R})$ consists of the union of finitely many 
pairwise disjoint discs, one of them being $f^{\pi(D)}(D)$.
\end{lemm}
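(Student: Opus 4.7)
The plan is to decompose $D \cap f^{\pi(D)}({\bf R})$ along the connected components of $f^{\pi(D)}({\bf R}) \cap R_{i_0}$ (where $R_{i_0}$ is the rectangle containing $D$), argue via transversality that each component cuts $D$ in a disc, and then identify $f^{\pi(D)}(D)$ with one of these discs via a uniqueness-of-sections argument.

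Since $D$ is a center-stable disc, it lies in a single rectangle $R_{i_0}$, giving
\[
D \cap f^{\pi(D)}({\bf R}) \;=\; \bigsqcup_k \bigl(D \cap V_k\bigr),
\]
where $\{V_k\}$ are the connected components of $f^{\pi(D)}({\bf R}) \cap R_{i_0}$. By the Markov axioms and Proposition~\ref{p.image_easy}, each $V_k$ is either a $\cC^u$-vertical sub-rectangle of $R_{i_0}$ or, in the attracting case, a lid disc contained in $\partial_l R_{i_0}$. A center-stable disc $D$ is transverse to $\cC^u$ with $\partial D \subset \partial_s R_{i_0}$ and interior disjoint from $\partial_l R_{i_0}$. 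Therefore $D \cap V_k$ is empty for lid components, and for a vertical $V_k$ it is a horizontal section of $V_k$ over its base disc---hence a topological disc. Since $D$ avoids $\partial_l R_{i_0}$, the discs $D \cap V_k$ are pairwise disjoint, and there are only finitely many because ${\bf R}$ has finitely many rectangles.

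To identify $f^{\pi(D)}(D)$ with one of these discs, observe that $f^{\pi(D)}(D)$ is connected and contained in $D \cap f^{\pi(D)}({\bf R})$, so it lies in $D \cap V_0$ for a unique vertical sub-rectangle $V_0$. Let $H_0 := f^{-\pi(D)}(V_0) \cap R_{i_0}$ be the corresponding horizontal sub-rectangle; the Markov property makes $f^{\pi(D)}|_{H_0} \colon H_0 \to V_0$ a homeomorphism that sends $\partial_s H_0 \subset \partial_s R_{i_0}$ onto $\partial_s V_0$. Since $f^{\pi(D)}(D) \subset V_0$ forces $D \subset H_0$, the image $f^{\pi(D)}(D)$ is a horizontal disc in $V_0$ with $\partial f^{\pi(D)}(D) = f^{\pi(D)}(\partial D) \subset \partial_s V_0$. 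The disc $D \cap V_0$ is also horizontal in $V_0$ with boundary on $\partial_s V_0$ (any boundary on $\partial_l V_0 \subset \partial_l R_{i_0}$ is forbidden by the avoidance above).

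The proof concludes via a topological uniqueness statement: a horizontal disc in $V_0$ transverse to the vertical cone field and with boundary a single essential loop on the annulus $\partial_s V_0$ is a full graph-section of the vertical projection $V_0 \to \DD^2$, and two such sections with one contained in the other must coincide. Applied to $f^{\pi(D)}(D) \subset D \cap V_0$ this gives $f^{\pi(D)}(D) = D \cap V_0$ and completes the proof. The main delicate point will be verifying that $f^{\pi(D)}(D)$ is truly a full cross-section of $V_0$ rather than a proper sub-disc of $D \cap V_0$; this rests on the Markov property placing $D$ entirely inside $H_0$, so that $f^{\pi(D)}$ pushes $\partial D$ around the whole annular side $\partial_s V_0$.
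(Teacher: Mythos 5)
Your decomposition is the same as the paper's: cut $D\cap f^{\pi(D)}({\bf R})$ along the connected components of $f^{\pi(D)}({\bf R})\cap R_{i_0}$ and use the refinement structure (the paper invokes Proposition~\ref{p.image} iteratively; you should too, rather than only the one-step Proposition~\ref{p.image_easy}) to see that each component is a vertical sub-rectangle or a lid disc. You go further than the paper on the clause ``one of them being $f^{\pi(D)}(D)$'': the paper's proof does not argue this at all, whereas your full-cross-section uniqueness argument (both $f^{\pi(D)}(D)$ and $D\cap V_0$ are graph sections of $V_0$ over $\DD^2$, one contained in the other, hence equal) actually supplies the missing verification, and it is sound once one checks, as you do, that $f^{\pi(D)}(\partial D)\subset\partial_s V_0$.

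Two small points. First, you assert that $D$ has interior disjoint from $\partial_l R_{i_0}$ and hence $D\cap V_k=\emptyset$ for every lid-disc component $V_k$; this fails in the attracting type when $D$ is itself a lid boundary (lid boundaries \emph{are} leaves of the center-stable foliation, and such a $D$ lies in \emph{two} rectangles). The paper sidesteps this by explicitly allowing $R_D$ to be non-unique and ``choosing one of them''; your argument should either restrict attention to $D$ in the interior of a rectangle (which is the case actually used later) or handle the lid-disc case, where the lid components are then themselves the discs of the conclusion. Second, the pairwise disjointness of the $D\cap V_k$ is automatic because the $V_k$ are distinct connected components; the appeal to ``$D$ avoids $\partial_l R_{i_0}$'' is not what makes them disjoint.
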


\begin{proof}[Sketch of the Proof]
Let $R_D$ be  a rectangle which contains $D$.
For the saddle type $R_D$ is uniquely determined. For the 
attracting case there may be two such rectangles. We  choose one of them.
Then, by applying Proposition~\ref{p.image} repeatedly,
we can see that each connected component of
$f^{\pi(D)}({\bf R})\cap R_D$ is a $\cC$-vertical sub-rectangle of $R_D$
or a disc contained in a lid boundary.
As a result, we can see that the number of connected components of
$D\cap f^{\pi(D)}({\bf R})$ is finite and
that each connected component is a $C^1$-disc in $D$.
\end{proof}

We denote by $\De_D$ the projection of 
$\left(D\cap f^{\pi(D)}({\bf R})\right)\setminus f^{\pi(D)}(D)$
to $\TT_D$.  
It consists of finitely many pairwise disjoint discs embedded in the torus $\TT_D$. 
Suppose that we have a hyperbolic periodic point $x \in {\bf R}$ 
of $s$-index $1$. 
We are interested in deciding if the chain recurrence class of $x$ is trivial or not. 
To see this, it is important to compare the set $\De_D$ and the projection of 
the stable manifold of $x$ in $\TT_D$.

First, let us have the following observation:

\begin{rema}
Let $x\in D$ be a hyperbolic periodic point of period $\pi(D)=\pi(x)$ and with $s$-index $1$.   
Assume that $W^s(x)\cap \lambda_D=\{x\}$.  
Then for the punctured stable manifold
$W^s(x)\setminus \{x\}$ projected on $\TT_D$ 
we have one of the following:
\begin{itemize}
 \item Either it is a union of two 
 disjoint simple closed curves such that 
 for each curve the intersection number 
 with the parallel is equal to $1$
 (this case happens when the stable eigenvalue of $x$ is positive);
 \item or, one simple closed curve whose 
 intersection number with the parallel is $2$
  (this case happens when the stable eigenvalue of $x$ is negative). 

\end{itemize}
\end{rema}

%
%
%

Lemma~\ref{l.obstacle} shows the way for ensuring the triviality of 
a chain recurrence class of a periodic point in ${\bf R}$. 
 \begin{prop}\label{p.obstacle}  
Assume that $x\in D$ is  
an $s$-index $1$ hyperbolic periodic point 
with $\pi(x)=\pi(D)$ such that the projection 
of  
$W^s(x)\setminus \{x\}$ to $\TT_D$ is a disjoint union
of two simple closed curves having empty intersection with $\De_D$. 
Then the chain recurrence class of $x$ is trivial (i.e.,
 is equal to the orbit of $x$). 
\end{prop}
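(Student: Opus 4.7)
The plan is to build, for every $\eta > 0$, a filtrating neighborhood $U \supset \cO(x)$ whose Hausdorff distance to $\cO(x)$ is at most $\eta$. Since any chain recurrence class meeting a filtrating set is contained in it, this will force $C(x) = \cO(x)$.

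Setting $F := f^{\pi(D)}$, I first work inside $D$. The two projections $c_1, c_2 \subset \TT_D$ of the branches of $W^s(x) \setminus \{x\}$ are disjoint simple closed curves, each of intersection number $1$ with the parallel; hence they are isotopic in $\TT_D$ and cut it into two complementary annuli. The disjointness from $\Delta_D$ lets me choose disjoint closed annular neighborhoods $\bar N_1 \supset c_1$ and $\bar N_2 \supset c_2$ in $\TT_D$ still avoiding $\Delta_D$, of arbitrarily small width. Pulling back $\bar N_1 \cup \bar N_2$ via the quotient $D \setminus \lambda_D \to \TT_D$ and adjoining a small closed disc $\bar B \subset D$ around $x$ contained in the component $f^{\pi(D)}(D)$ produces a closed neighborhood $V \subset D$ of the local stable manifold of $x$, of arbitrarily small transverse diameter, and disjoint from every component of $D \cap f^{\pi(D)}({\bf R})$ other than $f^{\pi(D)}(D)$.

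Next, using the strict invariance of the vertical unstable cone field $\cC^u$, I thicken $V$ to a three-dimensional cylindrical set $U_0 \subset R_{i_0}$ in the rectangle $R_{i_0} \ni x$ by saturating along the vertical strong-unstable fibres. Forming $\tilde U := \bigcup_{j=0}^{\pi(x)-1} f^j(U_0)$ and then the compact set $U := \tilde U \cap \bigcap_{j=-N}^{N} f^j({\bf R})$ (which sits inside a refined partially hyperbolic filtrating Markov partition, by Section~\ref{ss.partialhyp}) yields a neighborhood of $\cO(x)$ whose Hausdorff distance to $\cO(x)$ tends to zero as $\bar N_1, \bar N_2, \bar B$ shrink and $N$ grows.

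The main task, which I expect to be the principal obstacle, is to verify that $U$ is a filtrating set, i.e., $U = A \cap R$ with $A$ an attracting region and $R$ a repelling region. I would construct $A$ by forward-saturating $U$ through the unstable-exit part of its boundary (the lid faces of the cylinder $U_0$) and $R$ by backward-saturating $U$ through the stable-exit part (the thickening along $W^s(x)$). The crucial step is to show that these saturations do not fold back into $U$ through ``foreign'' returns: any forward orbit that leaves $U$ and later re-enters must cross $D$ through some component of $D \cap f^{\pi(D)}({\bf R})$; the components projecting to $\Delta_D$ are disjoint from $V$ by our topological construction, hence cannot route such an orbit back into $U$, while re-entry through $f^{\pi(D)}(D)$ is governed by the local saddle dynamics of $F$ at $x$ inside $\bar B$ and can only carry $\cO(x)$ itself. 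The same argument with $f^{-1}$ controls the backward saturation. Once $U$ is shown to be filtrating, shrinking its parameters produces a basis of filtrating neighborhoods of $\cO(x)$ of arbitrarily small diameter, forcing $C(x) = \cO(x)$.
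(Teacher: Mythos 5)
Your approach is genuinely different from the paper's and, as you yourself flag, it leaves the hard part unproved. The paper does \emph{not} construct a local filtrating neighborhood of $\cO(x)$. Instead it works entirely with the pre-existing attracting region $A$ that comes with ${\bf R}=A\cap R$: choosing a fundamental domain $I=I_1\cup I_2$ of $W^s(x)\setminus\{x\}$ (a $1$-dimensional manifold, since $x$ has $s$-index $1$), it observes that backward iterates of each $y\in I$ eventually land in $D\setminus f^{\pi(D)}(D)$ and, by the $\Delta_D$-avoidance hypothesis, outside $f^{\pi(D)}({\bf R})$; combined with $W^s(x)\subset R$ (because $x\in\mathrm{int}(R)$ and $R$ is repelling), this forces some iterate of $I$ outside $A$. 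Then the standard pseudo-orbit argument closes the proof: any $\varepsilon$-pseudo-orbit from a point $z\in C(x)\setminus\cO(x)$ back to $x$ must pass near the fundamental domain $I$, hence near a point whose forward iterate leaves $A$; but an $\varepsilon$-pseudo-orbit starting at $x\in\bigcap_{n>0}f^n(A)\subset\mathrm{int}(A)$ is trapped in $A$ for $\varepsilon$ small, a contradiction.

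The gap in your proposal is precisely the step you call ``the principal obstacle'': you never verify that your set $U$ is a filtrating set, and this is not a minor technicality. Building an explicit compact attracting region by ``forward-saturating $U$ through the unstable-exit boundary'' and a compact repelling region by ``backward-saturating through the stable-exit boundary'' is delicate: forward and backward saturations are \emph{a priori} non-compact, the backward saturation must control a $2$-dimensional unstable manifold (one $E^u$ direction plus one expanding direction inside $E^{cs}$), and the ``no foreign returns'' assertion you invoke is, in effect, a stronger statement than the isolation you are trying to prove. Indeed, by Conley theory, the existence of arbitrarily small filtrating neighborhoods of $\cO(x)$ is \emph{equivalent} to $C(x)=\cO(x)$, so your plan aims to establish directly the full strength of the conclusion, whereas the paper's argument deduces it indirectly from the much easier fact that one fundamental domain of $W^s(x)$ escapes $A$. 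Your topological observation (the avoidance of $\Delta_D$ permits the stable separatrices to exit ${\bf R}$ cleanly) is exactly the right input, and it appears in the paper's proof of the key claim; but as written the proposal does not constitute a proof.
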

\begin{proof} 
We fix an attracting region $A$ and a repelling region $R$ 
such that ${\bf R}= A \cap R$.
Fix a disjoint union of
two compact intervals 
$I=I_1\cup I_2$ in
$W^s(x)\setminus \{x\}$ which contains a fundamental domain
for $f^{\pi(x)}$. It consists 
of two segments contained in different connected components of
$W^s(x)\setminus \{x\}$.

\begin{clai}~\label{c.funda}
There is an integer $i$ such that $f^i(I)\cap A=\emptyset$. 
\end{clai}
\begin{proof} Since $A$ is an attracting region,
for any $y\in M$, if  $f^i(y)\notin A$ then 
$f^j(x)\notin A$ for every integers $i$ and $j$ satisfying 
$j<i$.  By the compactness of $A$ and $I$, 
for proving the claim it is enough to prove that every
$y\in I$ has an itinerary out of $A$. 

For any point $y\in I$ 
we have an integer $j$ such that 
$f^j(y) \in D\setminus f^{\pi(D)}(D)$. 
By assumption, $f^j(y)$ does not belong to
$f^{\pi(D)}({\bf R})$. Meanwhile, since 
$x$ belongs to the interior of the repelling region 
$R$,  $W^s(x)$ is contained in $R$. 
Now one deduces that $f^j(y)\notin f^{\pi(D)}(A)$. 
Accordingly we have $f^i(y)\notin A$ for $i\leq j-\pi(D)$.
\end{proof}
By the Claim~\ref{c.funda} above, 
one can find a neighborhood $K$ of $I$ such that 
$f^i(K)$ is disjoint from $A$.
As $x$ is a periodic point 
contained in ${\bf R}$, 
we have $x \in f^n(A)\subset \mathrm{int}(A)$ for every $n >0$.

Suppose there exists a point $z \in \mathcal{R}(f) 
\setminus \mathcal{O}(x)$
(remember that $\mathcal{R}(f)$ denotes the chain recurrence set of $f$). 
By definition, for every $\varepsilon >0$ we can find an $\varepsilon$-pseudo
orbit $(y_k)_{k=0,\ldots, n}$ such that $y_0 = y_n = x$ and $y_{l} = z$ for some $l$.
Now, by using the fact that $x$ is a hyperbolic periodic point and $K$
contains a fundamental domain of $W^s(x)$ in its interior, 
one can deduce that if $\varepsilon$ is sufficiently small 
then the pseudo orbit $(y_k)$ must pass through $f^i(K)$.

However, notice that $K$ is disjoint from $A$.
This contradicts the assumption 
that $y_0 =x \in f^{\pi(D)}(A)\subset \mathrm{int}(A)$: 
for $\varepsilon$ small enough an $\varepsilon$-pseudo 
orbit starting at $x$ cannot go outside $A$. 
\end{proof}

\subsection{Mixing Markov partitions and uniqueness of quasi-attractor}

Given a partially hyperbolic filtrating Markov partition 
${\bf R} =\bigcup R_i$ of $f$ we associate an 
\emph{incidence matrix} which 
is a square matrix $A = (a_{ij})$ with integer entries, 
such that the term $a_{ij}$ is the number of connected components of $f(R_i)\cap R_j$ with non-empty interior. 

\begin{defi}
We say that a partially hyperbolic filtrating Markov partition 
${\bf R} = \bigcup R_i$ is \emph{transitive} 
if for every $(i, j)$ there exists an integer $n = n_{(i, j)}>0$ such that 
$(i, j)$-component of $A^n$ is non-zero.
Also, we say ${\bf R}$ is \emph{mixing} if there exists $n>0$
such that all the entries of $A^n$ are non-zero.
\end{defi}

We are interested in the transitivity or mixing property in regard to the 
uniqueness of the quasi-attractor in ${\bf R}$ for the attracting type. 
In this subsection, a \emph{segment} $\sigma \subset M$ means an image of an embedding of an interval in $M$.
A segment $\sigma \subset {\bf R}$ is \emph{$\mathcal{C}$-vertical} if for every point $s \in \sigma$
we have $T_{s}\sigma \subset \mathcal{C}_s$,
where $\mathcal{C}$ is a cone field on ${\bf R}$. 
A \emph{complete vertical segment} is a segment contained
in a rectangle $R_i$ such that the two end points connects the two different connected components 
of the lid boundary of $R_i$.

Let $\mathcal{C}$ be a strictly invariant unstable cone field over ${\bf R}$. 
If $\sigma$ is a $\mathcal{C}$-vertical segment in ${\bf R}$ satisfying $f(\sigma) \subset {\bf R}$,
then by using the uniform expansion property of $f$ on $\mathcal{C}$, 
we can see that the length of $f(\sigma)$
is greater than 
that of $\sigma$ multiplied by some constant strictly greater than $1$. 
Using this property, one can deduce the following: 
\begin{lemm}\label{l.segme}
Assume that ${\bf R} =\bigcup R_i $ is a 
mixing partially hyperbolic filtrating Markov partition of attracting type.
Let $\cC$ be a strictly invariant unstable cone field over it. 
Then given a segment $\sigma$
which is $\cC$-vertical, there is $n_0>0$ such that   
for any $n\geq n_0$, the set
$f^n(\sigma)\cap R_i$ contains 
a complete vertical  segment  for every $i$. 
\end{lemm}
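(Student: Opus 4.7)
The plan is to combine the uniform expansion on $\cC$ with the Markov combinatorics encoded in the incidence matrix $A$, via the following dichotomy for $\cC$-vertical segments inside ${\bf R}$: either the segment is short, or it contains a complete vertical segment in some rectangle.

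To establish the dichotomy, fix a $\cC$-vertical segment $\sigma'\subset {\bf R}$ and decompose it into the connected arcs $\sigma'\cap R_i$. Since $\partial_s R_i\subset \partial A$ while $\sigma'\subset {\bf R}\subset A$, the endpoints of each such arc lie either in $\partial_l R_i$ or are endpoints of $\sigma'$ itself. Transversality of $\cC$ to the horizontal plane field in $R_i$ forces the local vertical coordinate $z_i$ to be strictly monotone along the arc; in particular, an arc with both ends in $\partial_l R_i$ must connect the two lid components $\DD^2\times\{0\}$ and $\DD^2\times\{1\}$, and hence be a complete vertical segment. Consequently, the arcs that fail to be complete vertical segments must carry an endpoint of $\sigma'$, so there are at most two of them; each has length at most $h_i/\cos\theta_{\max}$ by the cone condition (where $h_i$ is the height of $R_i$ and $\theta_{\max}$ the aperture of $\cC$). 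Setting $L_0:=2\max_i h_i/\cos\theta_{\max}$ yields the dichotomy: any $\cC$-vertical segment of length exceeding $L_0$ contains a complete vertical segment in some rectangle.

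Now combine with expansion and mixing. Because ${\bf R}$ is attracting, $f^n(\sigma)\subset{\bf R}$ for every $n\geq 0$, and strict invariance of $\cC$ keeps it $\cC$-vertical. Its length grows at least like $\lambda^n$, so for $n_1$ large enough $\mathrm{length}(f^{n_1}(\sigma))>L_0$, and the dichotomy provides a complete vertical segment $\tau\subset f^{n_1}(\sigma)\cap R_{i_0}$ for some $i_0$. To propagate, observe that if $\tau$ is a complete vertical segment in $R_j$ and $R'$ is a vertical sub-rectangle component of $f(R_j)\cap R_i$, then by Proposition~\ref{p.image}, $R'=f(\tilde R)$ for a horizontal sub-rectangle $\tilde R$ of $R_j$, which $\tau$ crosses from one lid component of $R_j$ to the other by completeness; its image is a complete vertical segment of $R_i$ inside $R'$. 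Iterating, $f^n(\tau)$ contains at least $(A^n)_{i_0,i}$ complete vertical segments of $R_i$. The mixing assumption yields $N$ with $(A^N)_{i_0,i}>0$ for every $i$, which realises the conclusion at $n_0:=n_1+N$. Finally, mixing also forces every column of $A$ to have a positive entry, so once $f^n(\sigma)$ contains a complete vertical segment in every rectangle, so does $f^{n+1}(\sigma)$; induction concludes.

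The main technical obstacle is the dichotomy in the first step: in the attracting setting rectangles may be adjacent along lid boundaries, so arcs can pass freely between rectangles, and one must exploit the transversality of $\cC$ and the fact that $\partial_s R_i\subset\partial A$ cannot be crossed in order to conclude that every ``non-defective'' arc has both ends on different lid components. Once this topological fact is in place, exponential growth and positivity of powers of $A$ finish the argument.
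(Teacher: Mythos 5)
Your proof is correct and fills in precisely the argument the paper only sketches: the paper's sole hint is that uniform expansion on $\cC$ makes the length of a $\cC$-vertical segment grow geometrically, from which ``one can deduce'' the lemma, and your dichotomy (a segment exceeding a fixed length $L_0$ must contain a complete vertical segment, since the $z$-monotonicity forced by the cone condition shows every arc of $\sigma'\cap R_i$ not carrying an endpoint of $\sigma'$ joins the two lid components) plus propagation via the Markov combinatorics of $A^n$ is exactly that deduction. Two small remarks for the write-up: (i) you should invoke $\sigma'\subset f(A)\subset\mathrm{int}(A)$ (rather than merely $\sigma'\subset A$) to rule out endpoints on $\partial_s R_i\subset\partial A$, and the bound ``at most two defective arcs'' should be ``boundedly many,'' since an endpoint of $\sigma'$ lying on a shared lid may belong to more than one rectangle; (ii) the assertion that a complete vertical segment $\tau$ of $R_j$ must cross the $\cC$-horizontal sub-rectangle $\tilde R=f^{-1}(R')$ from one of \emph{its} lids to the other deserves a word — it follows because each lid of $\tilde R$ is a properly embedded disc in the ball $R_j$ transverse to $\cC$, hence separates $R_j$ with the two lids of $R_j$ on opposite sides, and $\tau$ crosses it transversally, with sign determined by the cone, so exactly once.
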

Remember that a \emph{homoclinic class} of a hyperbolic periodic point is the closure 
of the set of points of transversal intersection between the stable and the unstable 
manifold of it.
As a direct corollary of Lemma~\ref{l.segme}, we have the following.
\begin{coro}\label{c.cut}
Suppose ${\bf R}$ is of attracting type and mixing, and $x \in {\bf R}$ is a hyperbolic periodic 
point of $s$-index $2$ with large stable manifold. Then,
\begin{itemize}
 \item The homoclinic class of $x$ is non-trivial.
 \item Every leaf of the unstable lamination of ${\bf R}$ cuts $W^s(x)$.
\end{itemize}
\end{coro}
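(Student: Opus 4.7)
The plan is to combine Lemma~\ref{l.segme} with the following basic topological fact, which I would verify first: any complete vertical segment in a rectangle $R_i$ of the partition transversely cuts every center-stable disc contained in $R_i$. Indeed, in the cylindrical model $\DD^2\times[0,1]$, a center-stable disc is transverse to the vertical cone field $\cC$ with boundary on the side boundary $\partial_s R_i$, so it separates $R_i$ into two connected components, one meeting each lid component; a complete vertical segment has one endpoint on each lid and its tangent vectors lie in $\cC$, so it must cross the disc, transversely because of the cone splitting. Note that under the large stable manifold hypothesis, the center-stable disc $D_x$ through $x$ (contained in the rectangle $R_{i_0}$ that contains $x$) is entirely contained in $W^s(x)$.

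For the first bullet, I would pick a short $\cC$-vertical sub-segment $\sigma\subset W^u(x)$ on one side of $x$, so that $\sigma\subset R_{i_0}\setminus\{x\}$. Since $W^u(x)$ is $f^{\pi(x)}$-invariant and $f^{\pi(x)}$ fixes $x$, one has $f^{n\pi(x)}(\sigma)\subset W^u(x)\setminus\{x\}$ for every $n\geq 0$. By Lemma~\ref{l.segme}, some iterate $f^{n\pi(x)}(\sigma)$ contains a complete vertical segment in $R_{i_0}$, and by the observation above this segment transversely cuts $D_x\subset W^s(x)$ at points of $W^s(x)\setminus\{x\}$. This produces a transverse homoclinic intersection distinct from $x$, so the homoclinic class of $x$ is non-trivial.

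For the second bullet, let $\ell$ be an arbitrary leaf of the unstable lamination $\La_+=\bigcap_{n\geq 0}f^n({\bf R})$ and let $\sigma\subset\ell$ be any sub-segment. Since $\cC^u$ is strictly $Df$-invariant, $Df^n(\cC^u)\subset\cC^u$ for all $n\geq 0$, so the tangent lines of leaves of $\La_+$ lie in $\cC^u$ and $\sigma$ is $\cC$-vertical. Applying Lemma~\ref{l.segme}, some $f^n(\sigma)$ contains a complete vertical segment in $R_{i_0}$, which meets $D_x\subset W^s(x)$; pulling this intersection point back by $f^n$ and using the $f$-invariance of the stable set of the orbit of $x$, the leaf $\ell$ itself meets $W^s$ of the orbit of $x$. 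I do not foresee a serious obstacle: the argument is essentially the assembly of Lemma~\ref{l.segme} with the separating property of center-stable discs, and the only mild point to watch is the choice of $\sigma$ in the first item to ensure that the homoclinic intersection produced is different from $x$, which is immediate from the dynamics of $f^{\pi(x)}$ on $W^u(x)$.
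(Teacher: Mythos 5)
Your argument is correct and supplies exactly the elementary reasoning the paper suppresses when it calls this a ``direct corollary'' of Lemma~\ref{l.segme}: the only missing ingredient is the topological observation you isolate, namely that a $\cC$-vertical complete vertical segment of $R_{i_0}$ must cross the center-stable disc $D_x\subset W^s(x)$ transversally. One small point to tidy up in your second bullet: pulling back by $f^{-n}$ a priori lands in $W^s(f^{-n}(x))$, i.e.\ in $W^s$ of the orbit of $x$ rather than in $W^s(x)$ as the statement requires; since Lemma~\ref{l.segme} is valid for every $n\geq n_0$, just take $n$ to be a multiple of $\pi(x)$ (exactly as you did in the first bullet), so that $f^{-n}(W^s(x))=W^s(x)$ and the leaf $\ell$ meets $W^s(x)$ itself.
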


Finally, we obtain the following:
\begin{coro}\label{c.quasi} Suppose that ${\bf R} = \bigcup R_i$ is a  mixing partially
hyperbolic filtrating Markov partition of attracting type and
$x\in {\bf R}$ is a periodic point with large stable manifold. 
Then for every $g$ which is sufficiently $C^1$-close to $f$, 
$g$ admits a unique quasi-attractor in ${\bf R}$ and it is equal to
the chain recurrence class of the continuation $x_g$ of $x$. 
Furthermore, the basin of this quasi-attractor is residual in ${\bf R}$. 
\end{coro}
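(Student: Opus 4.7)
By Lemma~\ref{l.continuation}, for $g$ sufficiently $C^1$-close to $f$ there is a diffeomorphism $\psi$ near the identity such that ${\bf R}_g := \psi({\bf R})$ is a partially hyperbolic filtrating Markov partition of attracting type for $g$; its incidence matrix equals that of $\bf R$, so ${\bf R}_g$ is again mixing. The continuation $x_g$ lies in ${\bf R}_g$ and retains its large stable manifold (a $C^1$-robust condition). Hence Corollary~\ref{c.cut} and Lemma~\ref{l.segme} apply to $g$ and ${\bf R}_g$, and the plan is to identify $C(x_g,g)$ as the unique quasi-attractor in ${\bf R}_g$, then show its basin is residual.

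\emph{Uniqueness of the quasi-attractor.} Let $Q \subset {\bf R}_g$ be any quasi-attractor of $g$, and choose an attracting neighborhood $U \subset {\bf R}_g$ of $Q$ with $Q = \bigcap_{n \geq 0} g^n(U)$. I first show $W^u(y) \subset Q$ for every $y \in Q$: for $z \in W^u(y)$ the backward iterates $g^{-n}(z)$ converge to $g^{-n}(y) \in Q \subset U$, so $g^{-n}(z) \in U$ for large $n$, whence $z \in g^n(U)$, and therefore $z \in Q$. By Corollary~\ref{c.cut}, $W^u(y)$ meets $W^s(x_g,g)$ at some point $z$, which then lies in $Q$ and satisfies $\mathcal{O}(x_g) \subset \omega(z) \subset Q$; hence $Q = C(x_g,g)$. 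Since a quasi-attractor always exists in the attracting region ${\bf R}_g$ by Conley theory, $C(x_g,g)$ is the unique one.

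\emph{Residuality of the basin.} Set $Q = C(x_g,g)$ and fix a decreasing basis $\{U_n\}$ of attracting neighborhoods of $Q$. Each $B_n := \bigcup_{k \geq 0} g^{-k}(U_n) \cap {\bf R}_g$ is open. For density, let $V \subset {\bf R}_g$ be any non-empty open set and pick a $\cC^u$-vertical segment $\sigma \subset V$. By Lemma~\ref{l.segme}, for $k$ sufficiently large, $g^k(\sigma) \cap R_{i_0}$ contains a complete vertical segment, where $R_{i_0}$ is the rectangle containing $x_g$. Because $x_g$ has large stable manifold, $W^s(x_g,g)$ contains the center-stable disc $D$ through $x_g$; since $D$ is a graph over the horizontal directions in $R_{i_0}$, it separates $R_{i_0}$ into two components, so any complete vertical segment must cross $D$. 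Pulling this intersection back along $g^k$ produces $z \in \sigma \subset V$ whose forward orbit converges to $\mathcal{O}(x_g) \subset Q$, so $z \in B_n$. Hence each $B_n$ is open and dense, and $B := \bigcap_n B_n$ is residual; every $z \in B$ has $\omega(z) \subset Q$, proving that the basin is residual.

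The main technical point is linking an abstract quasi-attractor to the unstable lamination: proving $W^u(y) \subset Q$ for $y \in Q$, and then invoking Corollary~\ref{c.cut} to force a heteroclinic intersection between $W^u(y)$ and $W^s(x_g,g)$. Once this is secured, the basin's residuality reduces to a Baire-category argument powered by the expansion of $\cC^u$-vertical segments in Lemma~\ref{l.segme} together with the largeness of $W^s(x_g,g)$.
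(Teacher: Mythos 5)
Your proof is correct and follows essentially the same route as the paper: show any quasi-attractor in $\bf R$ is $u$-saturated, then invoke Corollary~\ref{c.cut} to force $x_g$ into it, then use Lemma~\ref{l.segme} and the largeness of $W^s(x_g)$ to get the residual basin via a Baire-category argument. The only difference is cosmetic: you prove the saturation $W^u(y)\subset Q$ directly from the definition of an attracting neighborhood, whereas the paper simply cites the analogous step from \cite{BLY}, and you spell out the density of each $B_n$ via Lemma~\ref{l.segme} where the paper states it more tersely.
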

\begin{proof} For the first claim, we just 
need to repeat the  argument in \cite{BLY}: 
any quasi-attractor contained in ${\bf R}$ is saturated by the strong unstable lamination.  Since each leaf of
the unstable lamination cuts the large stable manifold of $x$
(see Corollary~\ref{c.cut}), 
we deduce that $x$ belongs to the 
closure of the quasi-attractor, 
so it belongs to the quasi-attractor.  This shows that 
every quasi-attractor in ${\bf R}$ 
is the chain recurrence class of $x$,
which implies its uniqueness. Since the largeness of the stable manifold
for a periodic point and the existence of partially hyperbolic 
filtrating Markov partitions are both $C^1$-robust (see Lemma~\ref{l.continuation} for the second item),
we have the uniqueness for $g$ sufficiently $C^1$-close to $f$. 

Now let $U_n$ be a basis of neighborhood of the class of $x$ consisting of attracting regions. Let 
$O_n\subset {\bf R}$ be the basin of the attracting region $U_n$. 
As  the (entire) stable manifold of $x$ cuts every 
vertical segment, one deduces 
that $O_n$ is a open dense subset of ${\bf R}$ for every $n$.  
Then $\bigcap_{n\geq 0} O_n$ is the basin of the quasi-attractor 
which is a residual subset of ${\bf R}$. 

\end{proof}


\section{Ejection of flexible periodic points}

In this section, we review some of the results of \cite{BS}. 
Then, by using these results, we finish the proof of 
Theorem~\ref{t.wild}.

\subsection{Flexible points: definition}
In \cite{BS} the authors defined the notion of 
\emph{flexible periodic points} and 
proved their generic existence under certain partially hyperbolic 
setting which includes partially hyperbolic filtrating Markov
partitions. Let us recall it.

A \emph{($\,2$-dimensional) periodic linear cocycle of period $n$} is a 
map
$\cA\colon (\ZZ/n\ZZ) \times \RR^2 \to (\ZZ/n\ZZ) \times \RR^2 $ 
of the form $\cA(i,p)=(i+1,A_i(p))$, where 
$A_i\in GL(2,\RR)$.
For linear cocycles $\mathcal{A}$, $\mathcal{B}$  of period $n$,  we define the \emph{distance} between 
$\mathcal{A}$ and $\mathcal{B}$, denoted by $\mathrm{dist}(\mathcal{A}, \mathcal{B})$, as follows:
\[
\mathrm{dist}(\mathcal{A}, \mathcal{B}) := 
\sup \| \mathcal{A}(x) - \mathcal{B}(x) \|,
\]
where $x$ ranges over all unit vectors in all fibers, that is, all unit 
vectors in $\coprod_{i \in\ZZ/n\ZZ}(\{ i\} \times \mathbb{R}^2)$.
This defines a distance on the space of linear cocycles of period $n$.

Let $\mathcal{A}_t$ denote a continuous one-parameter family of cocycles, 
that is, a continuous map from an interval to the space of cocycles. 
We define  the \emph{diameter of  the family $\mathcal{A}_t$} as
\[
\mathrm{diam}(\mathcal{A}_t) := 
\sup_{s< t}  \mathrm{dist} (\mathcal{A}_s, \mathcal{A}_t).
\]
To define the notion of flexible points, we first
give the definition of \emph{flexible cocycles}.
\begin{defi}\label{d.flexible}
Let $\cA=\{A_i\}_{i\in\ZZ /n\ZZ }$, $A_i\in \mathrm{GL}(2,\RR)$ 
be a linear cocycle of period $n > 0$ and 
let $\varepsilon$ be a positive real number.
We say that $\cA$ is \emph{$\varepsilon$-flexible} if there is a continuous 
one-parameter family of linear cocycles $\cA_t=\{A_{i,t}\}_{i\in\ZZ /n\ZZ}$ 
defined for $t \in [-1, 1]$ such that the following holds:
\begin{itemize}
\item $\mathrm{diam}(\mathcal{A}_t)  < \varepsilon$;
\item $A_{i,0}=A_i$, for every $i\in\ZZ /n\ZZ$;
\item the product matrix 
$A_{-1} := (A_{n-1,-1})\cdots (A_{0, -1})$ is a homothety
(remember that a square matrix is called \emph{homothety} if it has 
the form $\lambda\, \mathrm{Id}$, where $\lambda$ is a positive constant);
\item for every $t\in (-1,1)$, the product matrix 
$A_t = (A_{n-1,t})\cdots (A_{0,t})$ has 
two distinct positive contracting eigenvalues;
\item let $\lambda_t$ denote the smallest eigenvalue of 
 the product matrix $A_{t}$. Then 
 $\max_{-1 \leq t \leq 1} \lambda_t <1$;
\item $A_1$ has a real positive eigenvalue equal to $1$. 
\end{itemize}
\end{defi}

Now let us see the definition of flexible periodic points.
\begin{defi}
Let $M$ be a closed $3$-manifold,  $f \in \mathrm{Diff}^1(M)$ 
and ${\bf R} =\bigcup R_i$ be a partially 
hyperbolic filtrating Markov partition of $f$ (of saddle or attracting type).
A periodic orbit $\cO(x) \subset {\bf R}$ 
 is called \emph{$\varepsilon$-flexible} 
if the two dimensional linear cocycle of the restriction of the derivatives  
to the center-stable bundle along its orbit is  $\varepsilon$-flexible.
A periodic point is called $\varepsilon$-flexible if its orbit is so.
\end{defi}

\begin{rema}
To define the notion of flexibility of periodic points, we do not need the 
notion of partially hyperbolic filtrating Markov partitions
(partially hyperbolic structure is enough). 
Since in this paper we only treat flexible points 
appearing in partially hyperbolic filtrating Markov partitions, 
we adopted the definition above.
\end{rema}

\subsection{Isolation of flexible points} \label{s.perturbations}

In this subsection, we state a perturbation result on the isolation of 
flexible periodic points (see Proposition~\ref{p.kicking}). 

In \cite{BS}, the authors
showed that an $\varepsilon$-flexible
periodic point has great freedom of changing the position of 
their (strong) stable manifold by an $\varepsilon$-perturbation 
which is supported in an arbitrarily small neighborhood of the orbit.
By using such flexibility of flexible points (whose precise meaning 
will be elucidated in the next subsection), we can prove the following:

 \begin{prop}\label{p.kicking} Let ${\bf R} = \bigcup R_i$ be 
 a partially hyperbolic filtrating Markov partition 
 (of saddle or attracting type) of a diffeomorphism 
 $f$. Let $\varepsilon_0$ be a positive real number such that ${\bf R} = \bigcup R_i$ is still 
 a partially hyperbolic filtrating Markov partition for every
 $g$ which is $\varepsilon_0$-$C^1$-close
  to $f$ (see Lemma~\ref{l.continuation}).  
 
 Fix a positive real number $\varepsilon$ satisfying $0<\varepsilon<\varepsilon_0$.
 Suppose that  
 $x\in {\bf R}$ is a hyperbolic periodic point of $s$-index $2$ 
 with large stable manifold and $\varepsilon$-flexible. 
  Then, there is an $\varepsilon$-$C^1$-perturbation 
 $g$ of $f$, supported in an arbitrarily small 
 neighborhood of the orbit of $x$ such that the following holds:
 \begin{itemize}
  \item $f$ and $g$ coincide on the orbit of $x$. 
  In particular, $x$ is a periodic point for $g$ with the same orbit.
  \item $x$ is an $s$-index $1$ hyperbolic periodic point of $g$, 
  having an unstable eigenvalue arbitrarily close to $1$.
  \item the chain recurrence class of the orbit of $x$ is trivial for $g$. 
 \end{itemize}
 \end{prop}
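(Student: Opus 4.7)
The plan is to combine the topological obstruction in Proposition~\ref{p.obstacle} with the freedom granted by $\varepsilon$-flexibility. Since $x$ has large stable manifold, the periodic center-stable disc $D$ through $x$ is contained in $W^s(x,f)$; after any $C^1$-small perturbation $g$ that converts $x$ into an $s$-index $1$ saddle, the continuation of this disc is still a periodic center-stable disc $D_g$ containing $x$, and the stable manifold of $x$ for $g$ becomes a one-dimensional curve inside $D_g$. The target is to arrange this curve so that its projection to the torus $\TT_{D_g}$ associated with $D_g$ (see Lemma~\ref{l.TD}) is a disjoint union of two simple closed curves, each having intersection number $1$ with the parallel (see Definition~\ref{d.para}) and each avoiding the finite collection of discs $\De_{D_g}$ described in Lemma~\ref{l.obstacle}. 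Once this is achieved, Proposition~\ref{p.obstacle} immediately yields the triviality of the chain recurrence class of $x$ for $g$.

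First, I would choose the target curves. The set $\De_D \subset \TT_D$ consists of finitely many pairwise disjoint topological discs, and since a torus minus finitely many discs is still richly connected, I can select two disjoint simple closed curves $\gamma_1, \gamma_2 \subset \TT_D \setminus \De_D$, each transversely intersecting the parallel exactly once. Lifting them to $D \setminus \lambda_D$ produces two arcs crossing a fundamental domain of the first return map $f^{\pi(D)}|_D$, and these arcs are the desired target for the new stable manifold of $x$.

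Second, I would apply the $\varepsilon$-flexibility of $x$ as developed in \cite{BS}. The $\varepsilon$-flexibility of the cocycle along $\cO(x)$ provides an $\varepsilon$-$C^1$-perturbation $g$ of $f$, supported in an arbitrarily small neighborhood of $\cO(x)$ and coinciding with $f$ on $\cO(x)$, such that $x$ becomes an $s$-index $1$ hyperbolic periodic point of $g$ whose unstable eigenvalue is arbitrarily close to $1$ (one uses a parameter $t$ close to $1$ in the family of Definition~\ref{d.flexible}); moreover, the freedom granted by flexibility allows one to prescribe the direction of the (weak) stable eigenspace, and hence the direction of $W^s(x,g)$ inside the center-stable plane at each point of the orbit. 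Combining this local prescription with a standard $\lambda$-lemma type argument using the forward invariance of $D_g$ under $g^{\pi(D_g)}$, this local control can be promoted to a global prescription of a fundamental domain of $W^s(x,g)$ inside $D_g$, so that the projection of $W^s(x,g)\setminus\{x\}$ to $\TT_{D_g}$ is isotopic, through curves disjoint from $\De_{D_g}$, to $\gamma_1 \cup \gamma_2$.

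Finally, applying Proposition~\ref{p.obstacle} to the perturbed map $g$ yields that the chain recurrence class of $x$ for $g$ is trivial, completing the proof. The main obstacle in the argument is the middle step: translating the purely cocycle-theoretic $\varepsilon$-flexibility property into the geometric statement that the branches of a fundamental domain of $W^s(x,g)$ can be prescribed, as curves in $D_g$, to lie in any desired isotopy class relative to $\De_{D_g}$. Everything else is either purely topological (the choice of the curves $\gamma_i$ on the punctured torus) or a direct invocation of the already-established tools from the preceding section and from \cite{BS}.
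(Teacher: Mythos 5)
Your high-level plan matches the paper's: pick target curves in $\TT_D$ avoiding the obstacle set $\De_D$, realize them as the projection of the stable separatrices of $x$ after a perturbation that flips the index, and conclude with Proposition~\ref{p.obstacle}. The opening and closing steps are fine. But the middle step is where the real content of the proposition lies, and the argument you offer for it is incorrect.

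You claim that flexibility lets you ``prescribe the direction of the (weak) stable eigenspace \ldots at each point of the orbit,'' and then that ``a standard $\lambda$-lemma type argument'' upgrades this local control to control of the isotopy class of a fundamental domain of $W^s(x,g)$ in the orbit-space torus. This does not work. The isotopy class in $\TT_D\setminus\De_D$ of a projected stable separatrix encodes a global winding of the separatrix inside the punctured disc $D\setminus\lambda_D$; this winding is in no way determined by the eigendirection at $x$, and the $\lambda$-lemma (which concerns accumulation of iterated transversals on an invariant manifold) gives no leverage over it. The correct tool is exactly Corollary~\ref{c.flexible} (equivalently \cite[Theorem~1.1, Corollary~1.1]{BS}): it is a genuinely nontrivial consequence of the full one-parameter family in Definition~\ref{d.flexible}---in particular of the passage through a homothety at $t=-1$, where the absence of a distinguished eigendirection permits arbitrary rotation of the separatrices---and it directly asserts that the projected stable manifold can be placed in \emph{any} pair of curves isotopic to the meridian. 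You identify this step as ``the main obstacle'' but do not supply the tool that resolves it; that is a genuine gap.

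Two smaller issues. First, you only require your target curves to have intersection number $1$ with the parallel, but Theorem~\ref{t.flexible} requires them to be isotopic to the meridian (the projection of $W^{ss}(x)$), which is a strictly stronger condition on a torus; such curves disjoint from $\De_D$ do exist, so the fix is just to ask for them explicitly. Second, you do not address why $\De_D$ is the same before and after the perturbation: the paper takes the support small enough that $h^{\pi(x)}({\bf R})=f^{\pi(x)}({\bf R})$ and arranges the perturbation to preserve the union $S$ of center-stable discs through the orbit, so that the disc $D$, the orbit-space torus, and the obstacle set $\De_D$ are literally unchanged rather than ``continued.'' Your formulation via a continuation $D_g$ is vaguer and would need an extra argument to ensure that $\gamma_1\cup\gamma_2$ is still disjoint from $\De_{D_g}$.
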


We will see how we deduce Proposition~\ref{p.kicking}
from the results of \cite{BS} in the rest of this section.
As an immediate corollary of this proposition, we have the following:   
 \begin{coro}\label{c.kicking} Under the  hypotheses of Proposition~\ref{p.kicking}, 
 an  $\varepsilon$-perturbation $h$ of $g$ 
 turns
 $x$ into an $s$-index $2$ hyperbolic periodic point with trivial chain recurrence class. 
 \end{coro}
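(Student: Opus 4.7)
The plan is to apply Franks' lemma to $g$ in order to flip the central eigenvalue of $x$ through $1$, and then to argue that the triviality of the chain recurrence class is preserved under this perturbation.

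For $g$ given by Proposition~\ref{p.kicking}, the orbit $\cO(x)$ is hyperbolic of $s$-index $1$: one unstable eigenvalue lies along the robustly expanding direction $E^u$, while the remaining two eigenvalues live in the center-stable bundle $E^{cs}$, one contracting and one expanding. By the conclusion of Proposition~\ref{p.kicking}, this $E^{cs}$-unstable eigenvalue is arbitrarily close to $1$. First I would apply Franks' lemma along $\cO(x)$ to replace it by a contracting eigenvalue slightly less than $1$, leaving the remaining two eigenvalues essentially untouched. The resulting diffeomorphism $h$ coincides with $g$ on $\cO(x)$, can be made $\varepsilon$-$C^1$-close to $g$ with support in an arbitrarily small neighborhood of the orbit, and makes $x$ into a hyperbolic periodic point of $s$-index $2$.

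Next I would verify that $C(x,h) = \cO(x)$. Since $C(x,g) = \cO(x)$, Conley theory yields an arbitrarily small filtrating neighborhood $U$ of $\cO(x)$ with $\cR(g)\cap U = \cO(x)$. I would arrange the support of the Franks perturbation to lie inside $U$; then $U$ remains filtrating for $h$ by $C^0$-robustness, and $h=g$ outside $U$. Because $x$ is hyperbolic for $h$ with its new index, $\cO(x)$ also admits a basis of filtrating neighborhoods $V_n$ shrinking to $\cO(x)$ with $\cR(h)\cap V_n = \cO(x)$. By the upper semicontinuity of the chain-recurrent set under $C^0$-perturbations, for $h$ close enough to $g$ one has $\cR(h)\cap U \subset V_n$ for any prescribed small $V_n$, and hence $\cR(h)\cap U = \cO(x)$. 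Thus $\cO(x)$ is isolated in $\cR(h)$ and $C(x,h) = \cO(x)$.

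The delicate point is this last step: flipping the index changes the hyperbolic splitting of $\cO(x)$, and one must exclude the appearance of new chain-recurrent points inside the fixed filtrating neighborhood $U$. The obstacle dissolves if the sizes are chosen in the correct order --- first $U$ and the basis $V_n$ for the hyperbolic orbit under the perturbed map, only afterwards the magnitude of the Franks perturbation --- so that the classical $C^0$-upper semicontinuity of $\cR$, combined with the local hyperbolicity of $\cO(x_h)$, does the job.
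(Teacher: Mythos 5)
Your plan --- flip the weak eigenvalue of $\cO(x)$ through $1$ and then control the chain recurrence class via filtrating neighborhoods and $C^0$-upper semicontinuity of $\cR$ --- is a reasonable first idea, but the key intermediate claim breaks down: it is \emph{not} true that $\cR(h)\cap U=\cO(x)$ after such a perturbation, and the order-of-quantifiers fix you propose is circular.

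Here is the concrete obstruction. In the center direction, $g$ has eigenvalue $\mu=1+\epsilon>1$ along $\cO(x)$, and a Franks perturbation is asked to make it $\mu'<1$ while coinciding with $g$ outside an arbitrarily small neighborhood $N$ of $\cO(x)$. Restricting to a one-dimensional center model, $h^{\pi(x)}(z)-z$ is negative near $z=0$ (since $\mu'<1$) but equal to $g^{\pi(x)}(z)-z>0$ outside $N$; by the intermediate value theorem, $h^{\pi(x)}$ acquires new fixed points inside $N$. These are new periodic orbits of $h$, lying inside $U$ and inside every neighborhood containing the support of the perturbation. Hence $\cR(h)\cap U\supsetneq\cO(x)$ no matter how the Franks perturbation is realized. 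Your sentence ``$\cO(x)$ also admits a basis of filtrating neighborhoods $V_n$ shrinking to $\cO(x)$ with $\cR(h)\cap V_n=\cO(x)$'' is precisely the statement $C(x,h)=\cO(x)$ that you are trying to prove --- hyperbolicity of $\cO(x)$ alone does not give it (a saddle inside a horseshoe is the standard counterexample). And the quantifier reordering cannot save this: the $V_n$ you need must avoid the new orbits, so they depend on $h$, while $h$ (and its support) must be chosen after $U$ has been fixed. Whichever way you nest the choices, the new periodic orbits land in $\cR(h)\cap U\setminus V_n$.

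The paper sidesteps this entirely by \emph{not} invoking an abstract Franks-type result. Instead it constructs the perturbation explicitly so that the maximal invariant set of $h$ inside $U$ is the orbit of a small periodic segment in the center direction, on which the dynamics is gradient-like and the chain recurrent set consists of exactly three periodic orbits: $\cO(x)$, now of $s$-index $2$, and two new orbits of $s$-index $1$ at the endpoints. This explicit local picture is what makes each of the three chain recurrence classes manifestly trivial, including that of $x$. So the new periodic orbits you cannot rule out with upper semicontinuity are in fact unavoidable; the point is to \emph{control} them by building the perturbation by hand, rather than to exclude them abstractly.
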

 \begin{proof} Proposition~\ref{p.kicking} produces a diffeomorphism $g$ for which 
 $x$ is an $s$-index $1$ hyperbolic periodic point of $g$, 
  having an unstable eigenvalue arbitrarily close to $1$ and a trivial chain recurrence class. This implies
  that  the orbit of
  $x$ admits a filtrating neighborhood $U$ in which the orbit of $x$  is the maximal invariant set. 
  Since $x$ has an unstable 
  eigenvalue very close to $1$, there is a $C^1$-small perturbation, supported in 
  an arbitrarily small neighborhood of $x$ (hence in $U$), such that the maximal invariant set in $U$ 
  is  the orbit of a periodic segment on which the 
  chain recurrent set consists of $3$ periodic orbits, $2$ of them 
  being $s$-index $1$ and the rest is the orbit of  $x$ (which has $s$-index $2$).
 \end{proof}

The proof of Proposition~\ref{p.kicking} will be done by
realizing a perturbation given by \cite[Theorem~1.1]{BS}  
and \cite[Corollary~1.1]{BS}
in a $2$-dimensional setting,
which is an extraction of the dynamics in the center-stable disc 
passing through $x$. 
Indeed, the proof of Proposition~\ref{p.kicking} is an
almost immediate corollary of Corollary~1.2 of \cite{BS}.
Because of the importance of this 
step and for the convenience of the reader, 
we will present the argument here again.

\subsection{Background from \cite{BS}: the two dimensional setting}
\label{ss.twodim}

Now we cite Theorem~1.1
and Corollary~1.1 of \cite{BS},
which is used in the proof of Proposition~\ref{p.kicking}.

To state them, let us consider the following general setting.
Let  $S$ be a surface (a smooth two dimensional manifold, 
which may have non-empty boundary and may be non-connected)
and $F\colon S\to S$ be a $C^1$-diffeomorphisms on its image.  
Let $\varepsilon>0$ and 
 $x\in S$ be a periodic point of $F$ with period $\pi(x)$.
Suppose that $x$ is an $\varepsilon$-flexible 
(attracting) periodic point, that is, the differential $DF$
along the orbit of $x$ is an $\varepsilon$-flexible cocycle. 

Let $D$ be an attracting 
periodic disc of period ${\pi(x)}$, that is, 
$D$, $F(D),\dots, F^{{\pi(x)}-1}(D)$ are pairwise disjoint 
and $F^{\pi(x)}(D)$ is contained in the interior of $D$. 
We assume that $D$ is contained in the local stable manifold of $x$.
As in Lemma~\ref{l.TD}, 
the orbit space of $F^{\pi(x)}$ in $D\setminus \{ x\}$ is diffeomorphic to
the torus $\TT_D$. In the following, we denote this orbit 
space by $\TT_{F}$.
Then, $\partial D$ induces a homotopy 
class called parallel and the strong stable 
separatrices of $x$ induce two homotopy classes of curves
on the orbit space $\TT_{F}$
(recall that the flexibility assumption implies that 
$x$ has two distinct positive contracting eigenvalues).
We call this homotopy class {\it meridian}. 

We consider perturbations $G$ of $F$ preserving the 
 orbit of $x$, 
 supported in a small neighborhoods of $\mathcal{O}(x)$ 
such that $G^{\pi(x)}$ coincides with $F^{\pi(x)}$
 on $D\setminus F^{\pi(x)}(D)$. 
 By $\La_G$ 
 we denote the maximal invariant set of $G^{\pi(x)}$ in $D$
(i.e., $\La_G :=\bigcap_{n\geq 0} G^{n\pi(x)}(D)$) and
by $\TT_{G}$ we denote
the orbit space of $G^{\pi(x)}$ on $D \setminus \La_G$.
One can see that $\TT_{G}$ is diffeomorphic to the torus as well.
Then, for $G$ satisfying these conditions,
we identify $\TT_{F}$ and $\TT_{G}$
through the (unique) conjugacy 
between $F^{\pi(x)}|_{D\setminus \{x\}}$ 
and $G^{\pi(x)}|_{D\setminus \La_G}$
which is the identity map on 
$D\setminus F^{\pi(x)}(D)=D\setminus G^{\pi(x)}(D)$.

Under this identification, the freedom of flexible points
can be formulated by
the following Theorem~\ref{t.flexible},
which is proved in \cite[Theorem~1.1]{BS}:

\begin{theo}\label{t.flexible}
Let $F$ be a $C^1$-diffeomorphism of a surface $S$,
$x$ an $\varepsilon$-flexible periodic point
and $D$ an attracting periodic disc of period ${\pi(x)}$. 
Assume that $D$ contains $x$ and 
is contained in the stable manifold of $x$. 
Let $\gamma=\gamma_1 \cup \gamma_2 \subset \TT_{F}$ 
be the two simple closed curves which $W^{ss}(x)$ projects to.

Then, for any pair of $C^1$-curves $\sigma=\sigma_1\cup \sigma_2$ 
embedded in $\TT_{F}$ which 
is isotopic to $\gamma_1 \cup \gamma_2$,
there is an $\varepsilon$-perturbation $G$ of $F$, 
supported in an arbitrarily small neighborhood of the orbit of $p$
such that $G$ satisfies the following:
\begin{itemize}
\item $G$ coincides with $F$ along the orbit of $x$
(in particular $x$ is a periodic point of $G$ with the same period $\pi(x)$);
 \item $x$ is a (non-hyperbolic) periodic attracting point having an eigenvalue 
$\lambda_1\in]0,1[$ and an eigenvalue $\lambda_2=1$;
\item $D$ is contained in the basin of $x$;
\item the strong stable manifold $W^{ss}(x, G)$ projects to 
$(\sigma_1 \cup \sigma_2) \subset \TT_{G} \simeq \TT_{F}$.
\end{itemize}
\end{theo}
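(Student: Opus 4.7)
The plan is to exploit the flexibility path $\mathcal{A}_t$ to simultaneously drive the cocycle along $\mathcal{O}(x)$ to a configuration with eigenvalues $(\lambda_1,1)$ and to freely position the strong stable direction at $x$. Passing to the first return map $F^{\pi(x)}$ on $D$, the problem reduces to constructing a $C^1$-perturbation of the $2$-dimensional map supported in an arbitrarily small neighborhood $U$ of $x$ (the perturbation of $F$ itself being obtained by spreading the construction over the orbit). The projection of $W^{ss}(x,G)$ to $\TT_G$ will then be controlled by reading off what happens on a fundamental annulus of $G^{\pi(x)}$ in $D$.

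I would carry this out in two stages. \emph{First}, by a Franks-type construction, realize the restricted path $(\mathcal{A}_t)_{t\in [0,-1]}$ as a $C^1$-perturbation of $F$: since $\mathrm{diam}(\mathcal{A}_t)<\varepsilon$, one gets an $\varepsilon$-small diffeomorphism $G_{-1}$ whose derivative along $\mathcal{O}(x)$ is the homothety $\mathcal{A}_{-1}$. Near $x$ the return map $G_{-1}^{\pi(x)}$ is then $C^1$-conjugate to a uniform contraction, so no direction is distinguished as ``strong stable'' and any line through $x$ is an equally valid candidate. \emph{Second}, continue the path from $t=-1$ to $t=1$, but now with the freedom gained at the homothety stage: the eigendirection of the smaller eigenvalue of $\mathcal{A}_t$ can be steered to any prescribed direction $v$ at $x$ by choosing appropriate Franks-type perturbations realizing $\mathcal{A}_t$. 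At $t=1$ this produces an $\varepsilon$-perturbation $G$ for which $x$ is a periodic point with eigenvalues $(\lambda_1,1)$ and prescribed local strong stable tangent line. To realize any prescribed $\sigma=\sigma_1\cup\sigma_2$ (and not merely some curve in the correct isotopy class), use that $G=F$ outside $U$ together with the fact that $F^{-\pi(x)}|_D$ is expanding: a short local arc through $x$, tangent to $v$ and further shaped by a $C^1$-small higher-order modification of $G$ in $U$, sweeps out $W^{ss}(x,G)\cap D$ under iteration by $G^{-\pi(x)}$, and the intersection with a fundamental domain can be positioned so that its closure in $\TT_G$ is exactly $\sigma_1\cup\sigma_2$. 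The fact that $D$ is attracting for $G^{\pi(x)}$, together with $\lambda_2=1$, guarantees $D\subset\mathrm{Basin}(x)$.

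The hardest part is reconciling three competing requirements: the perturbation must be $\varepsilon$-small in $C^1$, supported in an arbitrarily small neighborhood of $\mathcal{O}(x)$, and yet must drive the \emph{global} object $W^{ss}(x,G)$ onto a specific $C^1$-curve that may be $C^1$-far from $W^{ss}(x,F)$. The reconciliation relies on the uniform backward expansion inside $D$: a short local arc near $x$ gets amplified to a long global curve, so the finite-dimensional freedom offered by the flexibility path plus local shaping in $U$ does, in principle, realize arbitrary isotopy classes of strong stable separatrices. Turning this amplification into a quantitative statement — verifying that, along the entire $\varepsilon$-flexibility path, one can simultaneously maintain $\mathrm{diam}(\mathcal{A}_t)<\varepsilon$, keep the support inside any preassigned neighborhood of $\mathcal{O}(x)$, and yet hit the exact prescribed $\sigma_1\cup\sigma_2$ — is the main technical labour of the proof.
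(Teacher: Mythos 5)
The paper does not prove this statement: Theorem~\ref{t.flexible} is imported verbatim from \cite[Theorem~1.1]{BS}, as the text says explicitly just before the statement (``which is proved in \cite[Theorem~1.1]{BS}''). There is therefore no internal proof against which to compare your attempt, and you should be aware that whatever you write would need to reprove the main technical result of an entire separate paper.

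That said, your sketch does identify the correct mechanism --- pass to the return map on $D$, realize the first half of the flexibility path via Franks' lemma to reach a homothety, use the resulting rotational symmetry to steer the eigendirection while traversing the second half of the path, and then leverage the backward expansion $F^{-\pi(x)}$ to amplify local control near $x$ into global control of $W^{ss}(x,G)\cap D$. This is morally the right strategy. But your argument has a genuine, self-acknowledged gap at exactly the place where the real content lies. Prescribing a tangent direction $v$ at $x$ plus a ``$C^1$-small higher-order modification'' does \emph{not} obviously realize an arbitrary embedded $\sigma_1\cup\sigma_2$ in the prescribed isotopy class: the fundamental annulus lies outside the support $U$, so the arc of $W^{ss}(x,G)$ there is obtained by pushing $W^{ss}(x,G)\cap U$ through the fixed map $F^{-\pi(x)}$; you must therefore solve an inverse problem --- take the prescribed $\sigma$, iterate it forward by $F^{\pi(x)}$ into $U$, and then build $G$ inside $U$ whose local strong stable curve matches that forward image exactly, while simultaneously keeping the perturbation $\varepsilon$-small and the derivative along $\mathcal{O}(x)$ following the flexibility path. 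You flag precisely this reconciliation as ``the main technical labour'' and stop. In a paper that cites the result precisely because the proof is long and delicate, leaving that labour undone is not a minor loose end; it is the theorem. As written your proposal is a plausible roadmap, not a proof.
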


\begin{rema}
In \cite[Theorem~1.1]{BS}, the theorem is stated 
for $C^1$-diffeomorphism of surfaces. Since the perturbation 
we obtain is a local one, as we stated above,
the same result is also true for local diffeomorphisms 
(diffeomorphisms on their images).
\end{rema}

The orbit of $x$ for $G$ is a non-hyperbolic attracting periodic point, 
having an eigenvalue equal to $1$. 
By an extra, arbitrarily small perturbation, 
we can change the index of $x$ such that the strong stable 
manifold becomes the new stable manifold. 
Therefore we have the following (see Corollary~1.1 in \cite{BS}).
\begin{coro}\label{c.flexible} Under the hypotheses of Theorem~\ref{t.flexible},  
there is an $\varepsilon$-perturbation $H$ of $F$, supported 
in an arbitrarily small neighborhood of $x$
such that the following holds:
\begin{itemize}
 \item $x$ is a periodic saddle point having two real eigenvalues $0<\lambda_1<1<\lambda_2$, with
 $\lambda_2$ arbitrarily close to $1$;
\item $W^s(x)\setminus \cO(x)$ is disjoint from the maximal invariant set $\Lambda_H$;
\item $W^{s}(x, H)$ projects to 
$(\sigma_1 \cup \sigma_2) \subset \TT_{H}\simeq \TT_F$.
\end{itemize}
 \end{coro}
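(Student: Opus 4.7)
The plan is to apply Theorem~\ref{t.flexible} to get a diffeomorphism $G$ for which $x$ is non-hyperbolic attracting with the correct projection, and then perform an additional $C^1$-small local perturbation which changes the degenerate eigenvalue $\lambda_2=1$ into a new eigenvalue slightly greater than $1$, turning $x$ into a hyperbolic saddle while essentially leaving the strong stable manifold intact.

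First, I would fix some $\varepsilon'<\varepsilon$ and invoke Theorem~\ref{t.flexible} with the prescribed curves $\sigma_1\cup\sigma_2$ (or a small isotopic perturbation thereof, which will be justified at the end). This yields an $\varepsilon'$-perturbation $G$ of $F$, supported in an arbitrarily small neighborhood $V$ of $\cO(x)$, such that $G=F$ along $\cO(x)$, $x$ is a non-hyperbolic attracting periodic point of $G$ with eigenvalues $\lambda_1\in(0,1)$ and $\lambda_2=1$, the disc $D$ lies in the basin of $x$ for $G$, and $W^{ss}(x,G)$ projects onto the chosen isotopic representative in $\TT_G\simeq\TT_F$.

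Second, I would realize a $C^1$-small perturbation $H$ of $G$ supported in a still smaller tubular neighborhood of $\cO(x)$ contained in $V$, obtained by composing $G$ with a fiberwise linear map that rescales along the weak eigendirection by a factor slightly greater than $1$, leaving the strong stable direction and its eigenvalue $\lambda_1$ untouched. This is a standard Franks-type perturbation that preserves the orbit pointwise and replaces $\lambda_2=1$ by a new eigenvalue $\lambda_2'>1$ arbitrarily close to $1$. By taking $\varepsilon'$ small enough the total $C^1$-distance from $F$ to $H$ stays within $\varepsilon$, so $x$ becomes a hyperbolic saddle with eigenvalues $0<\lambda_1<1<\lambda_2'$, giving the first conclusion.

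Since $H$ agrees with $G$ outside a small neighborhood of $\cO(x)$ and the local stable manifold $W^s_{\mathrm{loc}}(x,H)$ coincides with (a $C^1$-small perturbation of) $W^{ss}_{\mathrm{loc}}(x,G)$, the global stable manifold $W^s(x,H)$ agrees with $W^{ss}(x,G)$ outside a small neighborhood of $\cO(x)$. Hence its projection in $\TT_H\simeq\TT_F$ is isotopic to $\sigma_1\cup\sigma_2$, and by choosing the isotopic representative used in the first step suitably, the projection matches $\sigma_1\cup\sigma_2$ exactly. The hardest point, and the main obstacle I would expect, is the disjointness $(W^s(x,H)\setminus\cO(x))\cap\La_H=\emptyset$. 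The key observation is that $H^{-\pi(x)}$ restricted to $W^s(x,H)$ expands arclength from $\cO(x)$ by a factor tending to $1/\lambda_1>1$; any $y\in W^s(x,H)\cap D$ with $y\notin\cO(x)$ has a unique backward orbit on $W^s(x,H)$ whose arclength distance to $\cO(x)$ grows without bound, and since $D$ is bounded, finitely many backward iterations push it outside $D$, so $y\notin\La_H$.
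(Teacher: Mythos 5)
Your overall approach matches the paper's: the paper cites Corollary~1.1 of [BS] and, in the sentence preceding the statement, sketches exactly your two-step plan --- apply Theorem~\ref{t.flexible} to get the degenerate attracting orbit with the prescribed strong stable manifold, then make a further small (Franks-type) perturbation to change the index so that the strong stable manifold becomes the stable manifold. Your handling of the projection (via agreement of $H$ with $G$ on the fundamental domain, so $\TT_H\simeq\TT_G\simeq\TT_F$ by the same identification) is correct.

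The gap is in the disjointness argument. You argue that $H^{-\pi(x)}$ expands arclength along $W^s(x,H)$ at a rate tending to $1/\lambda_1>1$, and conclude that the arclength distance to $\cO(x)$ "grows without bound," so the backward orbit leaves $D$. This has two problems. First, the rate $1/\lambda_1$ is only the \emph{local} expansion rate at $\cO(x)$; there is no uniform lower bound on the expansion along $W^s(x,H)$ away from the orbit, so arclength need not grow geometrically (or at all) once backward iterates leave a neighborhood of $\cO(x)$. Second, even granting unbounded arclength, a curve of unbounded length can remain inside the bounded disc $D$ (it could spiral or accumulate), so "arclength $\to\infty$" does not yield "leaves $D$." The step that actually makes the argument close is to observe that $\Lambda_H$ depends upper semicontinuously on $H$, that $\Lambda_G=\{x\}$ on each disc in the orbit of $D$ (since $D$ lies in the basin of $x$ for $G$), and hence $\Lambda_H$ can be forced inside an arbitrarily small neighborhood of $\cO(x)$ by taking the Franks-type perturbation small enough. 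On such a small neighborhood the local expansion rate along $W^s(x,H)$ \emph{is} uniformly $>1$, so the backward orbit of any $y\in W^s_{\mathrm{loc}}(x,H)\setminus\cO(x)$ escapes that neighborhood in finitely many steps, and therefore escapes $\Lambda_H$; invariance of $\Lambda_H$ then gives $y\notin\Lambda_H$, and for general $y\in W^s(x,H)\cap D$ one reduces to this local case by first iterating forward. Without this use of the smallness of $\Lambda_H$, the purely arclength-based reasoning does not close.
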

 
 \subsection{Ejecting a flexible periodic point:  proof of Proposition~\ref{p.kicking}}
 Now, by  Corollary~\ref{c.flexible}
 (which is proved by Theorem~\ref{t.flexible} and),
 let us finish the proof of Proposition~\ref{p.kicking}.

  \begin{proof}[Proof of Proposition~\ref{p.kicking}] 
Let ${\bf R} = \bigcup R_i$ be a partially hyperbolic Markov partition of 
a diffeomorphism $f$ and 
$x$ an $\varepsilon$-flexible periodic point of period $\pi(x)$
with large stable manifold.  
Let $D$ be the center-stable disc containing $x$. 
Note that by Lemma~\ref{l.large},
each point $f^i(x)$ also has large stable manifold for every $i$.

According to Lemma~\ref{l.obstacle}, 
the intersection  $D\cap f^{\pi(x)}({\bf R})$ is a union of
finite number of disjoint discs, 
one of them being $f^{\pi(x)}(D)$. 
The projection 
 $\Delta_D\subset$ of 
 $(D\cap f^{\pi(x)}({\bf R}))\setminus f^{\pi(x)}(D)$ to the torus  $\TT_{D}$ 
 is a union of pairwise disjoint $C^1$-discs. 
Thus every homotopy class of simple closed curves in 
$\TT_D$ contains curves disjoint from this union. 
 We fix two disjoint simple curves 
 $\sigma_1,\sigma_2\subset \TT_D$ disjoint from $\Delta_D$ and 
 isotopic to the meridian 
 (i.e., the projection of the strong stable separatrices of $x$). 

 We apply Corollary~\ref{c.flexible} to the restriction 
 $F$ of $f$ to the surface $S$ which is the union of center-stable discs 
 passing through points $f^i(x)$, $i\in\{0,\dots,\pi(x)-1\}$
 (notice these discs are mutually distinct, because each $f^i(x)$
 has large stable manifold).
 We obtain an $\varepsilon$-perturbation
 $H$ of $F$ satisfying the following conditions:
 \begin{itemize}
  \item $H$ coincides with $F$  on the orbit of $x$ and 
  out of an arbitrarily small neighborhood of the orbit of $x$ 
  (in particular $H^{{\pi(x)}}$ coincides with $F^{{\pi(x)}}$ 
  on $D\setminus   F^{\pi(x)}(D)$ and as a result 
  the orbit space $\TT_{H}$ is identified with $\TT_{F}$,
  as explained in the last subsection);
  \item $x$ is a saddle fixed point of $H^{\pi(x)}$ whose stable separatrices 
 are disjoint from the 
maximal invariant set of $H^{\pi(x)}$ in $D$ and 
their projections on $\TT_{F}= \TT_{H}$ are 
$\sigma_1$ and $\sigma_2$. 
\end{itemize}

 Then we realize $H$ as an $\varepsilon$-perturbation $h$ of $f$, 
 which coincides with $H$ 
 on the surface $S$  and with $f$
 out of an arbitrarily small neighborhood of 
 the orbit of $x$. 
 We take $H$ with its support sufficiently small such that 
 $h^{\pi(x)}({\bf R})=f^{\pi(x)}({\bf R})$ holds.
 By construction, 
 the stable separatrices of $x$ for $h$ are 
 disjoint from $(D\cap h^{\pi(x)}({\bf R}))\setminus h^{\pi(x)}(D)$ in the 
 fundamental domain 
 $D\setminus   H^{\pi(x)}(D)=D\setminus  F^{\pi(x)}(D) $.
 
 By the choice of $\varepsilon$,  
 the compact set 
 ${\bf R}$ is still a partially hyperbolic filtrating Markov partition for $h$ and 
 the point $x\in D$ is a hyperbolic saddle of $s$-index $1$ 
 of $h$ whose stable separatrices are disjoint from
 $(D\cap h^\pi({\bf R}))\setminus h^\pi(D)$. 
 Now Proposition~\ref{p.obstacle} implies 
 that the chain recurrence class of the orbit of $x$ is trivial, 
 which concludes the proof.  
 \end{proof}

 \subsection{Existence of flexible points with large stable manifolds }

In order to prove Theorem~\ref{t.wild} 
by Proposition~\ref{p.kicking} and Corollary~\ref{c.kicking}, 
the only thing we need to prove is
the existence of flexible points with large stable manifold
under the assumption of Theorem~\ref{t.wild}. 
We discuss it in this subsection. 

\begin{lemm}\label{l.flexiblelarge} 
Let ${\bf R} = \bigcup R_i$ be a partially hyperbolic filtrating Markov 
partition of a diffeormophism $f$ and
$\cU$ a $C^1$-neighborhood of $f$ in which one can find a 
continuation of ${\bf R}$ (see Lemma~\ref{l.continuation}).
Assume that there are  hyperbolic periodic points $p, p_1, q\in {\bf R}$ 
varying continuously with respect to $f\in \cU$ such that 
for every $g\in \cU$ they satisfy the following:
\begin{itemize}
 \item $p_g$ is of $s$-index $2$ and has large stable manifold;
 \item ${p_{1,g}}$ is of $s$-index $2$ and has complex (non-real) stable eigenvalues homoclinically related with $p$;
  \item $q_g$ is of $s$-index $1$ and $C(p_g)=C(q_g)$. 
\end{itemize}

Then, for any $\varepsilon>0$, there is a $C^1$-open and dense subset $\cD$ of $\cU$ such that every  
diffeomorphism $f\in \cD$ has a periodic point 
$x\in C(p)$ of $s$-index $2$, with large stable manifold, $\varepsilon$-flexible, 
homoclinically related with $p$, and whose orbit is $\varepsilon$-dense in $C(p)$.
\end{lemm}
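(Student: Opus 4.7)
The plan is to reduce the lemma to the prevalence theorem for $\varepsilon$-flexible periodic points from \cite{BS}, and then graft on the required geometric refinements using Lemma~\ref{l.large2} together with standard density arguments. First I would verify that the hypotheses of that prevalence result hold robustly over $\cU$, namely that the two-dimensional bundle $E^{cs}$ restricted to the homoclinic class of $p$ is non-hyperbolic and admits no dominated splitting. Non-hyperbolicity is immediate from the robust equality $C(p_g,g)=C(q_g,g)$ combined with $\mathrm{ind}(p_g)\neq\mathrm{ind}(q_g)$, which prevents $E^{cs}$ from being uniformly contracting on $C(p_g,g)$. The absence of a dominated splitting follows because $p_{1,g}$ has complex (non-real) stable eigenvalues, so $Dg^{\pi(p_1)}|_{E^{cs}_{p_{1,g}}}$ has no real invariant line; consequently no invariant set containing $p_{1,g}$ carries a dominated splitting inside $E^{cs}$, and the robust homoclinic relation with $p_g$ transfers this to all of $H(p_g,g)$.

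Next I would invoke the prevalence theorem of \cite{BS}: under these two $C^1$-robust conditions one obtains a $C^1$-open and dense subset $\cD_0\subset\cU$ such that every $g\in\cD_0$ has an $\varepsilon$-flexible periodic point of $s$-index $2$ in $H(p_g,g)$. I would then refine the choice of this periodic point using two classical genericity facts, valid on a residual (hence, after the usual open-dense trick, on an open dense) subset of $\cU$: $C^1$-generically $C(p_g,g)=H(p_g,g)$, and periodic points homoclinically related to $p_g$ whose orbits are $\varepsilon$-dense in $H(p_g,g)$ form a dense subset of the homoclinic class (obtainable by concatenating transverse heteroclinic connections, or directly via the Bonatti--Crovisier connecting lemma). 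The derivative-cocycle perturbation producing flexibility in \cite{BS} is $C^1$-small and supported in an arbitrarily small tubular neighborhood of the orbit, so it does not destroy preexisting transverse homoclinic intersections; the three requirements (flexibility, homoclinic relation with $p$, $\varepsilon$-denseness in $C(p_g,g)$) can therefore be arranged simultaneously.

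The remaining geometric requirement, largeness of $W^s(x)$, comes for free from Lemma~\ref{l.large2}: it supplies a neighborhood $U$ of $p$ and a $C^1$-neighborhood $\cV\subset\cU$ of $f$ such that every periodic point of $H(p_g,g)$ whose orbit meets $U$ has large stable manifold. Taking $\varepsilon$ smaller than the diameter of $U$ forces the $\varepsilon$-dense orbit of $x$ to enter $U$, so $x$ inherits largeness from $p_g$. Openness of $\cD$ follows by starting from $\varepsilon/2$-flexibility and using persistence under $C^1$-perturbation of hyperbolic continuations, large stable manifolds (already noted after Definition of largeness), transverse homoclinic intersections, and the cocycle diameter bound defining flexibility. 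The main obstacle I expect is the simultaneous realization of $\varepsilon$-flexibility and $\varepsilon$-denseness: the perturbative step of \cite{BS} operates on a single chosen periodic orbit, so one must first produce, $C^1$-densely in $\cU$, a periodic orbit homoclinically related to $p$ whose orbit is $\varepsilon$-dense in $C(p)$ and located where the lack of dominated splitting on $E^{cs}$ is quantitatively manifest, then apply the flexibility perturbation without disturbing the previously established $\varepsilon$-density and homoclinic relations; checking that these two demands can be met in tandem is where the proof will concentrate.
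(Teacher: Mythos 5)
Your proposal identifies all the right ingredients — the use of $p_1$'s non-real stable eigenvalues to kill the dominated splitting on $E^{cs}$, the use of Lemma~\ref{l.large2} to obtain largeness of the stable manifold from $\varepsilon$-denseness (the orbit must pass near $p$), and the fact that proving $C^1$-density suffices since all properties are open for slightly smaller $\varepsilon$. You also correctly diagnose the real difficulty in your final paragraph: one cannot directly quote the prevalence theorem of \cite{BS} and then ``refine'' the flexible point afterwards, because the $\varepsilon$-flexibility perturbation is tied to one chosen orbit and the three requirements (flexibility, $\varepsilon$-density, largeness) must be arranged on the \emph{same} orbit.

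However, you leave that crux unresolved (``checking that these two demands can be met in tandem is where the proof will concentrate''), and your first two paragraphs — invoking the prevalence theorem as a black box and then trying to graft on $\varepsilon$-density via the connecting lemma — would not close the argument: the prevalence result hands you \emph{some} flexible point, not one of your choosing, and modifying the orbit afterwards to be $\varepsilon$-dense would destroy the cocycle flexibility. The paper resolves exactly this tension by opening up the proof of \cite{BS} and rerunning two of its intermediate steps in a controlled order: first, Proposition~6 of \cite{BS} (building on \cite{ABCDW}) produces after a small perturbation a periodic point $y$ homoclinically related to $p$ whose orbit is $\varepsilon/3$-close to $C(p)$ in Hausdorff distance, with a stable eigenvalue near $1$ and smallest Lyapunov exponent below a fixed negative bound; one then takes a hyperbolic basic set $\Lambda$ containing $y$, $p$, and $p_1$ (so $\Lambda$ is $\varepsilon/3$-dense and its two-dimensional stable bundle has no dominated splitting because of $p_1$); finally, Proposition~7 of \cite{BS} perturbs to make $\varepsilon$-flexible points appear inside the continuation of $\Lambda$ with orbits Hausdorff-close to $\Lambda$, hence $\varepsilon$-dense in $C(p)$. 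Largeness then follows as you indicated from Lemmas~\ref{l.large2} and \ref{l.large}. So your sketch is right in spirit, but the decisive ordering of the perturbations — density \emph{first}, encapsulated in a basic set, flexibility \emph{second} and localized inside that basic set — is precisely the step that needs to be supplied, and the paper does so by quoting the two internal propositions of \cite{BS} rather than the top-level prevalence theorem.
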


\begin{proof} 
First notice that, if $x$ satisfies the announced properties for some $0<\varepsilon'<\varepsilon$ and a diffeomorphisms
$f\in\cU$ then there is a $C^1$-neighborhood of $f$ such that the continuation of $x$ satisfies the announced properties 
for the same $\varepsilon$. 
Thus, it is enough to prove the $C^1$-density of $f$ (in $\cD$) having a periodic point 
satisfying the properties as claimed. 

Remember that
\cite[Theorem 2]{BS} already announced the generic existence  of 
flexible periodic points whose orbits are $\varepsilon$-dense
in the chain recurrence class of $p$. 
The novelty here is that we require
these flexible points have large stable manifolds.  
Therefore we cannot apply directly the statement of  \cite[Theorem 2]{BS}, 
and we need to go back to two important steps of its proof.

First, \cite[Proposition 6]{BS} (essentially based on \cite{ABCDW}) asserts that any $f\in \cU$ is 
$C^1$-approximated by 
a diffeomorphism $g$ for which 
$p$ is homoclinically related with a periodic point
$y$ with the following properties:
\begin{itemize}
\item  $y$ has a stable eigenvalue arbitrarily close to $1$;
\item the smallest Lyapunov exponent of $y$ is strictly 
 smaller than a given negative constant $\lambda_f<0$ 
 (which only depends on $f$);
\item  the orbit of $y$ is arbitrarily close  (say $\varepsilon/3$-close)
to the chain recurrence class $C(p)$ in the Hausdorff distance.
\end{itemize}

Then, let us consider 
a hyperbolic basic set $\Lambda$ 
containing $y$ and (the continuations of) $p$ and $p_1$. 
As the orbit of $y$
is $\varepsilon/3$-dense in $C(p)$, it also holds for the hyperbolic set $\La$. By the upper semicontinuity of the chain recurrence class, 
this $\varepsilon/3$-density in $C(p)$ persists under small perturbations. 

This hyperbolic basic set $\Lambda$ has the stable bundle of dimension $2$.
Since $p_1$ has non-real stable eigenvalues,
this stable bundle does not admit any dominated splitting.
Now \cite[Proposition 7]{BS} asserts 
that $\Lambda$ admits arbitrarily small perturbations such that 
the continuation of $\Lambda$ has 
$\varepsilon$-flexible points $x_n$ whose 
orbits are arbitrarily close to $\Lambda$ in the Hausdorff distance 
and in particular are $\varepsilon$-dense in $C(p)$.  

Now we can complete the proof by Lemma~\ref{l.large2}:
the orbit of $x_n$ passes through a small neighborhood of 
$p$ which has large stable 
manifold.  Therefore Lemma~\ref{l.large2} ensures that the orbit of $x_n$ has a point with large stable manifold, and Lemma~\ref{l.large} ensures that this property holds for every point in the orbit of $x_n$. 
\end{proof}

\subsection{End of the proof of Theorem~\ref{t.wild}}

Now let us finish the proof of Theorem~\ref{t.wild}
by Proposition~\ref{p.kicking}, Corollary~\ref{c.kicking} and 
Lemma~\ref{l.flexiblelarge}.

\begin{proof}[Proof of Theorem~\ref{t.wild}]
We consider  a diffeomorphism $f$    
with a filtrating partially hyperbolic
Markov partition ${\bf R} = \bigcup R_i$ of either saddle type or attracting type such that there are:
\begin{itemize}
 \item  an $s$-index $2$ periodic point $p\in {\bf R}$ with large stable manifold,
\item  an $s$-index $2$ periodic point $p_1$ homoclinically related with $p$ 
having a complex (non-real) stable eigenvalue, and
\item  a periodic point $q$ of $s$-index $1$ which 
is $C^1$-robustly in the chain recurrence class $C(p,f)$.
\end{itemize}


We apply Lemma~\ref{l.flexiblelarge} to $f$: 
for any $\varepsilon>0$, 
an arbitrarily small perturbation produces $\varepsilon$-flexible points
homoclinically related with $p_1$ (hence with $p$), 
having large stable manifold, 
and whose orbit is $\varepsilon$ dense in $C(p_1)=C(p)$. 

Then Proposition~\ref{p.kicking} and Corollary~\ref{c.kicking} 
allow us to create the announced points 
$x_g$ and $y_g$ (of $s$-index $1$ and $2$-respectively) with trivial chain recurrence classes 
and whose orbits are $2\varepsilon$
close to $C(p, f)$ with respect to the Hausdorff distance.
\end{proof}


\section{Examples}

In this section, we prove Proposition~\ref{p.exa},
that is, we show how we construct examples 
which satisfy the hypothesis of Theorem~\ref{t.wild}
with volume hyperbolicity.
We also briefly discuss the proof of Corollary~\ref{c.wild1}
at the end of this section.

\subsection{Outline of the construction}
The construction is done by the following two Propositions.

\begin{prop}\label{p.exphmp}
For every closed $3$-manifold $M$ 
there exists a diffeomorphism $f$ of $M$ 
having a mixing hyperbolic filtrating Markov partition 
${\bf R} = \bigcup R_i $ (of saddle type or attracting type) such that 
the following holds:
\begin{itemize}
\item ${\bf R}$ contains a non-trivial homoclinic class $H(p)$
where $p$ is a hyperbolic fixed point of $f$ in ${\bf R}$;
\item $p$ has large stable manifold.
\end{itemize}
Such examples can be taken so that the maximal invariant set in ${\bf R}$ is 
a transitive hyperbolic set of $s$-index two. In particular, it admits a uniformly 
hyperbolic splitting $E^s \oplus E^u$ with $\dim(E^s)=2$.
\end{prop}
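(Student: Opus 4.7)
My plan is to build the required diffeomorphism first as a local model in a small domain, and then embed it into any closed $3$-manifold $M$ using the fact that every closed $3$-manifold contains an embedded $3$-ball. Two classical constructions handle the two types separately: a three-dimensional Smale horseshoe for the saddle type, and the Smale solenoid for the attracting type. Both produce uniformly hyperbolic, transitive invariant sets of $s$-index $2$ whose natural splittings have $\dim E^s = 2$.

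For the saddle type, I take a single cylinder $R_1 = \DD^2 \times [0,1]$ and define $\tilde f$ on a neighborhood so that it strongly contracts the horizontal disc direction, strongly expands the vertical direction, and bends the resulting stretched tube into a $U$-shape whose two legs cut $R_1$ as disjoint vertical sub-rectangles, with the fold of the $U$ lying outside $R_1$. The maximal invariant set $\La = \bigcap_n \tilde f^n(R_1)$ is then a transitive hyperbolic horseshoe with splitting $E^s \oplus E^u$ of the correct dimensions; the incidence matrix is $[2]$, hence mixing. The fixed point $p$ coming from the ``straight'' leg has $W^s(p)$ containing the horizontal disc $\DD^2 \times \{z_p\}$ through it, which is precisely the center-stable disc through $p$, so $p$ has large stable manifold, and its homoclinic class is all of $\La$, hence is non-trivial.

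For the attracting type, I use the solid torus $T = \DD^2 \times S^1$ with the classical solenoid map $f(w,\theta) = (\alpha w + \beta e^{2\pi i \theta}, 2\theta \bmod 1)$ for $\alpha$ sufficiently small. I cut $T$ along the two cross-sections $\theta = 0$ and $\theta = 1/2$ to obtain two cylinders $R_1, R_2$ which are pairwise adjacent along both of their lid components. A direct computation shows that for each $j$ the image $f(R_j)$ meets $R_1$ and $R_2$ in exactly one vertical sub-rectangle each, producing the all-ones $2\times 2$ incidence matrix, which is mixing. The solid torus $T$ itself is an attracting region (so one may take $R=M$), the solenoid is transitive and uniformly hyperbolic of $s$-index $2$, and the fixed point again has $W^s$ containing the horizontal disc through it.

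Finally, to realize these local models inside any closed $3$-manifold $M$, I arrange the local diffeomorphism to be the identity outside a compact region contained in a $3$-ball $B$, then embed $B$ into $M$ and extend by the identity; all properties of the Markov partition depend only on the dynamics near ${\bf R}$, so they persist. The main obstacle will be in the saddle case: one must explicitly produce the attracting region $A$ and the repelling region $R$ with $A\cap R = R_1$ so that $\partial_s R_1 \subset \partial A$ and $\partial_l R_1 \subset \partial R$. The standard way is to supplement the horseshoe with a Morse--Smale structure on the supporting ball (a sink below the cylinder and a source above it, together with escape channels along which the bending tube exits and re-enters); then $A$ is the complement of a small neighborhood of the source and $R$ is the complement of a small neighborhood of the sink. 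The attracting case avoids this issue since the repelling region may simply be taken to be $M$.
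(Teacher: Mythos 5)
Your saddle-type construction is essentially the same as the paper's in spirit, just realized directly in $3$ dimensions rather than as a $2$-dimensional $S^2$-horseshoe crossed with an interval contraction; both yield the incidence matrix $[2]$ and the filtrating structure by completing with a sink and a source, so that part is fine.

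The attracting-type construction has a genuine gap. Cutting the solid torus at $\theta=0$ and $\theta=1/2$ gives two cylinders whose intersection $R_1\cap R_2$ has \emph{two} connected components (the cross-sections at $\theta=0$ and at $\theta=1/2$), and moreover each shared lid boundary of $R_1$ \emph{coincides exactly} with the corresponding lid boundary of $R_2$. Both of these violate the adjacency clause in Definition~\ref{d.attracting}, which requires that for non-disjoint $R_i$, $R_j$ the intersection $R_i\cap R_j$ be \emph{exactly one} connected component of one of the lid boundaries, \emph{contained in the interior} of a component of the other. Since the torus direction is cyclic, no two-piece decomposition can satisfy the ``exactly one shared lid component'' requirement, regardless of shrinking; you need at least three cylinders so that each adjacent pair meets along a single cross-section, and then an additional shrinking argument (differing radii) to replace equality by strict inclusion. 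The paper's proof uses four arcs $P_0,\dots,P_3$ of length $1/4$, then shrinks $P_1$ and $P_3$ slightly in the $\mathbb{D}^2$-direction: with four pieces the adjacency graph is bipartite, so an alternating shrink makes every shared lid component lie in the interior of its unshrunk neighbor. (One also checks that the union is still an attracting region, which follows because the solenoid map contracts the disc factor strongly.) This subtlety is precisely why the paper flags ``we need an additional argument on the shape of rectangles''; your two-piece partition skips it and does not meet the definition.

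Two further small remarks. Your phrase ``pairwise adjacent along both of their lid components'' already contradicts the meaning of \emph{adjacent} in Definition~\ref{d.attracting}, so be careful when reusing the paper's terminology. Also, in the saddle case the phrase ``$A$ is the complement of a small neighborhood of the source and $R$ is the complement of a small neighborhood of the sink'' does not by itself produce regions whose boundaries contain, respectively, $\partial_s R_1$ and $\partial_l R_1$; you still need to choose $A$ and $R$ so that the cylinder is \emph{exactly} $A\cap R$ with the prescribed boundary splitting, which is why the paper prefers to take the product of the classical planar filtration for the $S^2$ horseshoe with the interval.
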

\begin{rema}\label{r.exphmp}  Indeed, for proving Proposition~\ref{p.exphmp}, 
we will construct an attracting ball $B^3$ containing the announced mixing hyperbolic filtrating Markov partition: 
then we embed it into any $3$-manifold.
Furthermore, for the attracting type, we can take the Markov partition in such a way that 
the whole ball is contained in the basin of attraction of it.
\end{rema}

To state the second proposition, 
recall that two hyperbolic periodic points $p_1, p_2$ of a 
diffeomorphism is said to have a \emph{heterodimentional cycle} if they have different 
indices and $W^s(p_1) \cap W^u(p_2)$ and $W^s(p_2) \cap W^u(p_1)$ are both non-empty.

\begin{prop}\label{p.pericre}
Let ${\bf R} = \bigcup R_i $ be a partially hyperbolic filtrating Markov partition of
saddle type or attracting type satisfying the conclusion of 
Proposition~\ref{p.exphmp}. Then, we can find a diffeomorphism $g$ 
which also satisfies two conditions in the conclusion of 
Proposition~\ref{p.exphmp}
(with the same ${\bf R} = \bigcup R_i $ being a partially hyperbolic filtrating Markov partition)  
and the following conditions:
\begin{enumerate}
\renewcommand{\labelenumi}{(\arabic{enumi})}
\item There exists a hyperbolic periodic point $p_1$ homoclinically related with $p$ such that
$Dg^{\pi(p_1)}(p_1)|_{E^{s}}$ has two contracting 
complex (non-real)
eigenvalues (where $\pi(p_1)$ denotes the period of $p_1$). 
\item There exists a hyperbolic periodic point $q$ of $s$-index $1$ 
which has a heterodimensional cycle associated with $p$.
\end{enumerate}
Furthermore, $g$ can be taken so that its maximal invariant set in ${\bf R}$
is volume hyperbolic.
\end{prop}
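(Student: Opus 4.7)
\medskip

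\noindent\textbf{Proof proposal for Proposition~\ref{p.pericre}.}

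The overall strategy is to start from the diffeomorphism $f$ provided by Proposition~\ref{p.exphmp}, whose maximal invariant set $\Lambda_f=\bigcap_{n\in\ZZ}f^n({\bf R})$ is a transitive hyperbolic basic set of $s$-index $2$ with uniform splitting $E^s\oplus E^u$, and to produce $g$ by two successive $C^1$-small perturbations localized on well-chosen periodic orbits. Because $\Lambda_f$ is uniformly hyperbolic with $\dim E^s=2$, $Df|_{E^s}$ contracts vectors uniformly and hence contracts areas uniformly, so $\Lambda_f$ is tautologically volume hyperbolic. By Lemma~\ref{l.continuation} and the $C^1$-robustness of volume hyperbolicity, it therefore suffices to work inside a sufficiently small $C^1$-neighborhood $\cU$ of $f$: any $g\in\cU$ will automatically possess the same partially hyperbolic filtrating Markov partition ${\bf R}=\bigcup R_i$, the continuation of $p$ will still have large stable manifold, and the continuation of the maximal invariant set in ${\bf R}$ will remain volume hyperbolic.

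To produce $p_1$, pick a periodic orbit $\cO(p_1)\subset \Lambda_f$ of long period $\pi$ whose orbit stays close to $p$ so that, in $\Lambda_f$, it is homoclinically related with $p$; such orbits are abundant by the transitivity of $\Lambda_f$. Apply Franks' lemma along $\cO(p_1)$ to post-multiply one step of the cocycle $Df^{\pi}(p_1)|_{E^s}$ by a small rotation, turning the two real contracting eigenvalues into a pair of complex conjugate contracting eigenvalues. The required rotation is of order $\pi^{-1}$, so the perturbation is $C^1$-small and can be taken inside $\cU$; the homoclinic relation with $p$ is $C^1$-robust and therefore survives.

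To produce $q$ and the heterodimensional cycle with $p$, first use a second Franks-type perturbation on another long periodic orbit $\cO(q_0)\subset\Lambda_f$ having a very weak stable eigenvalue; such orbits exist by the Birkhoff–Sigmund variational principle in basic sets, and weakness can be amplified by choosing the period large enough. A $C^1$-small modification of $Df^{\pi(q_0)}|_{E^s}$ pushes this weak eigenvalue through $1$, so that $q_0$ becomes a hyperbolic periodic point $q$ of $s$-index $1$ with one contracting and one weakly expanding eigenvalue in $E^{cs}$; the product of the two $E^{cs}$-eigenvalues stays well below $1$, so volume hyperbolicity is preserved. In the resulting diffeomorphism, $W^s(p)$ and $W^u(q)$ have respective dimensions $2$ and $2$ in a $3$-manifold and intersect transversely (they already did inside the basic set before the index change). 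The remaining intersection $W^u(p)\cap W^s(q)$ is of codimension $1$, hence non-transverse; it can be realized by a final, localized $C^1$-small application of the Hayashi/Bonatti–Crovisier connecting lemma applied to a heteroclinic pseudo-orbit which existed inside $\Lambda_f$ before the index change. Keeping the total perturbation inside $\cU$ yields $g$ with the desired heterodimensional cycle.

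The principal difficulty is concentrated in this last step: the index change of $q_0$ and the creation of the non-transverse branch $W^u(p)\cap W^s(q)$ must be coordinated, since the index change displaces the invariant manifolds of $q_0$, and one must ensure that after displacement they are still available to be connected to $W^u(p)$ by an arbitrarily small perturbation. This is handled by preparing the situation before the index change (choosing $q_0$ so that a specific branch of $W^s(q_0)$ already accumulates on $p$ along a heteroclinic orbit in $\Lambda_f$) and then performing the Franks' perturbation and the connecting-lemma perturbation simultaneously, in disjoint small tubes around $\cO(q_0)$ and around the heteroclinic orbit, so that both remain $C^1$-small and their combined effect keeps $g$ inside $\cU$. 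All other requirements (partial hyperbolicity, mixing of the Markov partition, large stable manifold of $p$, volume hyperbolicity, and homoclinic relation of $p_1$ with $p$) then follow at once from the $C^1$-robustness statements of Section~\ref{s.statement}.
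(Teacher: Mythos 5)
Your proposal rests on the claim that all the needed modifications can be performed as $C^1$-small Franks-type perturbations, keeping $g$ inside a small neighborhood $\cU$ of $f$, so that the partial/volume hyperbolicity and the Markov partition survive by mere $C^1$-robustness. That reduction does not work here, and the paper takes exactly the opposite route. Two of your steps are incorrect.

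First, the claim that a rotation ``of order $\pi^{-1}$'' distributed along a long orbit creates complex stable eigenvalues fails whenever $E^s$ carries a dominated sub-splitting $E^{ss}\oplus_{<}E^{ws}$. Having such a dominated splitting on the maximal invariant set is a $C^1$-robust property, and it is incompatible with a periodic orbit having non-real stable eigenvalues; for the saddle-type example of Proposition~\ref{p.exphmp} (horseshoe times contraction) $E^s$ does have such a dominated sub-splitting, so a $C^1$-small perturbation simply cannot create $p_1$. Second, and more fundamentally, in a uniformly hyperbolic basic set $\Lambda_f$ every periodic orbit has its stable eigenvalues bounded away from $1$ by the uniform hyperbolicity constants, so there are no ``orbits with a very weak stable eigenvalue'' to start from, and increasing the period only pushes eigenvalues \emph{further} from $1$. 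To change the $s$-index of a periodic orbit by a perturbation of size $\varepsilon$ applied at each of the $\pi$ steps one changes the eigenvalue by a factor at most $(1+\varepsilon C)^\pi$; matching $e^{\lambda \pi}$ with $\lambda>0$ uniformly bounded below forces $\varepsilon$ to be bounded below independently of $\pi$. So the index change is necessarily a $C^1$-\emph{large} perturbation, and your appeal to the Birkhoff--Sigmund variational principle does not produce what you need.

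The paper explicitly accepts that the modification ``needs to be $C^1$-large in general'' (Remark~\ref{r.c-zero}), and instead of invoking robustness it controls the large modification by hand. The technical heart is Lemma~\ref{l.modif}: a local modification $\hat\Gamma$ in the center-stable direction, possibly $C^1$-large, built so as to send a given unstable cone into a prescribed slightly larger one, to expand vectors in that cone by a factor close to $1$, and (for area-preserving families) to almost preserve area in the $xy$-plane. Combined with Lemmas~\ref{l.coflat}, \ref{l.chi} and the modification families of Lemma~\ref{l.family}, this yields complex eigenvalues at $p_1$ and the index change at $q$ while keeping the cone-field invariance and (after the analysis of subsection 4.4, via the normal-bundle determinant comparison in Claim~\ref{p.volume}) the volume hyperbolicity. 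The heterodimensional cycle also comes for free without any connecting lemma: after using \cite[Proposition 2.5]{BDP} (which becomes available precisely because the complex eigenvalues at $p_1$ destroy domination inside $E^{cs}$) to get a homothety derivative, one places a heteroclinic point in $W^{ss}(q)\cap W^u(p)$ and performs a modification that converts $W^{ss}(q)$ into the new stable manifold of the $s$-index-$1$ saddle $q$, so the second branch of the cycle is automatic. None of this machinery can be replaced by $C^1$-small Franks perturbations plus robustness.
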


Let us first see how these two propositions conclude the 
construction of the examples. 
In the construction we use the 
following lemma from \cite{BDK} (see Theorem~1 of \cite{BDK}).
\begin{lemm}\label{l.BDK}
Let $f$ be a diffeomorphism having a heterodimensional cycle 
between two hyperbolic periodic points whose difference of stable indices is one. 
If one of them has non-trivial homoclinic class, 
then, by $C^1$-arbitrarily small perturbation, we can find a 
diffeomorphism such that (the continuations of) two periodic points 
belong to the same chain recurrence class $C^1$-robustly.
\end{lemm}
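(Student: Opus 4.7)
The plan is to produce a \emph{$C^1$-robust heterodimensional cycle} between (the continuations of) $p$ and $q$: once such a cycle exists, pseudo-orbits may be concatenated in both directions for every nearby diffeomorphism, which forces the continuations to lie in the same chain recurrence class $C^1$-robustly. So the statement reduces to the robust persistence of the cycle.

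Assume, without loss of generality, that $p$ has stable index $k+1$, $q$ has stable index $k$, and that $H(p)$ is non-trivial (the other case follows by applying the argument to $f^{-1}$). The intersection $W^s(p)\cap W^u(q)$ is generically transverse (sum of dimensions equals $\dim M+1$), so after an arbitrarily small $C^1$-perturbation it becomes $C^1$-robust; no further work is needed on that branch. The obstruction is the other branch: $W^u(p)\cap W^s(q)$ is only \emph{quasi-transverse}, the sum of dimensions being $\dim M-1$, and is therefore destroyed by generic perturbations. The whole difficulty is to make this quasi-transverse intersection robust.

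To this end I would use the non-triviality of $H(p)$ to find a hyperbolic basic set $\Lambda\ni p$, then pick a periodic orbit $p'\in\Lambda$ homoclinically related to $p$ and apply Franks' lemma along $\cO(p')$ to prescribe its derivative so that the center-stable direction of $p'$ has an eigenvalue profile (either complex contracting eigenvalues of comparable moduli, or two real contracting eigenvalues in a suitable ratio) adapted to a blender construction. Next, a further $C^1$-small perturbation supported in a small tubular neighborhood of one heteroclinic segment of the cycle unfolds $\cO(p')$ into a \emph{blender-horseshoe} $\Gamma$ that is still homoclinically related to $p$; by construction, the local unstable lamination of $\Gamma$ robustly intersects every $C^1$-disc of dimension $\dim W^s(q)$ whose tangent space lies in a prescribed cone. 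Since $W^s(q)$ passes through the blender's superposition region by the cycle assumption, and since $W^u(\Gamma)\subset\overline{W^u(p)}$ by the homoclinic relation, we obtain a $C^1$-robust intersection of $\overline{W^u(p)}$ with $W^s(q)$.

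Combining this with the robust transverse branch gives the desired robust heterodimensional cycle: pseudo-orbits may shadow $W^s(p)\cap W^u(q)$ to go from $q$ to $p$, and shadow the blender connection to return from $p$ to $q$, and both shadowings persist under $C^1$-perturbation. I expect the construction of the blender-horseshoe to be the main obstacle: one must arrange the Franks-type perturbation on $\cO(p')$ so that the resulting local dynamics admits a cone field with the covering property defining a blender, while keeping the perturbations small enough to preserve the already-established transverse intersection $W^s(p)\cap W^u(q)$ and the homoclinic relations inside $\Lambda$. Once the blender is in place the conclusion is immediate from the definition of chain recurrence.
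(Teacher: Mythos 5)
The paper does not actually prove this lemma: it is quoted verbatim from Theorem~1 of \cite{BDK}, and your strategy --- reduce to producing a $C^1$-robust heterodimensional cycle involving the continuations of $p$ and $q$, keep the transverse branch $W^s(p)\cap W^u(q)$ (robust after a local perturbation), and robustify the quasi-transverse branch by a blender homoclinically related to the saddle whose homoclinic class is non-trivial --- is precisely the strategy of that reference. Your preliminary reductions are sound: a robust cycle between hyperbolic sets containing $p_g$ and $q_g$ does force $C(p_g,g)=C(q_g,g)$ for all nearby $g$; the case where the other saddle carries the non-trivial class does reduce to yours via $f^{-1}$; and the dimension count for the blender (same $s$-index as $p$, unstable set behaving one dimension larger so as to robustly catch $k$-discs, $k=\dim W^s(q)$) is the right one.

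The genuine gap is that the two steps carrying all the difficulty are asserted rather than proved. First, a blender-horseshoe homoclinically related to $p$ is not obtained by a Franks-type prescription of the derivative along one orbit $\mathcal O(p')$ plus an unspecified perturbation in a tube around a heteroclinic segment: Franks' lemma only controls the linear cocycle along the orbit, whereas a blender is a semi-local nonlinear object (a Markov return with the superposition/covering property for a cone field), and building one that remains homoclinically related to $p$ requires the analysis of strong homoclinic intersections, of real versus complex central eigenvalues, and of the transition maps along the cycle that occupies most of \cite{BDK}. Second, the sentence ``Since $W^s(q)$ passes through the blender's superposition region by the cycle assumption'' is unjustified: the cycle only provides one quasi-transverse intersection point of $W^u(p)\cap W^s(q)$, located wherever it happens to be, and nothing guarantees that an iterate of $W^s(q)$ crosses the superposition region of a blender constructed near $\mathcal O(p')$ as a disc of the right dimension with tangent spaces in the activating cone. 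Arranging this activation with the continuation of $q$ itself (rather than some auxiliary saddle, which is all that the earlier robust-cycle results give) is exactly where the non-trivial-class hypothesis must be exploited, and it is the second half of the proof in \cite{BDK}. So the proposal is a correct road map of the cited argument, but the parts left as black boxes are the entire content of the lemma, and one of the bridging claims is false as stated.
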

\begin{proof}[Proof of Proposition~\ref{p.exa}]
First, by Proposition~\ref{p.exphmp}, for any $3$-manifold
we take a diffeomorphism $f$ with 
a partially hyperbolic (indeed, uniformly hyperbolic) filtrating Markov partition ${\bf R} = \bigcup R_i $
either saddle or attracting type 
satisfying the conclusion of Proposition~\ref{p.exphmp}.
Then, by applying Proposition~\ref{p.pericre}  to $f$, 
we obtain $g$ having a hyperbolic periodic point $p_1$  homoclinically related with $p=p_g$ and having
complex eigenvalues,  and a hyperbolic periodic point $q$ 
that has a heterodimensional cycle with $p = p_{g}$.
Then, by applying Lemma~\ref{l.BDK}
to the heterodimentional cycle between $p_{g_2}$ and $q$, 
we  get a diffeomorphism which satisfies all the previous properties 
and furthermore $q$ and $p$ being 
robustly in the same chain recurrence class.
This gives the announced diffeomorphism. 
\end{proof}

\subsection{Example of filtrating (partially) hyperbolic Markov partition: proof of Proposition~\ref{p.exphmp}}

The proof of Proposition~\ref{p.exphmp} follows by well known arguments, so we only give the sketch of it.
In the following, as announced in Remark~\ref{r.exphmp}, we construct attracting endomorphisms (which are diffeomorphisms to their images)
on three dimensional ball $B^3$ to itself containing the partially hyperbolic filtrating Markov partition.
The construction on given manifold can be done by extending it as a diffeomorphism.
 
For the construction of the example of saddle type, we 
take a structurally stable diffeomorphisms on 
the sphere $S^2$ whose non-wandering set consists of
the Smale's horseshoe, one source and one sink. 
By multiplying it by a contraction on the interval, one gets an attracting region diffeomorphic to 
$S^2\times[-1,1]$, which can be seen as a neighborhood of the boundary of the ball $B^3$. 
One can complete the dynamics on $B^3$ by adding a source. 
Note that one can embed this dynamics on any $3$-manifold. 
This gives us a mixing (hyperbolic) filtrating Markov partition of saddle type. 

For the attracting type, we take the Smale's solenoid 
attractor defined on a solid torus. 
It is not difficult to take such a diffeomorphism on $B^3$ 
so that $B^3$ is contained in the basin of attraction of the solid torus
(see the construction in \cite{BLY} based on \cite{Gi}).

To complete the construction of attracting type, 
we need an additional argument on the 
shape of rectangles. Remember that we require that 
for any non-empty intersection of two rectangles 
the lid boundary of one of the rectangles is contained in 
the interior of that of the other. In order to get this property 
we  make a tricky choice of rectangles as follows:
suppose $F: S^1 \times \mathbb{D}^2 \to S^1 \times \mathbb{D}^2$,
$F(\theta, x) = (2\theta, F_\theta(x))$ is a Smale's solenoid map. 
Then we divide $S^1 \times \mathbb{D}^2$ into four pieces
$P_i := 
\{(\theta+(i/4), x) \mid \theta \in [0, 1/4], x \in \mathbb{D}^2\}$,
$i = 0, 1, 2, 3$. Then we shrink $P_1$ and $P_3$ a little bit 
in $\mathbb{D}^2$ direction. Then the family of rectangles $\{ P_i\}$ gives 
the desired example. 

\begin{rema}
In the argument above, we started from the Smale's solenoid attractor. 
Note that the same construction works for 
any Williams type attractor (see \cite{W}).
\end{rema}

\subsection{Complex eigenvalues and heterodimensional cycle: the proof of Proposition~\ref{p.pericre}}
Let us prove Proposition~\ref{p.pericre}.
The proofs of (1) and (2) have similar structures. We 
take a periodic point and modify 
the behavior of the diffeomorphism near the periodic point 
in the center-stable direction 
to obtain the desired property. The modification itself is simple but 
we need careful control the differentials in order to guarantee that the modification 
does not destroy the partially or volume hyperbolic structure.
In this subsection, we prove Proposition~\ref{p.pericre} except the 
fact that the modification can be done preserving the volume hyperbolicity.
Since the proof of it involves subtle linear algebraic argument, we 
will discuss it in the next subsection.

\subsubsection{Auxiliary results}
We prepare auxiliary lemmas which will be used in the proof.
The following lemma guarantees the existence of convenient coordinates
around a periodic point  in a partially hyperbolic filtrating Markov partition.

\begin{lemm}\label{l.coflat}
Let ${\bf R} = \bigcup R_i $ be a partially hyperbolic filtrating Markov partition 
(of saddle or attracting type) of $f$,
$q \in {\bf R}$ a hyperbolic periodic point of $s$-index 2 and $\mathcal{C}^u$ a 
strictly invariant unstable cone field over ${\bf R}$.
Then, there exists a $C^1$-coordinate neighborhood $(\varphi, U)$
with a coordinate system $(x, y, z)$ around $q$ such that the following holds:
\begin{itemize}
\item the local stable manifold of $q$ is equal to the $xy$-plane;
\item the local unstable manifold of $q$ is equal to the $z$-axis;
\item for every $\bar{x} \in U$, the cone field $(D\varphi)(\mathcal{C}^u_{\bar{x}})$ 
contains $z$-direction and is transverse to the $xy$-plane;
\item for every $\bar{x} \in U$, 
the cone field $(D\varphi)\left(Df(\mathcal{C}^u_{f^{-1}(\bar{x})})\right)$ 
is well-defined and contains the $z$-direction
(note that because of the invariance we also know that this cone field is 
transverse to the $xy$-plane).
\end{itemize}
\end{lemm}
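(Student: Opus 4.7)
The plan is to build $\varphi$ by simultaneously straightening the local stable and unstable manifolds of $q$ and then to verify the two cone-field conditions by continuity. First I would apply the standard stable/unstable manifold theorem to the hyperbolic periodic point $q$: since $q$ has $s$-index $2$ and ${\bf R}$ carries the partially hyperbolic splitting $E^{cs}\oplus_< E^u$, the local stable manifold $W^s_{loc}(q)$ is a $C^1$ two-disc tangent to $E^{cs}(q)$ and the local unstable manifold $W^u_{loc}(q)$ is a $C^1$ arc tangent to $E^u(q)$. Since $E^{cs}(q)\oplus E^u(q)=T_qM$, these submanifolds meet transversely at $q$ and only at $q$ inside a small neighborhood.

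Next I would construct the chart by a standard tubular-neighborhood argument, producing a $C^1$-diffeomorphism $\varphi\colon U\to V\subset\RR^3$ with $\varphi(q)=0$, $\varphi(W^s_{loc}(q)\cap U)\subset\{z=0\}$, and $\varphi(W^u_{loc}(q)\cap U)\subset\{x=y=0\}$. In particular $D\varphi_q$ sends $E^u(q)$ onto the $z$-axis and $E^{cs}(q)$ onto the $xy$-plane, yielding the first two bullets directly.

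I would obtain the last two bullets by a continuity argument after shrinking $U$. The key observation is that strict invariance of $\cC^u$ forces, at every point $x$ of the maximal invariant set, the containment $E^u(x)\subset\mathrm{int}(\cC^u_x)$ and the separation $\cC^u_x\cap E^{cs}(x)=\{0\}$: the first follows from $E^u(x)=Df(E^u(f^{-1}(x)))\subset Df(\cC^u_{f^{-1}(x)})\subset\mathrm{int}(\cC^u_x)$, and the second from the uniform expansion on $\cC^u$ combined with the domination of $E^{cs}$ by $E^u$. Applied at $q$, this shows that $(D\varphi_q)(\cC^u_q)$ is a cone in $T_0\RR^3$ containing the $z$-axis in its interior and disjoint from the $xy$-plane; since $\bar{x}\mapsto(D\varphi_{\bar{x}})(\cC^u_{\bar{x}})$ is continuous in $\bar{x}$, shrinking $U$ makes the third bullet hold throughout $U$. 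For the fourth bullet I would shrink $U$ further so that $f^{-1}(U)\subset{\bf R}$ (which is possible because $f^{-1}(q)$ lies in the interior of some rectangle), note that $Df(\cC^u_{f^{-1}(q)})$ again contains $E^u(q)$ in its interior by strict invariance, and conclude by the same continuity argument applied to $\bar{x}\mapsto Df(\cC^u_{f^{-1}(\bar{x})})$.

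The main obstacle is essentially bookkeeping: ensuring that $U$ is small enough to simultaneously control that $\varphi$ is a well-defined $C^1$-chart on $U$, that $f^{-1}(U)\subset{\bf R}$ so that the cone field at $f^{-1}(\bar{x})$ is defined, and that the continuity of the two pushed-forward cone fields preserves both containment of the $z$-direction and disjointness from the $xy$-plane throughout $U$. None of these is conceptually hard, but the order in which the shrinkings are performed matters. The only genuine conceptual input is the uniform strict containment $E^u\subset\mathrm{int}(\cC^u)$ together with the separation of $\cC^u$ from $E^{cs}$, both of which reduce to strict invariance of $\cC^u$ combined with the uniform expansion it enjoys.
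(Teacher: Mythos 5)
Your proposal is correct, and since the paper explicitly omits the proof of this lemma (``The proof of this lemma is easy. So we omit it.''), there is nothing to compare it against; the route you describe --- straighten $W^s_{\mathrm{loc}}(q)$ and $W^u_{\mathrm{loc}}(q)$ to the $xy$-plane and $z$-axis, verify the cone conditions at the single point $q$, and propagate to a small neighborhood by continuity of $\cC^u$ and $D\varphi$ --- is the standard and surely intended one.

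One point to tighten before writing this up: your one-line derivation
\[
E^u(x)=Df\bigl(E^u(f^{-1}(x))\bigr)\subset Df\bigl(\cC^u_{f^{-1}(x)}\bigr)\subset\mathrm{int}\bigl(\cC^u_x\bigr)
\]
already invokes the non-strict inclusion $E^u(f^{-1}(x))\subset\cC^u_{f^{-1}(x)}$, which is an instance of the very containment being argued. It is not circular, but the baseline inclusion $E^u\subset\cC^u$ along the maximal invariant set should be stated as a separate (standard) fact --- e.g., from $E^u(x)=\bigcap_{n\geq 0}Df^n\bigl(\cC^u_{f^{-n}(x)}\bigr)$, itself a consequence of domination and uniform expansion inside the cone --- after which your displayed computation legitimately upgrades it to strict containment. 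The remainder is sound: at the periodic point $q$ one has $E^{cs}(q)=E^s(q)$, so the separation $\cC^u_q\cap E^s(q)=\{0\}$ follows simply from expansion in the cone versus contraction on $E^s(q)$ (you do not even need domination here), and the final double shrinking of $U$ --- so that $U$ and $f^{-1}(U)$ lie in $\mathrm{int}(\mathbf R)$ and both pushed-forward cone fields remain strictly positioned --- gives the third and fourth bullets.
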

The proof of this lemma is easy. So we omit it.

%
%

\begin{rema}\label{r.strong}
In Lemma~\ref{l.coflat}, if $q$ has two different real contracting eigenvalues, we can choose the coordinate
so that the local strong-stable manifold of $q$ coincides with 
the $x$-axis. Furthermore, 
by taking a linear coordinate change and restricting it to a small neighborhood of $q$, 
we can also assume that the weak stable direction at $q$ is
equal to the $y$-direction.
\end{rema}

The following lemma guarantees the existence of 
a function having convenient behavior for the construction.

\begin{lemm}\label{l.chi}
For every $K>1$ and $\delta \in (0, 1)$, there exists 
$\alpha_0 \in (0, 1)$ such that for any 
$0<\alpha<\alpha_0$ there is 
a smooth diffeomorphism $\chi=\chi_{_{K,\delta,\alpha}} : [-1, 1] \to [-1 ,1]$ satisfying the following:
\begin{itemize}
\item $\chi (z) = K z$ for $z \in [-\alpha, \alpha]$;
\item $ 1 - \delta \leq \dot{\chi} (z) \leq K$ for every $z \in [-1, 1]$, where $\dot{\chi}(z)$ is 
the first derivative of $\chi(z)$;
\item for $z$ near $\pm 1$, $\chi (z) = z$;
\item $\max_{z \in [-1, 1]} |\chi (z) - z| \leq \alpha K$.
\end{itemize} 
\end{lemm}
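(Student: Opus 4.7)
The plan is to construct $\chi$ by prescribing its derivative $\dot{\chi}$ and then integrating. We set $\dot{\chi}\equiv K$ on $[-\alpha,\alpha]$ with $\chi(0)=0$, which forces $\chi(z)=Kz$ there. Making $\chi$ odd reduces the problem to defining $\dot{\chi}$ on $[\alpha,1]$ so that: (i) it matches the value $K$ at $z=\alpha$ smoothly, (ii) it equals $1$ in a neighborhood of $z=1$, (iii) it stays in $[1-\delta,K]$, and (iv) its integral over $[\alpha,1]$ equals $1-K\alpha$, ensuring $\chi(1)=1$.

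The key computation, which also pins down $\alpha_0$, is to check that the average slope required on $[\alpha,1]$, namely $s:=(1-K\alpha)/(1-\alpha)$, lies in the admissible range. The upper bound $s\leq K$ is automatic from $K>1$. The lower bound $s\geq 1-\delta$ rearranges to $\alpha(K-1+\delta)\leq \delta$, so it suffices to set $\alpha_0:=\delta/(K-1+\delta)$. Since $\delta<1<K$, this also ensures $\alpha_0<1/K$, so $K\alpha<1$ and $s\in(1-\delta,1)$ strictly.

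For $\alpha<\alpha_0$, fix a small auxiliary parameter $\varepsilon>0$ with $\varepsilon\leq \alpha/(K-1)$, and build $\dot{\chi}$ on $[\alpha,1]$ as a three-piece profile, smoothed by a standard bump construction: a monotone smooth transition from $K$ down to $s$ on $[\alpha,\alpha+\varepsilon]$, the constant value $s$ on $[\alpha+\varepsilon,1-\varepsilon]$, and a smooth transition from $s$ up to $1$ on $[1-\varepsilon,1]$. A mild adjustment of the two transition shapes (or the length $\varepsilon$) tunes the total integral to be exactly $1-K\alpha$. Extend $\dot{\chi}$ to $[-1,-\alpha]$ by reflection so that $\chi$ is odd, and integrate.

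Verification is then direct: $\dot{\chi}>0$ everywhere, so $\chi$ is a $C^\infty$ diffeomorphism; the bounds $1-\delta\leq \dot{\chi}\leq K$ hold by construction; and $\chi\equiv \mathrm{id}$ near $\pm 1$. For the proximity bound, on $[-\alpha,\alpha]$ one has $|\chi(z)-z|=(K-1)|z|\leq (K-1)\alpha<K\alpha$, and on $[\alpha,1]$ the function $\chi(z)-z$ starts at $(K-1)\alpha$, ends at $0$, and has derivative $\dot{\chi}-1$ which is positive only on the transition interval $[\alpha,\alpha+\varepsilon]$, contributing at most $(K-1)\varepsilon\leq \alpha$ additional overshoot; hence $|\chi(z)-z|\leq K\alpha$ throughout. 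The only genuinely constrained step is balancing the fixed integral against the narrow slope window, and this is precisely what the explicit choice of $\alpha_0$ encodes; everything else is routine smoothing.
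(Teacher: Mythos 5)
Your proof is correct and follows essentially the same approach as the paper's: the paper defines the odd piecewise-linear function $P$ with slope $K$ on $[0,\alpha]$ and slope $s=(1-K\alpha)/(1-\alpha)$ on $[\alpha,1]$ and then smooths the corners, which is exactly the same as prescribing $\dot\chi$ as a smoothed step profile and integrating. Your write-up is, if anything, more careful than the paper's one-line smoothing step: the paper's remark ``flatten it around $z=\pm1$'' silently changes the value at $1$, and you explicitly address the resulting integral-balancing issue and verify that the required compensation stays within the admissible slope window $[1-\delta,K]$ (since $s>1-\delta$ strictly for $\alpha<\alpha_0$), and you also give a clean argument (via $\varepsilon\le\alpha/(K-1)$) for the bound $|\chi(z)-z|\le K\alpha$ that the paper leaves implicit.
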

\begin{proof}
Take a continuous piecewise linear function 
\[ P(z) =
\begin{cases}
Kz  & (z \in [0, \alpha]), \\
((1-K\alpha)/(1-\alpha))(z-1)+1 & (z \in [\alpha, 1]), 
\end{cases}
\]
and define $P(z) :=-P(-z)$ for $z \in [-1, 0]$.
If $\alpha$ is sufficiently small,
we can see that $P$ satisfies all the conditions we claimed except at $z = \pm\alpha$ and near $z=\pm1$.
Then, by removing the corner at $z = \pm\alpha$ and flatten it around $z=\pm1$ appropriately, we obtain the function.
\end{proof}

\subsubsection{Local modifications (I): preservation of cones}
Proposition~\ref{p.pericre} announces the 
existence of modifications in the center-stable direction keeping 
the partially hyperbolic and volume hyperbolic structures.  In this subsection we give a 
general modification result preserving these structures.
The modifications  
will be done in local coordinates: in this section we work on $\mathbb{R}^3$.

First, we  fix a bump function 
$$\rho(t) : \mathbb{R} \to [0, 1], \quad \mbox{ satisfying }\quad
\rho|_{[-1/2, 1/2]} = 1 \; \mbox{ and }\; \rho(t) = 0 \; \mbox{ for }\;|t| > 1.$$
We fix a constant $C_\rho$ which is an upper
bound of the derivative of $\rho$.

We start from the following definition.

\begin{defi} We say that a one-parameter family $\{ \Gamma_t\}_{t\in[0,1]}$ of diffeomorphisms of $\RR^2$ 
is a \emph{modification family} if it satisfies the following properties: 
 
\begin{itemize}
 \item every $\Gamma_t$ is supported in  the unit disc $\mathbb{D}^2$;
 \item  the family $\{\Gamma_t\}_{t\in [0,1]}$ is jointly smooth with respect to $x, y$ and $t$;
 \item $\Gamma_0=\mathrm{Id}_{\mathbb{R}^2}$;
 \item for every $t\in[0,1]$, $\Gamma_t$ preserves the origin, that is, $\Gamma_t (0, 0) = (0, 0)$ for every $t$.
\end{itemize} 
 
 We say that the modification family $\{\Ga_t\}$ is \emph{area preserving} if for every $t\in[0,1]$ the diffeomorphism
 $\Ga_t$ is area preserving.
 
\end{defi}

Given a modification family
$\{\Gamma_t\}_{t \in [0, 1]}$,
by putting $\tilde{\Gamma}(x, y, z) := (\Gamma_{\rho (z)}(x, y), z)$, we  obtain 
a diffeomorphism of $\mathbb{R}^3$, supported on $\mathbb{D}^2 \times [-1, 1]$ 
and $\tilde{\Gamma}|_{\mathbb{D}^2 \times \{ 0\}} = \Gamma_1$. 
However, it may be that $\tilde{\Gamma}$ has large differentials and 
that may cause the destruction of hyperbolic structures.
The following lemma shows that we can realize the behavior of $\Gamma_1$
in the center-stable direction keeping hyperbolic structures,
by adding an extra modification.

\begin{lemm}\label{l.modif}
Let $\cC^u_1,\cC^u_2$ be two cones of $\RR^3$ 
strictly containing the $z$-direction,  
transverse to the $xy$-plane, and $\cC^u_1$ is strictly contained in $\cC^u_2$.
Let $\{\Gamma_t\}_{t\in[0,1]}$ be a modification family.
Then, given $\eta >0$ there exists a diffeomorphism $\hat{\Gamma} :\mathbb{R}^3 \to \mathbb{R}^3$ 
such that the following holds:
\begin{enumerate}
\renewcommand{\labelenumi}{(\arabic{enumi})}
\item $\hat{\Gamma}$ is supported on $\DD^2\times [-1,1]$;
\item for every $\bar{x} \in \mathbb{R}^3$, $D\hat{\Gamma}(\bar{x})(\mathcal{C}^u_1)$ 
is strictly contained in $\cC^u_2$;
\item for every $\bar{x} \in \mathbb{R}^3$ and 
every unit vector $u \in \mathcal{C}^u_2({\bar{x}})$,  
we have $\| D\hat{\Gamma}(\bar{x})(u)\| > 1 -\eta$ (note that the same holds 
for every unit vector in $\mathcal{C}^u_1$);
\item $\hat{\Gamma}$ preserves the $xy$-plane (that is, the plane $\{z=0\}$).  
Furthermore, the restriction
$\hat{\Gamma}|_{\RR^2 \times \{ 0\}}$ is conjugated to  $\Gamma_1$ by a homothety of $\RR^2$;
\item $\hat{\Gamma}$ preserves the $z$-axis 
(note that this, together with the third item, implies that the 
restriction of $\hat{\Gamma}$ to the $z$-axis has its derivative greater than $1-\eta$).
\end{enumerate}
Furthermore, if  the modification family $\{\Gamma_t\}$ is  area preserving,
then we can choose $\hat{\Gamma}$ such that the following holds:
$$|J_{xy}\hat{\Gamma}-1| < \eta,$$
where $J_{xy}\hat{\Gamma}$ is the determinant of 
the differential restricted to the $xy$-plane.
\end{lemm}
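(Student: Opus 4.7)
The plan is to build $\hat{\Gamma}$ by composing two independent modifications of the naive suspension $\tilde{\Gamma}(x,y,z) = (\Gamma_{\rho(z)}(x,y), z)$: a horizontal rescaling that tames the $\partial_z$-derivatives, and a vertical reparametrization that compensates for the stretching of the $xy$-block. For a small parameter $\lambda > 0$ I set $\Gamma^\lambda_t(x,y) := \lambda\, \Gamma_t(x/\lambda,\, y/\lambda)$; this rescaled family is supported in $\lambda \mathbb{D}^2$, fixes the origin, has the same pointwise Jacobians as $\Gamma_t$, and crucially its $t$-derivative carries the small factor $\lambda$. For parameters $K > 1$, $\delta, \alpha > 0$ and the function $\chi = \chi_{K,\delta,\alpha}$ from Lemma~\ref{l.chi}, together with a bump $\rho_2 : \mathbb{R}^2 \to [0,1]$ supported in $\mathbb{D}^2$ and equal to $1$ on $\lambda\mathbb{D}^2$, I would define
\[
\hat{\Gamma}(x,y,z) := \bigl(\Gamma^\lambda_{\rho(z/\alpha)}(x,y),\; z + \rho_2(x,y)\bigl(\chi(z)-z\bigr)\bigr).
\]

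Conditions (1), (4), (5) follow directly: the support is contained in $\mathbb{D}^2 \times [-1,1]$ because $\chi(z) = z$ near $z = \pm 1$ and $\Gamma^\lambda_0 = \mathrm{id}$; the $xy$-plane is preserved since $\chi(0) = 0$, with restriction $\Gamma^\lambda_1 = h \circ \Gamma_1 \circ h^{-1}$ for the homothety $h(x,y) = (\lambda x, \lambda y)$; the $z$-axis is preserved because $\Gamma^\lambda_t$ fixes the origin. For conditions (2) and (3), a direct block-matrix computation of $D\hat{\Gamma}$ shows that the $(xy,xy)$-block equals $D\Gamma_{\rho(z/\alpha)}$ and is bounded by $M := \sup_t \|D\Gamma_t\|_\infty$; the $(3,3)$-entry is $1 + \rho_2(x,y)(\dot\chi(z) - 1)$, which lies in $[1-\delta, K]$; the off-diagonal $(xy,z)$-block has norm $O(\lambda/\alpha)$ thanks to the rescaling (it vanishes outside $\lambda\mathbb{D}^2$ where $\Gamma^\lambda_t = \mathrm{id}$); and the off-diagonal $(z,xy)$-block has norm $O(\alpha K)$ by the bound $|\chi(z) - z| \leq \alpha K$ from Lemma~\ref{l.chi}. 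Condition (3) then follows from $\dot\chi \geq 1 - \delta$ once $\delta < \eta$, and condition (2) reduces to the pointwise inequality $\|D\Gamma_{\rho(z/\alpha)}\|\,\alpha_1 < \alpha_2\, \dot\chi(z)$ up to the off-diagonal corrections, where $\alpha_1, \alpha_2$ denote the apertures of the cones.

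The hard part is arranging the parameters so that this cone inequality holds uniformly in $z$. The designs of $\rho$ and $\chi$ make this feasible: near $z = 0$, where $\rho(z/\alpha) = 1$ and $\Gamma_{\rho(z/\alpha)} = \Gamma_1$ may have large derivative, Lemma~\ref{l.chi} gives $\dot\chi = K$, which I choose at least $M\alpha_1/\alpha_2$ plus margin; near $|z| = 1$, where $\rho = 0$ and $\Gamma_{\rho(z/\alpha)} = \mathrm{Id}$, one has $\dot\chi = 1$ and the inequality becomes the given $\alpha_1 < \alpha_2$; in the transition regions continuity provides a uniform gap that absorbs the $O(\lambda/\alpha) + O(\alpha K)$ corrections. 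The plan is therefore to fix $K$ large depending on $M$ and the cone apertures, then $\delta < \eta$, then $\alpha$ small enough that $\alpha K < \eta$, and finally $\lambda$ small enough that $\lambda/\alpha$ is negligible relative to the gap produced by the previous choices. The area-preserving addendum follows from the same parameter choice: $\det D\Gamma^\lambda_t \equiv \det D\Gamma_t \equiv 1$, and the only contribution to $J_{xy}\hat{\Gamma} - 1$ comes from the $\rho_2(\chi - z)$-term, whose effect on the $xy$-restricted Jacobian is bounded by $O(\alpha K)$ and hence stays below $\eta$ by construction.
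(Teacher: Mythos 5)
Your proposal is correct and follows essentially the same route as the paper's own proof: both build $\hat{\Gamma}$ from a rescaled suspension of $\{\Gamma_t\}$ combined with a bump-localized application of the stretching function $\chi_{K,\delta,\alpha}$ from Lemma~\ref{l.chi}, with the parameters chosen in the same order $K$, $\delta$, $\alpha$, and finally the horizontal scale. The minor variations in your version (an independent $xy$-scale $\lambda$, giving an $(xy,z)$-block of size $O(\lambda/\alpha)$, versus the paper's isotropic conjugation by $H_{\alpha/2}$ which keeps that block merely bounded; and a single formula in place of the composition $L_{K,\delta,\alpha}\circ\tilde{\Gamma}_{\alpha/2}$) do not change the substance of the argument; just make sure $\rho_2$ is a fixed bump, e.g.\ equal to $1$ on $\tfrac12\DD^2$ with $\lambda<\tfrac12$, so that $\|D\rho_2\|$ does not blow up as $\lambda\to 0$.
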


\begin{proof}
First, remember that
$\tilde{\Gamma}(x, y, z) := \left( \Gamma_{\rho(z)}(x,y), z \right)$
is a smooth diffeomorphism of $\RR^3$ supported on the cylinder 
$\{(x, y, z) \mid r\leq 1, z\in[-1,1]\}$,  
where $r := \sqrt{x^2+ y^2}\,$. 
The diffeomorphism $\tilde\Ga$ preserves the $z$ coordinate, 
hence its derivative at each point preserves the $xy$-plane. 

We consider now the diffeomorphism of $\RR^3$ obtained by conjugating it by a homothety:
\[
\tilde{\Gamma}_{\varepsilon} = H_{\varepsilon} \circ \tilde\Ga \circ (H_{\varepsilon})^{-1},
\]
where $H_{\varepsilon}(x, y, z) := (\varepsilon x, \varepsilon y, \varepsilon z)$ 
and $\varepsilon$ is a constant in $(0, 1)$.
By definition, $\tilde{\Gamma}_{\varepsilon}$ is a diffeomorphism of 
$\mathbb{R}^3$ supported on the cylinder 
$\left\{r\leq \varepsilon, |z|\leq \varepsilon\right\}$. 
It preserves the $z$ coordinate and its derivative at a point $(x,y,z)$ is that of
$\tilde{\Gamma}$ at $\frac1\varepsilon(x,y,z)$.  
In particular, the derivative preserves the $xy$-planes and 
its norm is uniformly bounded independently of the choice of $\varepsilon$. 


Now, for any $K>1$, $\delta\in(0,1)$, and small $\alpha>0$,   we define a map 
\[
L_{_{K,\delta,\alpha}}(x, y, z) := (x, y, \rho(r) \chi_{_{K,\delta,\alpha}} (z) + (1- \rho(r)) z),
\]
where $\chi_{_{K,\delta,\alpha}}$ is a map which satisfies all the properties 
in Lemma~\ref{l.chi}.

For any real number $K$, we denote the linear map $(x,y,z)\mapsto (x,y,Kz)$ by $E_K$. 
Note that the following holds:
\begin{itemize}
 \item $L_{_{K,\delta,\alpha}}$ coincides with $E_K$ on the cylinder 
$\left\{r\leq\frac 12, z\in[-\alpha,\alpha]\right\}$;
\item the derivative  $DL_{_{K,\delta,\alpha}}(\bar x)$ preserves 
the $z$-direction for every $\bar x \in \mathbb{R}^3$;
\item the derivative $DL_{_{K,\delta,\alpha}}(\bar x)$ 
preserves the $xy$-plane for $\bar x$ in the (infinite) cylinder 
$\left\{r\leq \frac 12\right\}$.
\end{itemize}

Then, given a modification family $\{ \Gamma_t\}$, we define 
a diffeomorphism $\hat{\Gamma}_{K, \delta, \alpha}$ as follows:
\[
\hat{\Gamma}_{K, \delta, \alpha}(x, y, z) := 
L_{K,\delta,\alpha} \circ \tilde{\Gamma}_{\alpha/2}.
\]

The map $\hat{\Gamma}_{K, \delta, \alpha}$ is 
a smooth diffeomorphism of $\RR^3$ supported on the cylinder 
$\left\{ r\leq 1, z\in[-1,1] \right\}$. 
By construction, 
we can see that $\hat{\Gamma}_{K, \delta, \alpha}$ 
satisfies the conditions (1), (4) and (5) in
Lemma~\ref{l.modif}.
Now we choose the parameters $K, \delta$ and $\alpha$ so that $\hat{\Gamma}_{K, \delta, \alpha}$
satisfies the rest of the properties. 
To see this, we calculate the derivative of $\hat{\Gamma}_{K, \delta, \alpha}$.

First, we calculate $D\tilde{\Gamma}_{\alpha/2}= D(H_{\alpha/2} \circ \tilde \Ga \circ (H_{\alpha/2})^{-1}).$
Since the conjugation by a homothety does not have any effect on the calculation of the derivative, it can be written 
as follows:
 \begin{equation*}
\left(
\begin{array}{c|c}
 \vspace*{0mm}\raisebox{-4pt}{{\large\mbox{{{\large\mbox{{$A$}}}}}}($\frac 2\alpha\bar x$)} &  
 \raisebox{-4pt}{{\large\mbox{{$B$}}}$(\frac 2\alpha\bar x$)}\\[2ex]
  \hline
  0\quad\, 0 & 1
\end{array}
\right),
 \end{equation*}
where $A :=D_{xy}\Gamma_{\rho(z)}$, 
$B := \dot{\rho}(z)(D_{z}\Gamma_{\rho(z)})$. Note that these two matrices have bounded 
norms for fixed choice of the family $\{\Gamma_t\}$ and $\rho$. 
By this calculation we can see that this differential leaves the $xy$-plane
invariant. Furthermore, we can also see that if $\Gamma_t$ is area 
preserving, then this differential preserves the area of the $xy$-plane. 

We calculate the derivative $DL_{_{K,\delta,\alpha}}(\bar x)$. It can be written as follows:
\[
\left(
\begin{array}{c|c}
  \raisebox{-6pt}{{\Large\mbox{{$\mathrm{Id}$}}}} & 0\\ [-1ex]
   &  0\\\hline
 C(\bar x) & d(\bar x)
\end{array}
\right)
\]
where $\mathrm{Id}$ denotes the two dimensional identity matrix,
$$C(\bar x)=\left(\partial_x (\rho(r)) (\chi(z) -z ),\, \partial_y (\rho(r)) (\chi(z) -z )\right),$$
and $d(\bar x)= \rho(r) \dot{\chi}(z)+ (1-\rho(r)).$  
By definition of $\chi$ and $C_{\rho}$, 
one can see that the following inequalities hold:
\[
\|C(\bar x)\|\leq 2\alpha K C_\rho, \quad  1-\delta\leq d(\bar x)\leq K.
\]
   
Now, we calculate
$D\hat{\Gamma}_{_{K,\delta,\alpha}}$.
\begin{itemize}
 \item If $\bar {x} \in \{r\leq \frac\alpha 2, |z|\leq \frac\alpha2\}$, then
 $\bar x$ is in the support of $\tilde{\Gamma}_{\alpha/2}$
and $DL_{_{K,\delta,\alpha}} = E_K$. 
Hence, the derivative $D\hat{\Gamma}_{_{K,\delta,\alpha}}$ is 
 \begin{equation}\label{e.support}
\left(
\begin{array}{c|c}
 \vspace*{0mm}\raisebox{-4pt}{{\large\mbox{{$A$}}}($\frac 2\alpha\bar x$)} &  
 \raisebox{-4pt}{{\large\mbox{{$B$}}}$(\frac 2\alpha\bar x$)}\\[2ex]
  \hline
  0\quad 0 & K 
\end{array}
\right).
 \end{equation}
\item   If $\bar {x} \in \{r\leq \frac\alpha 2, |z|\geq \frac\alpha2\}$,
then $\bar{x}$ is outside the support of $\tilde{\Gamma}_{\alpha/2}$.
Furthermore, we know that $\rho \equiv 1$ on this region. 
Thus the derivative $D\hat{\Gamma}_{_{K,\delta,\alpha}}$ is
\begin{equation}\label{e.tube}
\left(
\begin{array}{c|c}
  \raisebox{-6pt}{{\Large\mbox{{$\mathrm{Id}$}}}} & 0\\ [-1ex]
   &  0\\\hline
  0 \quad 0& \dot{\chi}(z)
\end{array}
\right). 
\end{equation}
Recall that by definition we have 
$ 1-\delta\leq \dot{\chi}(z)\leq K$.

\item If $\bar{x} \in \{ r\geq \frac \alpha2\}$, 
 $\hat{\Gamma}_{_{K,\delta,\alpha}}=L_{_{K,\delta,\alpha}}$. 
Hence the derivative 
$D\hat{\Gamma}_{_{K,\delta,\alpha}}$ is
\begin{equation}\label{e.outside}
\left(
\begin{array}{c|c}
  \raisebox{-6pt}{{\Large\mbox{{$\mathrm{Id}$}}}} & 0\\ [-1ex]
   &  0\\\hline
 C(\bar x)& d(\bar x)
\end{array}
\right).
\end{equation}
Recall that we have
$\|C(\bar x)\|\leq 2\alpha K C_\rho$ and $1-\delta\leq d(\bar x)\leq K$.
\end{itemize}

By these calculations, we can conclude the following:
\begin{clai}\label{c.differ}
 Let $\cC^u_1,\cC^u_2$ be two cones of $\RR^3$  strictly containing the $z$ direction,  
 transverse to the $xy$-plane.
 Assume that  $\cC^u_1$ is strictly contained in $\cC^u_2$.
 
Then, for any $\eta>0$, there is $K>1$  and $\delta_0\in(0,1)$ such that  the following holds:
for any $0<\delta<\delta_0$, there is $0<\alpha_0\leq \alpha(K,\delta)$ such that 
for any $0<\alpha<\alpha_0$ 
the diffeomorphism $\hat{\Gamma}_{_{K,\delta,\alpha}}$ satisfies the 
conditions (2) and (3) in the conclusion of Lemma~\ref{l.modif}. Namely: 
\begin{itemize}
 \item For any $\bar x\in \RR^3$, the image $D\hat{\Gamma}_{_{K,\delta,\alpha}}(\bar x)(\cC^u_1)$ is strictly contained in $\cC^u_2$;
 \item For any $\bar x\in \RR^3$ and any unit vector $u\in\cC^u_2$  the norm of 
 $D\hat{\Gamma}_{_{K,\delta,\alpha}}(\bar x)(u)$ is larger than $1-\eta$:
 $$\|D\hat{\Gamma}_{_{K,\delta,\alpha}}(\bar x)(u)\|>1-\eta.$$
\end{itemize}
\end{clai}

\begin{proof}[Proof of Claim~\ref{c.differ}] 
Let us check that the announced properties hold on each domain. 

\begin{itemize}
\item If $\bar x$ belongs to the support of $\tilde{\Gamma}_{\alpha/2}$, that is, the region 
$\{r\leq\frac\alpha2, |z|\leq\frac\alpha2\}$,
then the derivative $D\hat{\Gamma}_{_{K,\delta,\alpha}}(\bar x)$ is given by (\ref{e.support}).
As $A(\frac 2\alpha \bar x)$ and $B(\frac 2\alpha \bar x)$ are uniformly bounded 
(independently of the choice of $\alpha$), 
we see that $D\hat{\Gamma}_{_{K,\delta,\alpha}}(\bar x)(\cC^u_1)$ 
is strictly contained in $\cC^u_2$ for $K$ large enough. Indeed, as $K$ goes to $+\infty$, 
the image $D\hat{\Gamma}_{_{K,\delta,\alpha}}(\bar x)(\cC^u_1)$ tends to the $z$ direction. 
Furthermore, for $K$ large, any vector in $\cC^u_2$ is expanded, in particular, each unit vector 
has its image longer than $1 -\delta$.

\item If $ \bar {x} \in \{r\leq\frac \alpha2, |z|\geq\frac\alpha2\}$, 
then the derivative $D\hat{\Gamma}_{_{K,\delta,\alpha}}(\bar x)$ is 
given by (\ref{e.tube}). It is a diagonal matrix
having the identity matrix in the $xy$-direction, 
and in the $z$ direction it is a multiplication at least by $1-\delta$.  Thus by choosing 
$\delta$ small enough, we have that $D\hat{\Gamma}_{_{K,\delta,\alpha}}(\bar x)(\cC^u_1)$ is contained 
in a cone arbitrarily close to $\cC^u_1$, hence contained in $\cC^u_2$. 
Furthermore, the vectors in $\cC^u_2$ are expanded at least by $1-\delta$. Hence,
by choosing $\delta$ to be smaller than $\eta$, one gets the second condition. 

\item If $\bar {x} \in\{r\geq \frac \alpha2\}$, then the derivative is given by (\ref{e.outside}). 
For fixed $K$ and $\delta$, 
when $\alpha$ goes to $0$ the non-diagonal term $C(\bar x)$ tends uniformly to $0$ (indeed,
it is bounded by $2\alpha K C_\rho$). Thus for $\alpha$ close to $0$, 
the derivative of $\hat\Ga$ almost preserves the $xy$-plane.
The diagonal terms  are the identity on the $xy$-plane and $d(\bar x)$ in the $z$ direction 
which satisfies $1-\delta\leq d(\bar x)\leq K$.  
Thus we can get the conclusion as in the second item.
\end{itemize}
As a result, given a constant $\eta$, by choosing $K$, $\delta$ and $\alpha$ appropriately 
in the order explained above, 
we have the desired properties.
\end{proof}

Finally, if 
$\{\Gamma_t\}$ is area preserving, then by choosing 
$\alpha$ sufficiently small, we can assume that the determinant of  $\hat{\Gamma}$ restricted 
to the $xy$-plane is arbitrarily close to $1$. 
To be more precise, the term $C(\bar x)$ is the unique trouble 
for the determinant not being exactly $1$, but we have seen that $C(\bar x)$ 
goes to $0$ when $\alpha$ tends to $0$.
Thus the proof of Lemma~\ref{l.modif} is completed.
\end{proof}



\subsubsection{Local modifications (II): existence of convenient families}
In this subsection, we discuss the existence of modification families 
with convenient properties for the construction.

\begin{lemm}\label{l.family}
There exists a modification family $\{\Gamma_t\}$ of $\mathbb{R}^2$
which is area preserving and 
satisfies one of the following conditions:
\begin{enumerate}
\renewcommand{\labelenumi}{(\arabic{enumi})}
\item $\|\Gamma_t(x, y)\| = \|(x, y)\|$ for every $t \in [0, 1]$ and every $(x, y) \in \mathbb{R}^2$,
where $\| \, \cdot \,\|$ denotes the distance from the origin. Furthermore, 
$D_{xy}\Gamma_1(x, y)$ is a rotation matrix of angle $\pi/2$ for every $(x, y)$. 
\item $\Gamma_1(x, y)$ preserves $x$-axis and 
$\|\Gamma_1(x, 0)\| \leq \|(x, 0)\|$
for every $x$. Furthermore, $D_{xy}\Gamma_1(0, 0) = 
\begin{pmatrix}
\beta & 0 \\
0 & \beta^{-1}
\end{pmatrix},$ where $\beta >1$ is a constant (which we can choose).  
\end{enumerate}
\end{lemm}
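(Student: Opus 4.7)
The plan is to prove each clause of Lemma~\ref{l.family} by giving an explicit construction: a radial twist for clause (1) and a Hamiltonian flow supported in a small disc for clause (2). Both mechanisms are classical and automatically produce area-preserving isotopies from the identity that fix the origin.

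For clause (1), I would work in polar coordinates $(r,\theta)$. Fix a smooth function $\psi\colon[0,+\infty)\to\mathbb{R}$ with $\psi\equiv \pi/2$ on a neighborhood $[0,r_0]$ of $0$ and $\psi\equiv 0$ on $[1,+\infty)$. Define the family
\[
\Gamma_t(r,\theta) := (r,\,\theta + t\psi(r)),\qquad t\in[0,1].
\]
All required properties are immediate: the map is jointly smooth in $(r,\theta,t)$, fixes the origin, is supported in the closed unit disc, and at $t=0$ equals the identity. In polar coordinates the Jacobian is $1$, so each $\Gamma_t$ is area-preserving, and because the angular component does not depend on $\theta$ the Euclidean norm is preserved for every $t$. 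On $\{r\le r_0\}$ the map $\Gamma_1$ is the rigid rotation by $\pi/2$, so its derivative is the rotation matrix of angle $\pi/2$ at every point of this neighborhood of the origin, which is all that is needed for the intended application to Lemma~\ref{l.modif} (where only the behaviour at the periodic point sitting at the origin matters).

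For clause (2), I would use a Hamiltonian flow. Let $\chi\colon[0,+\infty)\to[0,1]$ be a smooth cutoff with $\chi\equiv 1$ on a small interval around $0$ and $\chi\equiv 0$ outside $[0,1)$, and set
\[
H(x,y) := c\,xy\,\chi(x^2+y^2),
\]
where $c\in\mathbb{R}$ is a parameter to be chosen. Let $\Gamma_t$ be the time-$t$ map of the Hamiltonian vector field $X_H=(\partial_y H,-\partial_x H)$. Hamiltonian flows are area-preserving, $H$ has compact support in the unit disc, and $\Gamma_0=\mathrm{Id}$, so $\{\Gamma_t\}_{t\in[0,1]}$ is a modification family. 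A direct check shows $H(x,0)=\partial_x H(x,0)=0$, so $X_H$ is tangent to the $x$-axis, which is therefore invariant under the flow. The linearization at the origin is $DX_H(0,0)=\mathrm{diag}(c,-c)$, hence $D\Gamma_1(0,0)=\mathrm{diag}(e^c,e^{-c})$; taking $|c|=\log\beta$ produces the desired eigenvalues $\beta$ and $\beta^{-1}$.

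The main technical point I expect to be subtle is reconciling the norm inequality on the $x$-axis with the prescribed linearization. On the $x$-axis the ODE reduces to $\dot x = c\,x\,\chi(x^2)$, which is autonomous and monotone; when $c<0$, $|x|$ is non-increasing along the flow, so $\|\Gamma_1(x,0)\|\le\|(x,0)\|$ holds globally, whereas $c>0$ would expand $|x|$ near the origin. I would therefore fix the sign and the axis labelling so that the contracting eigenvalue lives on the $x$-axis and the expanding one on the $y$-axis, possibly composing with a final linear permutation of coordinates to present the linearization in the required order $\mathrm{diag}(\beta,\beta^{-1})$. Once these sign and orientation conventions are pinned down consistently, the remaining checks (smoothness, support, $\Gamma_0=\mathrm{Id}$, area preservation, invariance of the $x$-axis, global monotonicity of $|x|$ along the flow) are all routine; I do not foresee other obstacles.
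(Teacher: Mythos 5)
For clause (2) your construction is the paper's: the paper takes the Hamiltonian $\Xi(x,y)=xy\,\rho(r)$ and sets $\Gamma_t:=\Omega_{\log(\beta)t}$, which is your $H=c\,xy\,\chi(x^2+y^2)$ with $c$ absorbing the sign and scaling. For clause (1) you take a genuinely different route: the paper again uses a Hamiltonian flow (for $\Xi(x,y)=(x^2-y^2)\rho(r)$, $\Gamma_t:=\Omega_{(\pi/2)t}$), whereas you use a direct polar twist $\Gamma_t(r,\theta)=(r,\theta+t\psi(r))$. The twist map is arguably the cleaner presentation, since norm preservation and unit Jacobian are read off at once. (The sign in the paper's $(x^2-y^2)$ in fact appears to be a slip: that Hamiltonian generates a hyperbolic, not rotational, linear flow near the origin, so it does not preserve $\|(x,y)\|$; your construction is also the more obviously correct one.) You also rightly observe that the phrase ``for every $(x,y)$'' in clause (1) cannot hold literally for a compactly supported modification family; both constructions only give the $\pi/2$-rotation on a neighbourhood of the origin, which is all the application through Lemma~\ref{l.modif} uses.

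The one place your argument stops short is the resolution of the tension you flag in clause (2), and your tentative fix should be discarded. If $D_{xy}\Gamma_1(0,0)=\mathrm{diag}(\beta,\beta^{-1})$ with $\beta>1$, then near the origin $\Gamma_1$ expands the $x$-axis, so $\|\Gamma_1(x,0)\|>\|(x,0)\|$ for small nonzero $x$, contradicting the required inequality; this is an inconsistency in the statement itself, not a difficulty any construction can circumvent. Post-composing with a coordinate swap does not help either, because the swap also moves the preserved axis to the $y$-axis, so the clause ``$\Gamma_1$ preserves the $x$-axis and $\|\Gamma_1(x,0)\|\le\|(x,0)\|$'' fails after the swap. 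What the application in Proposition~\ref{p.pericre}(2) actually needs is $D_{xy}\Gamma_1(0,0)=\mathrm{diag}(\beta^{-1},\beta)$: the $x$-axis is the strong stable direction of $q$ and must stay contracting, while the weak stable $y$-direction must become expanding after composition so that $q$ acquires $s$-index~$1$; with $\beta$ larger than the inverse of the weak stable eigenvalue the composition has one eigenvalue larger than $1$ only in the $y$-direction. Your $c=-\log\beta<0$ construction already yields exactly this matrix and the norm inequality for free; present it that way, drop the permutation, and note that the matrix in the statement should read $\mathrm{diag}(\beta^{-1},\beta)$.
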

\begin{proof}
For the construction of modification family $\{\Gamma_t\}$ satisfying 
the condition (1) and area preserving property, we take a Hamiltonian function $\Xi(x, y) = (x^2 -y^2)\rho(r)$,
where $\rho$ is a smooth  bump function defined in the previous section.
By $\{\Omega_t\}$ we denote the time $t$ map of corresponding Hamiltonian vector field. 
Now $\Gamma_t := \Omega_{(\pi/2)t}$ gives us the desired family.

For (2), consider the Hamiltonian function $\Xi(x, y) = xy\cdot\rho(r)$ and put $\Gamma_t := \Omega_{\log(\beta) t}$. 
\end{proof}

%
%
%
%

\subsubsection{Realizing the diffeomorphisms of $\RR^3$ as the modification of $f$}
Let us start the proof of Proposition~\ref{p.pericre}. 

\begin{proof}[Proof of (1)] 
Let ${\bf R} = \bigcup R_i$ be a partially hyperbolic filtrating Markov partition of $f$
satisfying all the hypotheses
and let $\mathcal{C}^u$ be a strictly $Df$-invariant unstable cone field on $\bf R$.
Since $H(p)$ is non-trivial and $\bf R$ is a filtrating set, there 
exists a hyperbolic periodic point $p_1 \in {\bf R}$ homoclinically related with $p$.
We can assume that $Df^{\pi(p_1)}(p_1)|_{E^s}$ has 
positive determinant (see for example \cite[Lemma 4.16]{BDP}). 
If $Df^{\pi(p_1)}(p_1)|_{E^s}$ has a non-real eigenvalue, 
then we are done. So suppose that it does not.
By applying Lemma~\ref{l.coflat}, we choose local 
coordinates $\varphi$  in a neighborhood $U$ of $p_1$ in which the 
local invariant manifolds of $p_1$ are flat. By taking a linear coordinate 
change around $p_1$, we can assume that 
the two different stable eigendirections 
of $Df^{\pi(p_1)}(p_1)|={E^s}$ are orthogonal. 

Let $\cC^u_2$ and $\cC^u_1$ be the image of the unstable cone $\cC^u(p_1)$ and of $Df(\cC^u(f^{-1}(p_1)))$
under  $D\varphi$, respectively.
Take $\eta>0$ sufficiently close to $1$ such that $(1-\eta)^{-1}$ is strictly less than a lower bound $\mu$ for  
the rate of the expansion of vectors in the unstable cone $\cC^u(p_1)$.  
We apply Lemma~\ref{l.modif} to these cones, the family of diffeomorphisms $\{ \Gamma_t\}$
in Lemma~\ref{l.family} (1) and $\eta$ to obtain the diffeomorphism $\hat{\Gamma}$.

Notice that the same conclusion of Lemma~\ref{l.modif} holds for 
any pair of cones sufficiently close to $\cC^u_1$ and $\cC^u_2$. 
We choose $\varepsilon>0$ such that
for any $\bar x\in[-\varepsilon, \varepsilon]^3$  the cones $D\varphi(\cC^u(\varphi^{-1}(\bar x)))$ and 
$D\varphi(Df(\cC^u(\varphi^{-1}(\bar x))))$ are close enough to $\cC^u_2$ and $\cC^u_1$ respectively
so that the conclusions of Lemma~\ref{l.modif} still holds.

Now we define $g_0$ to be the diffeomorphism which coincides with   
$\varphi^{-1}\circ (H_\varepsilon \circ  \hat{\Gamma}\circ (H_\varepsilon)^{-1}) \circ\varphi\circ f$ on $f^{-1}(U)$ 
and with $f$ outside. 
By construction, 
the cone field $\cC^u$ is strictly invariant under $Dg_0$ and $Dg_0$ expands the vectors by a factor 
at least $(1-\eta)\mu>1$. So the cone field $\cC^u$ is not only strictly invariant but also unstable. 
Notice that for sufficiently small $\varepsilon$ the orbit of $p_1$ meets the support of 
$\varphi^{-1}\circ (H_\varepsilon \circ  \hat{\Gamma}\circ (H_\varepsilon)^{-1}) \circ\varphi$ only at the
point $p_1$.  Then a simple calculation shows that $p_1$ has stable complex eigenvalues.

Let us show that if $\varepsilon$  sufficiently small,
then $p_1$ keeps the homoclinic relation with $p$.
Indeed, in our coordinates, 
the local stable manifold $W^s_{\mathrm{loc}}(p_1,f)$ of $p_1$ is exactly equal to 
the $xy$-plane, which is preserved by this modification. 
For $\varepsilon$  small, 
the support of the modification does not intersect the images $f^i(W^s_{\mathrm{loc}}(p_1,f))$ for $0<i<\pi(p_1)$.  
Thus $W^s_{\mathrm{loc}}(p_1,f)=W^s_{\mathrm{loc}}(p_1,g_0)$ for every $\varepsilon$.  
Similarly, we have $W^u_{\mathrm{loc}}(p_1,f)=W^u_{\mathrm{loc}}(p_1,g_0)$.
Let $s^+\in W^s_{\mathrm{loc}}(p_1,f)\cap W^u(p,f)$ and $s^-\in W^u_{\mathrm{loc}}(p_1,f)\cap W^s(p,f)$
be heteroclinic points.  
For $\varepsilon$ small enough the support of modification is disjoint from the negative orbit of 
$s^+$ and of the 
positive orbit of $s^-$. Thus $s^+$ and $s^-$ are still heteroclinic points between 
$p$ and ${p_1}$.

Finally, since the support of the perturbation is contained in an arbitrarily small
neighborhood of $p_1$, this neighborhood can be chosen 
so that it is disjoint from   
$W^s_{\mathrm{loc}}(p,f)$, and hence $W^s_{\mathrm{loc}}(p,f)=W^s_{\mathrm{loc}}(p,g_0)$.
Thus, we can see that $p$ still has large stable manifold. 
Thus the proof is completed.
\end{proof}

Let us see the second modification.

\begin{proof}[Proof of (2)] 
We start from $f$ satisfying the conclusion of item (1). Remember that $p_1$ is a 
periodic point with (non-real) complex stable eigenvalues and $p$ is a hyperbolic periodic point 
homoclinically related with $p_1$ having large stable manifold.

According to \cite[Proposition 2.5]{BDP}, 
an arbitrarily $C^1$-small perturbation of $f$ produces a hyperbolic periodic point
$q$ of $s$-index $2$, homoclinically related with $p$ and $p_1$, and whose derivative in the 
period restricted to
the center-stable space is a contracting homothety. 
We fix a point of transversal heteroclinic intersection 
$x\in  W^s(q) \cap W^u(p)$. By an arbitrarily small perturbation of $f$ in an arbitrarily small neighborhood of
$q$ we can change slightly the derivative of $q$ in the period 
in such a way that it has two real eigenvalues of different 
moduli and (the continuation of) $x$ still 
belongs to the strong stable manifold  $W^{ss}(q)$ of $q$. 

Thus we now assume that $f$ itself has these properties. 
More precisely, $f$ has a periodic point $q$ homoclinically
related with $p$, with two stable real 
eigenvalues of different moduli, and a point $x$ of heteroclinic 
intersection belonging to the strong stable manifold of 
$q$.  

Then by applying Lemma~\ref{l.coflat}, together with Remark~\ref{r.strong},
we take a local coordinate $\phi\colon V \to \RR^3$  around $q$ such that
the local stable manifold coincides with the $xy$-plane, 
the local strong stable manifold coincides with the $x$-axis, 
the $y$ direction is the weak stable direction 
and the local unstable manifold coincides with the $z$-axis. 

Now we perform the modification as in the proof of (1):
we use Lemma~\ref{l.modif} for the cones 
 $D\phi(\cC^u(q))$ and 
$D\phi\left( Df(\cC^u(f^{-1}(q)))\right)$, $\eta$ as in (1) and 
$\{ \Gamma_t \}$ as in Lemma~\ref{l.family}(2) letting $\beta$ greater than the 
weak stable eigenvalue of $q$ to obtain the diffeomorphism $\hat{\Gamma}$ of $\mathbb{R}^3$.
Using this $\hat{\Gamma}$, we modify $f$ as follows:
$\phi^{-1}\circ (H_\varepsilon \circ  \hat{\Gamma}\circ (H_\varepsilon)^{-1}) \circ\phi\circ f$ on $f^{-1}(V)$ 
and keep intact outside. 

As $\beta$ is bigger than the inverse of the weak stable eigenvalue of $q$, 
we see that $q$ 
is now an $s$-index $1$ hyperbolic saddle.  By the similar argument as is in the proof of (1), 
by choosing sufficiently small $\varepsilon$, we can check the preservation of partially hyperbolic 
filtrating Markov partition structure, the largeness of the stable manifold through $p$
and the preservation of heteroclinic relation between $p$ and $q$. Once again by 
decreasing $\varepsilon$ if necessary, we can avoid the 
interference of the modification to the (transverse)
homoclinic intersection between $p$ and $p_1$. 
Thus for sufficiently small $\varepsilon>0$ the modification above gives us the desired diffeomorphism $g$.
\end{proof}

%

\begin{rema}\label{r.c-zero}
While the modification above needs to be $C^1$-large in general, by shrinking $\varepsilon$, 
we can assume that this modification is arbitrarily $C^0$-small. 
\end{rema}

\subsection{Preservation of volume hyperbolicity}

In this section we will prove the last part of Proposition~\ref{p.pericre}.
That is, we show that if the initial diffeomorphism $f$ is volume hyperbolic 
then we can perform 
the modification keeping the volume hyperbolicity. 
This will follow from the following two properties: 
 \begin{itemize}
  \item the modification we  perform can be done 
  so that the unstable bundle is almost preserved (this can be
  guaranteed by replacing the unstable cone field $\cC^u$ 
  by high forward image $Df^n(\cC^u)$).
  \item the modifications we perform almost preserve  the area of the horizontal subspaces.
 \end{itemize}

\subsubsection{Normal bundles and volume hyperbolicity}
 Let us explain why these two properties guarantee the 
 preservation of volume hyperbolicity. 
 Since we already proved the invariance of the cone field 
 and the uniform expansion of the vectors in the unstable cone in the previous subsection, 
 we certainly have the partial hyperbolicity, in particular,
 uniform expansion property of the unstable bundle.  
 It remains to prove the uniform contraction 
 of the area in the center-stable bundle. 
 This is not so easy because we do not know the position of the center-stable bundle after the modification:
 the only fact available is that it is transverse to the unstable cone.  
 we need to see how we can control the center-stable 
 determinant without controlling precisely the center-stable plane.
 
 To see this, we consider the following general situation.
 Let $f$ be a diffeomorphism of $M$ and $K \subset M$ 
 be a compact $f$-invariant set. 
 We assume that 
 $K$ is a maximal invariant set of some neighborhood $U$ of $K$
 and it is partially hyperbolic with a uniformly expanding, one-dimensional  unstable bundle 
 and a two-dimensional center-stable bundle $T_KM =E^{cs} \oplus E^{u}$.  
 Furthermore, we assume that 
 $K$ is also volume hyperbolic, 
 that is, the determinant of $Df$ restricted to the 
 center-stable bundle is uniformly contracting.  
Let $\cC^u$ be a strictly invariant unstable cone field defined on $U$, 
strictly containing the unstable direction and transverse to the 
center-stable direction at every point of $K$.
By definition we also know that the vectors in $\cC^u$ are uniformly expanding. 
Such a cone field always exists, by shrinking the neighborhood $U$ if necessary
(see Lemma~\ref{l.conenbd}). 


Let $\cP=\{P(x) \mid x\in U, P_x\subset T_xM\}$ 
be a continuous distribution of the same dimension as that of
center-stable bundle of $K$, and is transverse to $\cC^u(x)$. 
Note that $\cP$ defines a vector bundle over $U$. 

Now for $x\in K$ we define a linear map
$D(P,f)(x)\colon P(x)\to P(f(x))$ to be 
the one obtained as the composition of 
$Df(x)|_{P(x)}\colon P(x)\to T_{f(x)} M$ and 
$\mbox{Proj}_{\parallel_{E^u(f(x))}}P(f(x))$
(for two complementary vector subspaces $ V$ and $W$ in a vector space, 
by $\mbox{Proj}_{\parallel_{V}}W$ we denote the projection from $V \oplus W$
to $W$ along to $V$). 
The collection of maps  $D(P,f)(x)$ defines a linear cocycle 
$\cD_{_{\cP,f}}$ on the bundle obtained
restricting $\cP$ over $K$.

\begin{rema} The linear cocycle $\cD_{_{\cP,f}}$ is conjugated to the restriction 
of $Df$ to the center-stable bundle
$E^{cs}$ by a continuous bundle map inducing the identity map on the base space $K$
(i.e., a continuous family of linear maps $E^{cs}(x)\to P(x)$, $x\in K$).  
Indeed, this map is given by the projection along $E^{u}(x)$.

By the existence of the conjugation, 
we see that $\cD_{_{\cP,f}}$ is uniformly volume contracting. 
Thus we can choose a metric on $M$ and a 
constant $\lambda$ satisfying $0<\lambda<1$ such that for 
any $x\in K$ the determinant of the linear map $D(P,f)(x)$ with respect to orthonormal basis has  
absolute value bounded  by $\lambda$ from above. 
\end{rema}

\subsubsection{Modification and normal bundle}
Now, let us consider the effect of modifications in this setting.
Let $g=h\circ f$ be a diffeomorphism such that  $g$ strictly leaves the cone field $\cC^u$ invariant 
on $U$, expands uniformly the vectors in $\cC^u$.  
We assume that  the maximal invariant set $K_g$ of $g$ in $U$
is contained in a small neighborhood of  $K$.
We also assume that the unstable bundle $E^u_g$ is very 
close to $E^u= E^u_f$. More precisely, we require that 
for every $x\in K_g$ there exists a point $y\in K$ 
close to $x$ so that $E^u_g(x)$ is close  $E^u(x)$. 

Let us consider the situation where $h$ almost preserves the bundle $\cP$
and that the determinant of the restriction $Dh|_{\cP}$ is very close to $1$. 
In this setting, we have the following:
\begin{clai}\label{p.volume}
Under the above hypotheses, $g$ is volume hyperbolic on $K_g$.
\end{clai}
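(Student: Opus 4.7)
The plan is to reduce the volume hyperbolicity of $Dg|_{E^{cs}_g}$ on $K_g$ to the analogous property for a projected cocycle $\cD_{\cP,g}$ defined on the auxiliary bundle $\cP$, and then to estimate the determinant of $\cD_{\cP,g}$ by comparing it with $\cD_{\cP,f}$ via the factorization $g=h\circ f$.

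First I would define $\cD_{\cP,g}(x)\colon P(x)\to P(g(x))$ exactly as $\cD_{\cP,f}$ was defined, namely by projecting $Dg(x)|_{P(x)}$ onto $P(g(x))$ along $E^u_g(g(x))$; this is well-defined on $K_g$ because $E^u_g\subset\cC^u$ is transverse to $\cP$. Just as for $f$, the cocycle $\cD_{\cP,g}$ is conjugate to $Dg|_{E^{cs}_g}$ by the continuous bundle isomorphism given by projection along $E^u_g$, which has bounded distortion over the compact set $K_g$. Hence the desired volume hyperbolicity reduces to proving that $|\det\cD_{\cP,g}|$ is uniformly bounded above by some constant $\lambda'<1$.

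To obtain this bound, I would write $Dg(x)=Dh(f(x))\circ Df(x)$ and, for $v\in P(x)$, decompose $Df(x)v=\cD_{\cP,f}(x)v+e$ with $e\in E^u_f(f(x))$. The assumption that $h$ nearly preserves $\cP$ with area factor within $\eta$ of $1$ implies that $Dh(f(x))$ sends the first summand to a vector close to $P(g(x))$, contributing a multiplicative factor of at most $1+\eta$ to the determinant compared with $\cD_{\cP,f}$. The assumption that $E^u_g$ is close to $E^u_f$ implies that the projection of $Dh(f(x))e$ onto $P(g(x))$ along $E^u_g(g(x))$ is of order $\|e\|\cdot\mathrm{angle}(E^u_f,E^u_g)$, hence negligible. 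Combining these estimates yields
\[
|\det\cD_{\cP,g}(x)|\;\le\;|\det\cD_{\cP,f}(x)|\cdot(1+\eta)(1+\delta),
\]
where $\delta$ measures the failure of $E^u_g$ to coincide with $E^u_f$ along the orbit.

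Since by hypothesis $|\det\cD_{\cP,f}|\le\lambda<1$ uniformly on a neighborhood of $K$ containing $K_g$, taking $\eta$ and $\delta$ sufficiently small gives $|\det\cD_{\cP,g}|<\lambda'<1$ uniformly on $K_g$, which yields the claim. The main obstacle is ensuring that $\delta$ is genuinely small in the setting of the perturbation: this is achieved precisely by the first bullet item flagged in the text, namely by first replacing the initial cone field $\cC^u$ with a high forward iterate $Df^n(\cC^u)$, which can be made arbitrarily close to the line field $E^u_f$. Then $E^u_g\subset\cC^u$ lies within an arbitrarily narrow cone around $E^u_f$, so the projections along $E^u_g$ and $E^u_f$ differ by an arbitrarily small amount, giving the required control on $\delta$.
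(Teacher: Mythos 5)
Your proposal is correct and follows essentially the same route as the paper: reduce to the projected cocycle $\cD_{\cP,g}$, factor $Dg=Dh\circ Df$, and use the two hypotheses (that $Dh$ almost preserves $\cP$ with determinant near $1$, and that $E^u_g$ is close to $E^u_f$) to compare $\det\cD_{\cP,g}$ with $\det\cD_{\cP,f}$. The paper packages the comparison as an exact factorization of $D(P,g)(x)$ through $Dh$ and two projections (its Claim~\ref{c.proje}), whereas you decompose $Df(x)v$ into its $\cP$- and $E^u_f$-components and estimate the two resulting terms directly, but the underlying argument is the same.
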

\begin{proof} 
We take a linear cocycle 
$\cD_{_{\cP,g}}=\{D(P,g)(x) \mid  x\in K_g\}$,
where \[D(P,g)(x) := 
\left( \mathrm{Proj}_{\parallel_{Df(E^u_g(x))}}P(g(x)) \right) \circ Dg(x)|_{P(x)}.
\] 
Fix some constant  $\lambda_1$ satisfying 
$\lambda< \lambda_1<1$.   We show that the determinant of  $D(P, g)(x)$ has 
its absolute value bounded by 
$\lambda_1$ for every $x\in K_g$.

Given $x \in K_g$, by assumption, 
there exists a point $y\in K$ close to $x$ such that $E^u_g(x)$ is close to $E^u_f(y)$.
Thus $Df(E^u_g(x))$  is close to $Df E^u_f(y)=E^u_f(f(y))$. 
Furthermore, since $x$ is close to $y$, we see that $P(f(x))$ is close to $P(f(y))$.
Combining these, we see that the map
$\left( \mathrm{Proj}_{\parallel_{Df(E^u_g(x))}}P(f(x)) \right) \circ Df|_{P(x)}$ 
is very close to $D(P,f)$ and therefore its 
determinant is almost bounded by $\lambda$. 

Now recall that $Dh$ almost preserves $\cP$.
This implies that $Dh(P(f(x)))$ is very close to $P((h\circ f)(x))=P(g(x))$.  
As a consequence, the determinant the
restriction of 
$\left( \mathrm{Proj}_{\parallel_{E^u_g(g(x))}}P(g(x))\right)$ 
to $Dh(P(f(x)))$
is very close to $1$. 
Furthermore, the determinant of the restriction of  $Dh$ to $P(f(x))$ is assumed to be almost $1$. 

As a result, we deduce that the determinant of 
\[
\left(\mathrm{Proj}_{\parallel_{E^u_g(g(x))}}P(g(x))\right) \circ Dh 
\circ \left(\mathrm{Proj}_{\parallel_{Df(E^u_g(x))}}P(f(x))\right) \circ Df|_{P(x)}
\]
is bounded by some constant $\lambda_1 <1$ 
(which can be chosen independently of $x$). 

Then, notice that we have the following equality:
\begin{clai}\label{c.proje} For every $x \in K_g$, we have
\[
D(P,g)(x)= 
        \left(\mathrm{Proj}_{\parallel_{E^u_g(g(x))}}P(g(x)) \right)  \circ Dh 
\circ \left(\mathrm{Proj}_{\parallel_{Df(E^u_g(x))}}P(f(x)) \right) \circ Df|_{P(x)}.
\]
\end{clai}
 \begin{proof}
 This is a consequence of the following the general fact: let $U_i=V_i\oplus W_i$, $i=1,2,3$ be vector spaces and 
 $F\colon U_1\to U_2$, $H\colon U_2\to U_3$ and $G\colon U_1\to U_3$ 
 are linear maps satisfying $G=H\circ F$.  Assume that 
 $F(W_1)=W_2$ and $H(W_2)=W_3$.  Then 
\[
\left( \mathrm{Proj}_{\parallel_{W_3}}V_3 \right) \circ G =
\left( \mathrm{Proj}_{\parallel_{W_3}}V_3 \right) \circ H \circ 
\left( \mathrm{Proj}_{\parallel_{W_2}}V_2 \right) \circ F. 
\]
The equality that we want to show is the direct consequence of this general result.
\end{proof}
Since the determinant of the right hand side of Claim~\ref{c.proje} is bounded by $\lambda_1$,
so is the left hand side.
\end{proof}

\subsubsection{Conclusion of volume hyperbolicity}
Now, let us finish the 
proof of Proposition~\ref{p.pericre}.
\begin{proof}[Proof of the last part of Proposition~\ref{p.pericre}]
We perform the modification $g= h \circ f$ of
Proposition~\ref{p.pericre} to higher forward refinement of ${\bf R} = \bigcup R_i$.
Since, by taking forward iteration, the unstable cone $\cC^u$ converges to the unstable 
direction at each point of the maximal invariant set of ${\bf R}$, 
we can assume that the modification in Proposition~\ref{p.pericre}
preserves the unstable direction as much as we want. 
Note that, by the last item of
Lemma~\ref{l.modif}, we can assume that $h$ almost preserves the area of 
center-stable planes. 

Now we take a two dimensional distribution $\cP$ transverse to $\cC^u$ which coincides with 
the $xy$-plane in the support of the modification. Remember that the modification
$g = h \circ f$ can be done so that it is arbitrarily $C^0$-close to $f$, see Remark~\ref{r.c-zero}. 
As a result, 
we can assume that the  maximal invariant set $K_g$  is contained in arbitrarily small 
neighborhood of that of $f$. Thus, Claim~\ref{p.volume} ensures that for $g = h \circ f$ 
chosen as above has  maximal invariant set which is volume hyperbolic.
\end{proof}

\subsection{On the proof of Corollary~\ref{c.wild1}}

Let us see how we prove Corollary~\ref{c.wild1}. 

In Proposition~\ref{p.exphmp}, Remark~\ref{r.exphmp} and Proposition~\ref{p.pericre}, 
we have seen that we can construct a local map $f$ on the three-dimensional 
ball $B^3$ satisfying the following:
\begin{itemize}
\item it is a diffeomorphism on its image;
\item the ball is attracting, that is, $f(B^3) \subset \mathrm{int}(B^3)$ holds;
\item $f$ contains a mixing 
partially hyperbolic Markov partition ${\bf R}$ of attracting type which
satisfies the hypothesis of Theorem~\ref{t.wild} having volume hyperbolicity
on the maximal invariant set of ${\bf R}$;
\item the whole ball $B^3$ is contained in the basin of attraction of ${\bf R}$, 
that is, for every $x \in B^3$ there exists $n>0$ such that $f^n(x) \in {\bf R}$ holds.
\end{itemize}
Note that, because of these properties, 
the chain recurrence class $C(p)$ (remember that $p$ is the hyperbolic 
periodic point with large stable manifold)
is the unique quasi-attractor in $B^3$ in $C^1$-robust way (see Corollary~\ref{c.quasi}).

Now, for any $3$-manifold $M$ we can construct a diffeomorphism 
without (topological)
attractors or repellers as follows (see also section~4.2 of \cite{BLY}): 
take a Morse function of $M$ and its gradient flow. Then replace each 
sink by $(B^3, f)$ and each source by $(B^3, f^{-1})$ and glue them appropriately. 
Now by Corollary~\ref{c.isolated} we have the conclusion: 
there are finitely many quasi-attractors 
and quasi-repellers and, 
for $C^1$-generic diffeomorphisms in the neighborhood, 
every quasi-attractor and quasi-repeller is wild.


\vspace{1.5cm}

\begin{itemize}
\item[]  \emph{Christian Bonatti \quad} (bonatti@u-bourgogne.fr)
\begin{itemize}
\item[] Institut de Math. de Bourgogn\'{e} CNRS - URM 5584
\item[] Universit\'e de Bourgogne Dijon 21004, France
\end{itemize}
\item[] \emph{Katsutoshi Shinohara \quad} (herrsinon@07.alumni.u-tokyo.ac.jp)
\begin{itemize}
\item[] Department of Mathematics and Information Sciences,
\item[] Tokyo Metropolitan University, Hachioji, Tokyo, Japan
\end{itemize}
\end{itemize}

\end{document}